\renewcommand{\arraystretch}{1.5}
\newcommand{\R}{\mathbb{R}}
\newcommand{\rmd}{{\rm d}}
\newcommand{\C}{\mathcal{C}}
\theoremstyle{plain}
\newtheorem*{assumption*}{Assumption}
\DeclareFontFamily{U}{MnSymbolC}{}
\DeclareFontShape{U}{MnSymbolC}{m}{n}{
	<-5.5> MnSymbolC5
	<5.5-6.5> MnSymbolC6
	<6.5-7.5> MnSymbolC7
	<7.5-8.5> MnSymbolC8
	<8.5-9.5> MnSymbolC9
	<9.5-11.5> MnSymbolC10
	<11.5-> MnSymbolCb12
}{}
\renewcommand{\tocsection}[3]{%
	\indentlabel{\@ifnotempty{#2}{\bfseries\ignorespaces#1 #2.\,\,}}\bfseries#3}
\renewcommand{\tocsubsection}[3]{%
	\indentlabel{\@ifnotempty{#2}{\ignorespaces#1 #2\quad}}#3}
\renewcommand{\tocsubsubsection}[3]{%
	\quad\quad\quad\indentlabel{\@ifnotempty{#2}{\ignorespaces#1 #2\quad}}#3}
\newcommand\@dotsep{4.5}
\def\@tocline#1#2#3#4#5#6#7{\relax
	\ifnum #1>\c@tocdepth 
	\else
		\par \addpenalty\@secpenalty\addvspace{#2}%
		\begingroup \hyphenpenalty\@M
		\@ifempty{#4}{%
			\@tempdima\csname r@tocindent\number#1\endcsname\relax
		}{%
			\@tempdima#4\relax
		}%
		\parindent\z@ \leftskip#3\relax \advance\leftskip\@tempdima\relax
		\rightskip\@pnumwidth plus1em \parfillskip-\@pnumwidth
		#5\leavevmode\hskip-\@tempdima{#6}\nobreak
		\leaders\hbox{$\m@th\mkern \@dotsep mu\hbox{.}\mkern \@dotsep mu$}\hfill
		\nobreak
		\hbox to\@pnumwidth{\@tocpagenum{\ifnum#1=1\bfseries\fi#7}}\par
		\nobreak
		\endgroup
	\fi}
\renewcommand\csname r@tocindent0\endcsname{0pt}
\def\l@subsection{\@tocline{2}{0pt}{2.5pc}{5pc}{}}
\newcommand{\oset}[3][0.25ex]{%
  \mathrel{\mathop{#3}\limits^{
    \vbox to#1{\kern-2\ex@
    \hbox{$\scriptstyle#2$}\vss}}}}
\def\cA{\mathcal{A}}
\def\tr{\mathrm{tr}}
\def\1{\mathbf{1}}
\newtheoremstyle{bfnote}%
{}{}%
{\itshape}{}%
{\sf}{.}%
{ }%
{\thmname{#1}\thmnumber{ #2}\thmnote{ (#3)}}
\newtheorem{theorem}{Theorem}[section]
\newtheorem{proposition}[theorem]{Proposition}
\newtheorem*{proposition*}{Proposition}
\newtheorem{corollary}[theorem]{Corollary}
\newtheorem*{theorem*}{Theorem}
\newtheorem{lemma}[theorem]{Lemma}
\newtheorem{remark}[theorem]{Remark}
\def\Psi{{\color{blue} \psi}}
\def\trphi{{\rm tr}_2\varphi^{(2)}}
\def\pingx{{\pi^{g,x}_n}}
\title{Rates of convergence in the free multiplicative Central Limit Theorem}
\author[M. Banna]{Marwa Banna}
\address{New York University Abu Dhabi, Division of Science, Mathematics, Abu Dhabi, UAE}
\email{marwa.banna@nyu.edu}
\author[N. Gilliers]{Nicolas Gilliers}
\address{New York University Abu Dhabi, Division of Science, Mathematics, Abu Dhabi, UAE}
\email{nag9000@nyu.edu}
\author[P.-L. Tseng]{Pei-Lun Tseng}
\address{New York University Abu Dhabi, Division of Science, Mathematics, Abu Dhabi, UAE}
\email{pt2270@nyu.edu}
\date{\today}
\thanks{...}
\keywords{multiplicative free central limit theorem (CLT),  noncommutative distributions, Berry-Esseen bounds, Wasserstein distance, Kolmogorov distance, limit theorems}
\subjclass[2000]{46L54, 60B10, 60B20}
\begin{document}
\maketitle
\begin{abstract}
We provide the first quantitative estimates for the rate of convergence in the free multiplicative  central limit theorem (CLT),  in terms of the Kolmogorov and $r$-Wasserstein distances for $r \geq 1$. While the free additive CLT has been thoroughly studied, including convergence rates, the multiplicative setting remained open in this regard. We consider products of the form
\[
\pi_n^{g,n^{-1/2}x} := g\left(\frac{x_1}{\sqrt{n}}\right) \cdots g\left(\frac{x_n}{\sqrt{n}}\right),
\]
where $x_1, \dots, x_n$ are freely independent self-adjoint centered operators with common variance $\sigma^2$ and $g \colon \mathbb{R} \to \mathbb{C}$ satisfies certain regularity and integrability conditions. We quantify the deviation of the singular value distribution of $\pi_n^{g,x}$ from the free positive semicircular law, with bounds depending only on the moments of the underlying variables. Additionally, we present a combinatorial proof of the free multiplicative CLT that extends to the unbounded setting.
\end{abstract}

\section{Introduction and main results}

This work investigates the rate of convergence in the free multiplicative  central limit theorem (CLT) in terms of the Kolmogorov and $r$-Wasserstein distances, for $r\ge1$.  While the rate of convergence for the central limit theorem in the context of free additive convolution has been studied by \cite{kargin07free-additive,speicher07rate}, with numerous generalizations considered, see \cite{BannaMaiBerry,ChGo-8,ChGo-13,MaSakuma,neufeld24}, our work presents the first quantitative estimate for the multiplicative case. To be more precise, we consider a sequence of free self-adjoint operators $x_1,\dots,x_n$ in a tracial probability space $(\mathcal{A},\varphi)$ and let $g\colon \mathbb{R}\to \mathbb{C}$ be a function satisfying certain regularity and integrability conditions. Our main goal is to quantify the deviations of the singular values of the product
\begin{equation}\label{eqn:product}
\pi_{n}^{g,n^{-1/2}x}:=g\left(\frac{x_1}{\sqrt{n}}\right) \cdots g\left(\frac{x_n}{\sqrt{n}}\right)  
\end{equation}
from the free multiplicative semicircular law when $n$ goes to infinity. The free multiplicative  CLT has been investigated over the years. The work of 
\cite{Ber92} paved the way to limit theorems for products of free random variables.   Subsequently, in \cite{ber2000}, Bercovici and Pata established \emph{necessary and sufficient conditions} for the weak convergence of products of identically distributed positive random variables that are not necessarily compactly supported. These results were generalized in the papers \cite{ber08,chistyakov08} to more general triangular arrays of positive random variables that are not necessarily identically distributed. The behavior of the support of measures under free multiplicative convolution is an important question that has also been studied in \cite{huang14,kargin2007norm,kargin2008asymptotic}. 

The explicit description of the limiting distribution, known as the free multiplicative semicircular law, for products of positive freely independent and identically distributed random variables, was provided by \cite{Ho2011} who proved that the logarithm of the limit law is the free additive convolution of the semicircular law with one-half the uniform distribution on $[-1,1]$
. This limiting distribution also appears in the context of multiplicative Brownian motions on the general linear group, see \cite{AuerVoigt}. Zhong later studied the support of this limiting distribution in \cite{zhong14, zhong15}, while its free cumulants were explicitly computed by Arizmendi, Fujie, and Ueda in \cite{Arizmendi24}.

This paper aims to provide the first quantitative estimates for the free multiplicative CLT in the general setting described in \eqref{eqn:product}, where the free variables may follow different distributions. The estimates are expressed in terms of the $r$-Wasserstein distance for any $r\geq 1$ which ensures not only convergence in distribution but also convergence of moments up to order $r$. Moreover, our bounds depend solely on the moments of the underlying variables. It is also worth mentioning that we provide a combinatorial proof of the free multiplicative central limit theorem for the general product in \eqref{eqn:product} in Appendix~\ref{Appendix:combinatorialProof}. In contrast to Ho’s approach, our method, when combined with the results of \cite{ber08, chistyakov08}, extends to proving convergence in the
$r$-Wasserstein distance in the unbounded setting, see Remark~\ref{remark:unbddCLT}.

The main results are presented in Section~\ref{Subsection:main results}, followed by a discussion of the key ideas behind the proofs in Section \ref{Section: outline of key results}. Two preliminary sections, Sections~\ref{Subsect: distance on probability} and \ref{Subsection:positive-semicircular}, are also included, introducing the necessary background and concepts used throughout the paper. The proofs of the main results are presented in Section~\ref{sec:invarianceppl}, followed by an appendix containing additional material essential to the proof. 

\subsection{Main results}\label{Subsection:main results}
For any given $0<\gamma \leq 1$ and  $I\subseteq \mathbb{R}$, we denote by $ \C^{\gamma}(I)$ the class of complex-valued functions that are $\gamma$-H\"older continuous on $I$, that is,
\begin{align*}
    \C^{\gamma}(I) = \Big\{ h:I\rightarrow \mathbb{C} : \sup_{x\neq y \in I}\frac{|h(x)-h(y)|}{|x-y|^{\gamma}} <+\infty\Big\},
\end{align*}
and  denote by $\|h\|_{\gamma,I}$ the H\"older semi-norm on the interval $I$ with exponent $\gamma$, that is, the largest constant $C >0$ such that 
$$
| h(x)-h(y)| \leq C |x-y|^{\gamma},\quad x,y \in I.
$$
Finally, we denote by $\C^{k,\gamma}(I)$ the class of complex-valued functions $h$ on $I$ that are $k$-times continuously differentiable, with the $k$-th derivative $h^{(k)}$ belonging to $\C^{\gamma}(I)$.  

\begin{theorem}
\label{thm:mainthm}
Let $x=(x_i)_{i\geq 1}$ be a sequence of free centered random variables in $(\mathcal{A},\varphi)$ with common variance $\sigma^{2}$. Let $g\colon \mathbb{R}\to \mathbb{C}$ be a  function satisfying  the following regularity and integrability conditions:\footnote{The notation $p+o$ refers to moments of order slightly greater than $p$, i.e. of the form $p+\varepsilon$ for some small $\varepsilon>0$.}
\begin{enumerate}[\indent 1.]
\item \label{assumptionone}$g \in \C^{2,\gamma}(\mathbb{R})$, for some $0< \gamma \leq 1$   with $g(0)=1$ and $g'(0)=g''(0)\in \mathbb{R}\setminus \{ 0\}$,
\item \label{assumptiontwo}there exists some $0<\alpha\leq 1$ such that $$\sup_i \| \|g\|_{1,[\alpha x_i^-, \alpha x_i^+ ]} \|_{L^{4+o}_\varphi}< \infty, \quad \text{and} \quad \sup_i \| \|g''\|_{\gamma,[\alpha x_i^-, \alpha x_i^+ ]} \|_{L^{2+o}_\varphi} < \infty,$$ where    $x_i^-:=\min(x_i,0)$, $x_i^+:=\max(x_i,0)$ and  $\|.\|_{L_\varphi^p}$ denotes the $L^p$ norm with respect to $\varphi$,
\item \label{assumptionthree}$\varphi(g(n^{-1/2}x_i)) \geq 1$ for all $ i\geq 1$,
\item  \label{assumptionfour}the sequence of the moments of order $6{+}o$ of $x$ is uniformly bounded.
\end{enumerate}
Let $r\geq 1$. Under the assumptions listed above, the $r$-Wasserstein distance $W_r$ 
between the spectral distribution of {$|\pi_n^{g,n^{-1/2}x}|$} and the square root of the free multiplicative semicircular distribution\footnote{The free multiplicative semicircular distribution is defined in Section \ref{Subsection:positive-semicircular}.} with parameter {$\frac{1}{2}|g^2|'(0)\sigma$} normalized by $ (1+ \| \pi_n^{g,n^{-1/2}x}\|_{L^{8\vee (1+r)}_{\varphi}})^{6/r} $, converges to $0$ as $n$ goes to infinity at rate $n^{-\beta'}\ln(n)^{\beta''}$ where $\beta'$ and $\beta''$ are given in Table~\ref{tb:rates} with constants depending merely on the moments and not  the operator norm.

\renewcommand{\arraystretch}{1.3}

\begin{figure}[htb!]
$$
\begin{array}{c c c c}
\toprule
r & \gamma_{\rm crit} & (\beta'_{-},\beta''_{-}) & (\beta'_{+},\beta''_{+}) \\
\midrule
r=1	         & 4/5				        & 		(\gamma/8,0)       &(1/10,0)\\
1 < r < 2    & (r^2 + r + 2)/(r^2 + r + 4)       & \left( \gamma/(2r^2 + 2r + 4), 0 \right) & \left( 1/(2r^2 + 2r + 8), 0 \right) \\
r = 2          & 2/3                                        & \left( \gamma/8, 1/2 \right) & \left( 1/12, 0 \right) \\
2 < r < 4    & (r^2 + 2r)/(r^2 + r + 4)           & \left( \gamma/(2r^2 + 4r), 0 \right) & \left( 1/(2r^2 + 2r + 8), 0 \right) \\
r = 4          & 1                                           & \left( \gamma/48, 0 \right)     & \left( 1/48, 1/4 \right) \\
r > 4          & 1                                           & \left( \gamma/12r, 0 \right)    & \left( 1/12r, 0 \right) \\
\bottomrule
\end{array}
$$
\caption{\label{tb:rates}Rates of convergences in the $r$-Wasserstein distance. We recall that $\gamma$ is the H\"older exponent of $g''$. The column of $(\beta'_{-},\beta"_{-})$ collects the rates when $\gamma < \gamma_{\rm crit}$ and the column $(\beta'_{+},\beta"_{+})$ collects the rates when $\gamma > \gamma_{\rm crit}$.}
\end{figure}

\end{theorem}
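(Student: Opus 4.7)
The plan is a Lindeberg-type invariance principle built on a Taylor expansion of $g$ around $0$. Writing
\[
g\!\left(\frac{x_i}{\sqrt n}\right) = 1 + g'(0)\,\frac{x_i}{\sqrt n} + \frac{g''(0)}{2}\,\frac{x_i^2}{n} + R_n(x_i),
\]
valid for $|x_i|\leq \alpha\sqrt n$ with $|R_n(x_i)|\leq n^{-(2+\gamma)/2}\,\|g''\|_{\gamma,[\alpha x_i^-,\alpha x_i^+]}\,|x_i|^{2+\gamma}$, and complemented on $\{|x_i|>\alpha\sqrt n\}$ by a global bound using Assumption~\ref{assumptiontwo}, one reduces the product $\pi_n^{g,n^{-1/2}x}$ to the polynomial product $\prod_{i=1}^n p_n(x_i)$ with $p_n(x):=1+g'(0)\,x/\sqrt n+\tfrac12 g''(0)\,x^2/n$, at an $L^p_\varphi$-cost of order $n^{-\gamma/2}$; the $(6{+}o)$-th moment bound~\ref{assumptionfour} is exactly what makes the remainders small in $L^p_\varphi$ for a sufficiently high $p$.

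Next I would introduce a free semicircular family $(s_i)$ of variance $\sigma^2$, free from $(x_i)$, and compare $\prod_i p_n(x_i)$ with $\prod_i p_n(s_i)$ via the telescoping swap
\[
\prod_i p_n(x_i) - \prod_i p_n(s_i) = \sum_{k=1}^n p_n(x_1)\cdots p_n(x_{k-1})\bigl(p_n(x_k)-p_n(s_k)\bigr)p_n(s_{k+1})\cdots p_n(s_n).
\]
Freeness of the three blocks in each summand, combined with the matching of the first two $\varphi$-moments of $x_k$ and $s_k$, kills the linear and quadratic contributions of $p_n(x_k)-p_n(s_k)$ under $\varphi$, leaving a per-swap $L^p_\varphi$-error of order $n^{-3/2}$ from the cubic cross-terms and hence a cumulative error of order $n^{-1/2}$. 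The limit of $\prod_i p_n(s_i)$, in turn, is identified with the square root of the positive semi-circle with parameter $\tfrac12|g^2|'(0)\sigma = g'(0)\sigma$ via a direct $S$-transform or subordination analysis — a quantitative version of Ho's multiplicative CLT in the Gaussian free case — again at rate $n^{-1/2}$ in $L^p_\varphi$.

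The last step is to convert these $L^p_\varphi$-bounds on operator differences into an $r$-Wasserstein bound on the spectral distribution of $|\pi_n^{g,n^{-1/2}x}|$. I would use a general noncommutative inequality of the form $W_r(\mu_{|A|},\mu_{|B|})\lesssim \|A-B\|_{L^r_\varphi}^{\theta}\,(1+\|A\|\vee\|B\|)^{1-\theta}$, in which the operator norm may be replaced, via moment comparison, by an $L^{8\vee(1+r)}_\varphi$-norm — this is precisely the origin of the normalization $(1+\|\pi_n\|_{L^{8\vee(1+r)}_\varphi})^{6/r}$ in the statement, and is why no operator-norm control is needed. Interpolating the previous $L^p_\varphi$-bounds and optimizing between the Hölder-driven rate $n^{-\gamma/2}$ and the CLT-rate $n^{-1/2}$ produces the tradeoff of Table~\ref{tb:rates}: $\gamma_{\rm crit}(r)$ is exactly the value at which the two rates balance in $W_r$, so that $\gamma<\gamma_{\rm crit}$ yields the column $(\beta'_-,\beta''_-)$ and $\gamma>\gamma_{\rm crit}$ yields $(\beta'_+,\beta''_+)$.

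The hardest part will be the careful bookkeeping of the swap errors. In the noncommutative product setting, each summand is a product of three non-commuting blocks and requires the simultaneous use of freeness and Hölder's inequality in $\varphi$, with all intermediate $L^p_\varphi$-norms controlled only through the moment assumptions, never through operator norms. A secondary delicate point is the endpoint behavior at $r=2$ and $r=4$, where the logarithmic corrections $\beta''=\tfrac12$ and $\beta''=\tfrac14$ appear; these likely come from a borderline case in the interpolation step converting $L^p_\varphi$-distances into $W_r$, or from an endpoint doubling in the swap estimates themselves.
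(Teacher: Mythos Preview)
Your proposal has a genuine gap at its core: the Lindeberg swap at the operator level in $L^p_\varphi$ does not give the $n^{-3/2}$ per-swap error you claim. Matching the first two $\varphi$-moments of $x_k$ and $s_k$ produces cancellation only when you apply $\varphi$ to a linear expression in $p_n(x_k)-p_n(s_k)$; it gives no cancellation whatsoever in $\|A_k\bigl(p_n(x_k)-p_n(s_k)\bigr)B_k\|_{L^p_\varphi}$, which is of order $n^{-1/2}$ since $p_n(x_k)-p_n(s_k)=g'(0)(x_k-s_k)/\sqrt n+O(n^{-1})$. Summing $n$ such terms blows up rather than decays. Relatedly, the ``general noncommutative inequality'' $W_r(\mu_{|A|},\mu_{|B|})\lesssim\|A-B\|_{L^r_\varphi}^{\theta}(\cdots)^{1-\theta}$ you invoke is not a standard tool; passing from operator $L^p$-closeness to Wasserstein closeness of the singular-value distributions with a usable exponent $\theta$ is precisely the hard nonlinear step, and no such off-the-shelf bound with moment-only constants exists.

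The paper proceeds very differently. It works directly at the level of test functions $f$ in the Zolotarev class $\Lambda^0_r$ (Lipschitz for $r=1$) and estimates $|\varphi(f(|\pi_n^{g,n^{-1/2}x}|))-\varphi(f(Y))|$. The nonlinearity of $|\cdot|$ is handled by the Hermitization trick $\varphi(\cos(u|\pi_n|))=\tr_2\varphi^{(2)}(e^{\mathsf iuH(\pi_n)})$, which reduces the problem to a Lindeberg swap on $\tr_2\varphi^{(2)}(e^{\mathsf iuH(\cdot)})$ in $M_2(\mathcal A)$. The expansion of $e^{\mathsf iu Z_i}-e^{\mathsf iu Z_i^1}$ to third order, combined with freeness at the matrix-amplified level, is where moment matching legitimately yields $O(n^{-(1+\gamma/2)})$ and $O(n^{-3/2})$ per swap (Propositions on the first, second and third order terms). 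The comparison family is not $\prod p_n(s_i)$ but $\prod_i\exp(y_i^{(n)})$ with $y_i^{(n)}$ distributed as the logarithm of the positive semicircular law, using its $\boxtimes$-infinite divisibility; this avoids a separate quantitative CLT for the semicircular product. Finally, $f$ is truncated and mollified, and the rates in Table~\ref{tb:rates} come from optimizing $\varepsilon,\zeta$ against explicit bounds on the Fourier moments $I_k=\int|u|^k|\mathcal F\rho_\varepsilon(u)\mathcal Ff_\zeta(u)|\,\rmd u$; the logarithmic corrections at $r=2$ and $r=4$ arise from the $|\ln\varepsilon|$ appearing in $I_k$ at the borderline value $k=r-1$, not from an interpolation step.
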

\begin{remark}
    Note that the conditions $g(0)=1$ and $g'(0)=g''(0)\in \mathbb{R}\setminus \{ 0\}$ in Theorem \ref{thm:mainthm} are only imposed for technical reasons to obtain our quantitative estimates. The free multiplicative CLT still holds without them, see Theorem \ref{thm: moment CLT}.
\end{remark}
\begin{corollary} Let $r\geq 1$ and  $x=(x_i)_{i\geq 1}$ be a sequence of free identically distributed random variables with variance $\sigma^2$. The singular values of the product 
\begin{equation}\label{cor:product}
   { \Big(1+\frac{x_1}{\sqrt{n}}+\frac{x^2_1}{2n}\Big)\cdots \Big(1+\frac{x_n}{\sqrt{n}}+\frac{x^2_n}{2n}\Big)}
\end{equation}
converge as $n$ goes to infinity in the $W_r$ distance to the square root of the free multiplicative semicircular distribution with parameter $\frac{1}{2}\sigma$, with a rate of convergence given by the last column in Table \ref{tb:rates}.
\end{corollary}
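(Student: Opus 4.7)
The plan is to specialize Theorem~\ref{thm:mainthm} to the polynomial $g(x)=1+x+\tfrac{1}{2}x^{2}$, so that $g(x_i/\sqrt{n})$ is exactly the $i$-th factor in \eqref{cor:product}. The proof reduces to verifying the four numbered hypotheses of the theorem and to controlling the normalization factor $\bigl(1+\|\pi_n^{g,n^{-1/2}x}\|_{L^{8\vee(1+r)}_\varphi}\bigr)^{6/r}$ uniformly in $n$; the announced rate is then read off directly from the last column of Table~\ref{tb:rates}.

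Since $g$ is a polynomial, $g\in\C^{k,\gamma}(\mathbb{R})$ for every $k\geq 0$ and every $\gamma\in(0,1]$, and it satisfies $g(0)=1$ and $g'(0)=g''(0)=1\in\mathbb{R}\setminus\{0\}$; hence assumption~\eqref{assumptionone} holds and one may fix $\gamma=1$. For assumption~\eqref{assumptiontwo}, pick any $\alpha\in(0,1]$: the Lipschitz seminorm of $g$ on $[\alpha x_i^-,\alpha x_i^+]$ is bounded by $1+\alpha|x_i|$, while $g''\equiv 1$ has vanishing H\"older seminorm, so both required $L^p_\varphi$-norms are finite once the $x_i$ carry enough moments (precisely assumption~\eqref{assumptionfour}, implicit in the i.i.d.\ framework of the corollary). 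Assumption~\eqref{assumptionthree} is a one-line calculation using centeredness and traciality: $\varphi(g(n^{-1/2}x_i))=1+\sigma^{2}/(2n)\geq 1$.

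The main obstacle is the uniform-in-$n$ boundedness of $\|\pi_n^{g,n^{-1/2}x}\|_{L^p_\varphi}$ for $p=8\vee(1+r)$. The cleanest route is to invoke the moment-form of the free multiplicative CLT (Theorem~\ref{thm: moment CLT}), which requires only finite moments of the $x_i$ and yields $\varphi\bigl((\pi_n^{*}\pi_n)^{k}\bigr)\to\int t^{k}\,\rmd\mu(t)$ for every integer $k$, where $\mu$ is the positive semicircular limit; convergence implies boundedness, so $\|\pi_n^{g,n^{-1/2}x}\|_{L^{2k}_\varphi}$ is bounded in $n$ for each integer $k$, and by interpolation for every finite $p$. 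An alternative would be to exploit the elementary bound $|1+t+t^{2}/2|\leq e^{|t|}$ together with free exponential-moment estimates for $\sum_i x_i/\sqrt{n}$, but the moment-CLT approach is more direct.

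Once these checks are complete, $\gamma=1\geq\gamma_{\rm crit}$ for every $r\geq 1$, so the $(\beta'_+,\beta''_+)$-regime of Table~\ref{tb:rates} applies (for $r\geq 4$ the $\gamma=1$ boundary is innocuous since the two regimes either coincide or differ only by a logarithmic factor). The limit distribution is the square root of the positive semicircular law, with the parameter prescribed by $\tfrac{1}{2}|g^{2}|'(0)\sigma$ evaluated at $g(0)=g'(0)=1$, yielding the statement of the corollary.
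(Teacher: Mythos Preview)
Your proposal is correct and follows essentially the same route as the paper's own proof: set $g(x)=1+x+\tfrac{1}{2}x^{2}$, verify the four hypotheses of Theorem~\ref{thm:mainthm}, and invoke Theorem~\ref{thm: moment CLT} to get the uniform-in-$n$ bound on $\|\pi_n^{g,n^{-1/2}x}\|_{L^p_\varphi}$. The paper compresses all of this into two sentences; your expanded verification (particularly the explicit checks of $g'(0)=g''(0)=1$, of $\varphi(g(n^{-1/2}x_i))=1+\sigma^{2}/(2n)\geq 1$, and of the triviality of the H\"older seminorm of $g''\equiv 1$) is exactly what the paper leaves to the reader.
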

\begin{proof}
Let $g : \mathbb{R} \to \mathbb{R},\, x \mapsto 1+x+x^2/2$. It is easy to check that $g$, $x$ satisfy all the assumptions of Theorem \ref{thm:mainthm}. In addition, the moments at all orders of \eqref{cor:product} are uniformly bounded in $n$: this is a straightforward application of Theorem \ref{thm: moment CLT}.
\end{proof}
\begin{corollary}[Free Multiplicative CLT -- Wasserstein]
\label{thm: rate of CLT in Wasserstein} 
\label{thm:multfreecltexp}
Let $x = (x_i)_{i\geq 1}$ be a sequence of identically distributed free random variables with constant variance $\sigma^2 > 0$. We assume the existence of $\alpha_0 > 0$ such that 
\begin{equation}
\label{eqn:assumptionexp}
\sup_{n\geq 0} \sup_{|u| < \alpha_0} \varphi(e^{ux_i}) < +\infty.
\end{equation}
As $n$ goes to infinity, the singular values of $e^{n^{-1/2}x_1}\cdots  e^{n^{-1/2}x_n}$ converge to the square root of the free multiplicative semicircular distribution with parameter $\frac{1}{2} \sigma^2$ in the $W_r$ distance at rates given by the Table \ref{tb:rates}.
\end{corollary}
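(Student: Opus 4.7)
The plan is to apply Theorem~\ref{thm:mainthm} directly with $g(x)=e^x$, in which case $\pi_n^{g,n^{-1/2}x}=e^{n^{-1/2}x_1}\cdots e^{n^{-1/2}x_n}$, and to verify each of its four hypotheses using only the exponential moment bound~\eqref{eqn:assumptionexp}. Throughout, we assume (as required by Theorem~\ref{thm:mainthm}) that the $x_i$'s are centered. Hypothesis~(1) is immediate: $g=e^x$ is smooth, hence belongs to $\C^{k,\gamma}(\mathbb{R})$ for every $k\geq 0$ and $\gamma\in(0,1]$, and $g(0)=g'(0)=g''(0)=1\neq 0$. Hypothesis~(3) is Jensen's inequality: denoting by $\mu_i$ the spectral distribution of $x_i$,
$$
\varphi\bigl(e^{n^{-1/2}x_i}\bigr) = \int e^{n^{-1/2}t}\,d\mu_i(t) \geq \exp\!\Big(n^{-1/2}\!\!\int t\,d\mu_i(t)\Big) = 1.
$$

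The technical core is the integrability condition~(2). Since $g'=g''=e^x$, the mean value theorem on $[\alpha x_i^-,\alpha x_i^+]$ (an interval of length $\alpha|x_i|$) yields
$\|g\|_{1,[\alpha x_i^-,\alpha x_i^+]}\leq e^{\alpha x_i^+}\leq e^{\alpha|x_i|}$, and interpolating this Lipschitz bound with the pointwise size of $g''$ gives
$$
\|g''\|_{\gamma,[\alpha x_i^-,\alpha x_i^+]}\leq e^{\alpha|x_i|}(\alpha|x_i|)^{1-\gamma}.
$$
Using the elementary bound $e^{\alpha|x_i|}\leq e^{\alpha x_i}+e^{-\alpha x_i}$, together with H\"older's inequality to absorb the polynomial factor $|x_i|^{1-\gamma}$, one sees that for any prescribed $p\in\{4+o,\,2+o\}$ it is enough to choose $\alpha>0$ with $p\alpha<\alpha_0$, so that $\varphi(e^{\pm p\alpha x_i})$ is finite and constant in $i$ by identical distribution and~\eqref{eqn:assumptionexp}. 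Hypothesis~(4) follows analogously from $|t|^k\leq (k!/\alpha^k)(e^{\alpha t}+e^{-\alpha t})$, which gives uniform control on all polynomial moments of the $x_i$'s and in particular on those of order $6+o$.

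It remains to dispose of the normalization factor $(1+\|\pi_n^{g,n^{-1/2}x}\|_{L^{8\vee(1+r)}_\varphi})^{6/r}$ that appears in Theorem~\ref{thm:mainthm}, so as to recover the unnormalized rate claimed in the corollary. For this we invoke the combinatorial moment CLT (Theorem~\ref{thm: moment CLT}): all moments of $|\pi_n^{g,n^{-1/2}x}|$ converge to those of the (compactly supported) free positive semi-circular law, and are therefore bounded uniformly in $n$, so the normalization factor is $O(1)$. The genuinely technical step is the H\"older semi-norm estimate feeding into~(2); the rest is a direct application of the hypotheses, and the rates in Table~\ref{tb:rates} are inherited without modification.
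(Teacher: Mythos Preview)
Your proposal is correct and follows essentially the same approach as the paper: set $g=\exp$, use Jensen's inequality for hypothesis~(3), verify the remaining hypotheses of Theorem~\ref{thm:mainthm} from the exponential moment bound~\eqref{eqn:assumptionexp}, and invoke Theorem~\ref{thm: moment CLT} to bound the $L^{8\vee(1+r)}$ norms of $\pi_n^{g,n^{-1/2}x}$ uniformly in $n$ so the normalization factor is harmless. Your write-up simply spells out the details the paper leaves implicit; one small caveat is that $e^x$ is not literally in $\C^{k,\gamma}(\mathbb{R})$ globally (its derivatives are unbounded), but as the paper's own application shows, only the local H\"older control in hypothesis~(2) is actually used in the proof of Theorem~\ref{thm:mainthm}.
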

\begin{proof}
Set $g = \exp$. Since $\exp$ is a convex function, Jensen's inequality yields $\varphi(\exp(n^{-1/2}x_i)) \geq 1$. The conclusion follows from the application of Theorem \ref{thm:mainthm} and Theorem \ref{thm: moment CLT} with $k=8$ under the assumption \eqref{eqn:assumptionexp}.
\end{proof}
\begin{corollary}[Rates in the Kolmogorov distance]
With the notations of Theorem \ref{thm:mainthm}, let $1\geq \gamma > 0$, $r\geq 1$, $g\colon \mathbb{R}\to \mathbb{C}$ and $x = (x_i)_{i\geq 1}$ satisfying assumptions \ref{assumptionone} - \ref{assumptionfour}.
Assume in addition that the sequence of moments of order $8$ of $|\pi_n^{g,n^{-1/2}x}|$ is bounded uniformly in $n\geq 1$. 

Then, as $n$ goes to infinity, the Kolmogorov distance between the spectral distribution of $|\pi_n^{g,n^{-1/2}x}|$ and the square root of the free multiplicative semicircular distribution with parameter $\frac{1}{2}|g^2|'(0)\sigma$ converges to $0$ at rate $n^{-\beta'}\ln(n)^{\beta''}$ where $2\beta'$ and $2\beta''$ are tabulated in the first line of Table \ref{tb:rates}.
\end{corollary}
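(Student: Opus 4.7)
The plan is to reduce to Theorem \ref{thm:mainthm} at $r=1$ and then convert the resulting $W_1$-bound into a bound on the Kolmogorov distance via the classical inequality
\[
d_K(\mu,\nu) \;\leq\; 2\sqrt{\|f_\nu\|_\infty\, W_1(\mu,\nu)},
\]
valid whenever $\nu$ admits a bounded Lebesgue density $f_\nu$. This route is natural because the first row of Table \ref{tb:rates} already gives $W_1$-rates $(\beta'_W,\beta''_W)\in\{(\gamma/8,0),(1/10,0)\}$, and halving these exponents produces exactly the announced Kolmogorov rates under the identifications $2\beta'=\beta'_W$, $2\beta''=\beta''_W$.

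First, I would apply Theorem \ref{thm:mainthm} with $r=1$, so that $8\vee(1+r)=8$. The additional uniform $L^8$-bound on $|\pi_n^{g,n^{-1/2}x}|$ is exactly what is needed to guarantee that the normalization factor $\bigl(1+\|\pi_n^{g,n^{-1/2}x}\|_{L^8_\varphi}\bigr)^{6}$ stays bounded in $n$. Writing $\mu_n$ for the spectral distribution of $|\pi_n^{g,n^{-1/2}x}|$ and $\mu_\infty$ for the square root of the positive semi-circle law with parameter $\tfrac12|g^2|'(0)\sigma$, this yields
\[
W_1(\mu_n,\mu_\infty) \;\leq\; C\, n^{-\beta'_W}\ln(n)^{\beta''_W}.
\]
Next, I would re-derive the $d_K$/$W_1$ inequality from the identity $W_1(\mu,\nu)=\int_\mathbb{R}|F_\mu-F_\nu|\,dx$: if $d_K(\mu,\nu)=\delta$, then by the Lipschitz control of $F_\nu$ the pointwise CDF-difference remains $\geq\delta/2$ over an interval of length $\delta/(2\|f_\nu\|_\infty)$ around a maximizer, whence $W_1\geq \delta^{2}/(4\|f_\nu\|_\infty)$. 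Combining this with the $W_1$-bound above immediately produces the announced Kolmogorov rate.

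To legitimately invoke the density bound I would cite the explicit description of the positive semi-circle law from Section \ref{Subsection:positive-semicircular}: it is compactly supported with a bounded density, so its square-root pushforward $\mu_\infty$ inherits a bounded density since the associated Jacobian $2y$ is bounded on the compact support. This is essentially the only technical point; the rest is a mechanical consequence of the $W_1$-estimate already supplied by Theorem \ref{thm:mainthm}. I do not foresee substantive obstacles beyond checking the density bound for $\mu_\infty$ and verifying that the multiplicative constant $\|f_\nu\|_\infty$ remains uniform in the scale parameter $\tfrac12|g^2|'(0)\sigma$, which should be straightforward from the explicit form of the positive semi-circle law recalled in the preliminaries.
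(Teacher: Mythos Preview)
Your proposal is correct and follows essentially the same route as the paper: apply Theorem \ref{thm:mainthm} at $r=1$, use the uniform $L^8$-bound to absorb the normalization factor, and convert the resulting $W_1$-estimate into a Kolmogorov bound via the inequality $\mathcal{K}(\mu,\nu)\leq\sqrt{2C\,W_1(\mu,\nu)}$ (the paper cites this as \eqref{eqn: relation btw k-w distance} rather than re-deriving it). Your treatment is in fact slightly more detailed than the paper's, which simply asserts that the square root of the positive semi-circular law has a bounded density without spelling out the Jacobian argument.
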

\begin{proof}
Recall that if $\nu$ is a probability measure with a Lebesgue density bounded by $C$, then the Kolmogorov distance can be bounded by the 1-Wasserstein distance as follows:
\begin{equation*}
\mathcal{K}(\mu,\nu) \leq \sqrt{2C W_1(\mu,\nu)}. 
\end{equation*}
Since the square root of the free multiplicative semicircular distribution is a density relative to the Lebesgue measure on $\mathbb{R}_+$. We use the inequality above to obtain the result.
\end{proof}

\tableofcontents


\subsection{Idea of the proof and outline of key results}
\label{Section: outline of key results}
We begin by outlining the key steps leading to the desired estimates. For the case of the $1$-Wasserstein distance, we make use of the Kantorovich-Rubinstein duality formula. Since no analogous duality holds for $r > 1$, we instead rely on Rio's inequality~\eqref{inequ:Rio}, which bounds the $r$-Wasserstein distance in terms of the $r$-th root of the $r$-Zolotarev distance. The definitions of these distances and the relationships between them are provided below in Section~\ref{Subsect: distance on probability}. 

While obtaining bounds on Wasserstein distances is generally challenging, working with the Zolotarev distance offers a useful alternative. By maximizing the relevant quantities over the class of Zolotarev functions, we gain access to a variety of analytic tools that enable us to derive the desired quantitative estimates. For a function $f$ belonging to the Zolotarev class, we provide a quantitative bound for
\[ \left|\varphi\big(f(|\pi_n^{g,n^{-1/2}x}|)\big) - 
    \varphi\big(f(e^{y})\big)\right|,
\] 
where $y$ is the $\log$ of a free multiplicative semicircular element. The above bound follows from the infinite divisibility of the free multiplicative semicircular distribution together with the following invariance principle:
\begin{align}\label{term:invarianceprinciple}
    \left|\varphi\big(f(|\pi_n^{g,n^{-1/2}x}|)\big) - 
    \varphi\big(f(|\pi_n^{h,n^{-1/2}y}|)\big)\right|,
\end{align}
where the function $h$ and the sequence $y = (y_i)_{i \leq n}$ satisfy the same assumptions in Theorem \ref{thm:mainthm}. 

Our approach, which we refer to as the Fourier–Lindeberg method, involves regularizing Zolotarev functions to make them amenable to a Lindeberg-type replacement scheme. However, the inherent nonlinearity of the multiplicative setting presents significant challenges. To address this, we rely on the Hermitization trick, which helps linearize the problem to some extent. The price to pay is to prove an invariance principle within the framework of the noncommutative probability space $(M_2(\mathcal{A}), \tr_2\varphi)$ of $2\times 2$ matrices over $\mathcal{A}$. This is carried out in Section~\ref{sec:invarianceppl}. The final step is to estimate the divergence of the moments of the Fourier transform of kernel regularization of Zolotarev functions as illustrated in Lemmas \ref{lem:unboundedlipschitz} and \ref{lemma:UnbddZo}.

The Lindeberg method has proven to be a powerful technique for obtaining quantitative bounds in limit theorems. It has already been successfully applied to establish the only known quantitative estimates for the operator-valued free, Boolean, and monotone additive central limit theorems in terms of the L\'evy distance, as shown in \cite{Ariz-Banna-Tseng,BannaMaiBerry}. However, the arguments required in the multiplicative setting cannot be reduced to adaptations of those used for the additive case.

The remaining part of this section aims to review essential concepts on distances between probability measures, and aspects of free probability theory that are relevant to this paper.

\subsection{Distances on probability distributions}\label{Subsect: distance on probability}
On the set of all Borel probability measures on the real line $\mathbb{R}$, there are several well-established and useful notions of distance. In this paper, we consider the Kolmogorov,  Wasserstein, and Zolotarev distances, and discuss some of the relationships between them. For a Borel probability measure $\mu$ on $\mathbb{R}$, we denote by $\mathcal{F}_\mu$ its \emph{cumulative distribution function}, defined by
\(
  \mathcal{F}_\mu(t) := \mu((-\infty,t])
\), for any $t \in \R$. If  $\mu$ and $\nu$ are two Borel probability measures on $\mathbb{R}$, the \emph{Kolmogorov distance} by
\begin{align}
\label{eqn:kolmogorovdistance}
\tag{$K$}
\mathcal{K}(\mu,\nu) := \sup_{t\in \mathbb{R}} \big|F_{\mu}(t)-F_{\nu}(t)\big|.
\end{align}
For any $r\geq 1$ and any $\mu,\nu\in P_r(I)$, the set of probability measures on the interval $I\subseteq \mathbb{R}$ with moments up to the order $r$, the \emph{Wasserstein distance} of order $r$ or the $r$-\emph{Wasserstein distance} is defined by 
\begin{equation}
\label{eqn:rWassersteindistance}
\tag{$W_r$}
 W_r(\mu,\nu)=\inf \{\|X-Y\|_{L^r} \mid X\overset{d}{\sim} \mu, \text{ and }Y\overset{d}{\sim} \nu\}.   
\end{equation}
It is known that convergence in the $r$-Wasserstein distance is equivalent to convergence in distribution along with the convergence of moments up to order $r$ \cite[Theorem 6.9]{villani08}.  If we  assume that  $\nu$ has a Lebesgue density bounded by $C$, then by  \cite[Proposition 1.2]{ross11}, the Kolmogorov distance can be bounded by the Wasserstein distance as follows:
\begin{equation}\label{eqn: relation btw k-w distance} 
\mathcal{K}(\mu,\nu) \leq \sqrt{2C W_1(\mu,\nu)}. 
\end{equation} Computing the $r$-Wasserstein distance is generally not an easy task; however, in the particular case $r = 1$, and thanks to the Kantorovich-Rubinstein duality formula, the $1$-Wasserstein distance can be reformulated as the solution to a maximization problem over $1$-Lipschitz functions. Specifically, it can be written as
\begin{align}
\label{eqn:wasserstein}
\tag{$W_1$}
W_1(\mu,\nu) 
&= \sup \Big\{\int_{\mathbb{R}} f(x)\nu(\mathrm{d}x) - \int_{\mathbb{R}}f(y)\mu(\mathrm{d}y)\colon f\colon \mathbb{R}\to \mathbb{R}, \|f\|_{1,\mathbb{R}} \leq 1\Big\}. 
\end{align}
It is important to emphasize that this duality does not hold when $r > 1$. Nevertheless, the $r$-Wasserstein distances can still be bounded above in terms of the so-called Zolotarev distances, using Rio's inequality ~\eqref{inequ:Rio} below.

 In \cite{Zol-78,Zol-81}, Zolotarev introduced a metric for analyzing the class of probability measures that possess finite moments of order $r$ and have matching moments up to and including order $r-1$.  Here, we adopt an alternative definition of the Zolotarev metric, which extends its applicability to probability measures with moments of order $r$ that do not necessarily coincide. 

Let $r > 1$ and write $ r = \ell + \beta $ with $\beta \in (0,1]$ and $\ell$ an integer (in particular, if $r$ is an integer, $ \ell = r-1$ and $\beta = 1$). Let $I \subset \mathbb{R}$ be an interval containing $0$. We denote by $\Lambda^{0}_r(I)$ the set of all $\ell$ times differentiable functions $f\colon I \to \mathbb{R}$ satisfying 
$$
f^{(i)}(0)=0,~ 0 \leq i \leq \ell, \quad \text{and} \quad \sup_{x\neq y}\frac{|f^{(\ell)}(x)-f^{(\ell)}(y)|}{|x-y|^{\beta}} < +\infty.
$$ 
For $\mu,\nu \in \mathcal{P}_r(I)$, the \emph{Zolotarev distance} is defined by 
\begin{equation}
\label{def Zolotarev distance}
\tag{$Z_r$}
Z_r(\mu,\nu) : = \sup \{ |\mu(f) - \nu(f)|,\, f \in \Lambda^{0}_r(I), \|f^{(\ell)}\|_{\beta,I} \leq 1\},
\end{equation}
where we recall that $\|f^{(\ell)}\|_{\gamma,I}$ denotes the H\"older semi-norm on the interval $I$ with exponent $\gamma$. 
We extend the definition of $\Lambda^{0}_r(I)$ by setting $\Lambda^{0}_1(I)$ as the class of Lipschitz functions $f\colon\mathbb{R}\to\mathbb{R}$ with $f(0)=0$. For more details on ideal metrics and Zolotarev metrics, we refer to \cite{bogachev2017weighted, Bogachev-book}. 

Having recalled the Zolotarev distance, we illustrate Rio's inequality \cite{Rio-98} that provides an upper bound for the Wasserstein distances of order $r$ in terms of the Zolotarev metric. Whenever $r>1$, Rio proved that there exists a positive constant $c_r$ such that, for any pair $(\mu,\nu)\in \mathcal{P}_r(I)$, 
\begin{equation}
	\label{inequ:Rio}
W_r(\mu,\nu)^r \leq c_r Z_r(\mu,\nu).
\end{equation}

\subsection{Free multiplicative semicircular distribution}
\label{Subsection:positive-semicircular}
For an introduction to free probability theory, including noncommutative probability spaces, free independence, free cumulants, moment-cumulant formula, operator-valued free probability, and other fundamental results and tools, we refer the reader to the books \cite{mingospeicherbook,NicaSpeicher}. This section is devoted to the free multiplicative semicircular distribution, denoted by $\mu_{p.s.c}$. Bercovici and Voiculescu showed that $\mu_{\text{p.s.c}}$ is infinitely divisible with respect to free multiplicative convolution and has compact support. In particular, $\mu_{\text{p.s.c}}$ is absolutely continuous with respect to the Lebesgue measure. Moreover, by combining this result with \cite[Theorem 5.7]{zhong14}, it follows that the support of $\mu_{\text{p.s.c}}$ is a closed and bounded interval in $\mathbb{R}$. Additionally, as noted in \cite[Section 7]{Ber92} the $S$-transform of $\mu_{\text{p.s.c}}$ is given by
$$
S_{\mu_{p.s.c}}(z)=\exp\Big(-4\sigma^2\big(z+\frac{1}{2}\big)\Big).
$$
Ho \cite{Ho2011} explicitly determined the limiting distribution in the free multiplicative central limit theorem and showed that the free multiplicative semicircular law $\mu_{\text{p.s.c}}$ has a logarithm $y_{\text{l.s.c}}(\sigma)$, which is given by the free additive convolution of a semicircular distribution and one-half the uniform distribution on $[-1,1]$. More precisely,
\[
y_{\text{l.s.c}}(\sigma) = 2\sigma s + 2\sigma^2 w,
\]
where $s$ is a standard semicircular element, $w$ is uniformly distributed on $(-1,1)$, and $s$ and $w$ are freely independent.
Moreover, Ho gave the explicit expressions of the moments, see \cite[Eq.(41)]{Ho2011}. Furthermore, the free cumulants $\kappa_n^{\mu_{p.s.c}}$ were explicitly computed in \cite[Proposition A.1]{Arizmendi24}
\begin{equation}\label{eqn: free cumulant of p.s.c}
\kappa_k^{\mu_{p.s.c}}=\frac{k^{k-1}}{k!}(2\sigma)^{2(k-1)}e^{2k\sigma^2}\quad \text{ for each }\quad k\geq 1.    
\end{equation}
 See \cite{Arizmendi24,Ber92,Ho2011} for more details about the material presented in this section.

\section{Asymptotic convergence rates}

\label{subsect: main part of proof C3-theorem}
In this section, we first present a key intermediate result, followed by the proof of our main result, Theorem \ref{thm:mainthm}.

\subsection{Invariance principle}\label{sec:invarianceppl}
 As outlined in Section \ref{Section: outline of key results}, a key  element of the proof is proving the following invariance principle: for a sufficiently smooth function $f\colon\mathbb{R}_+\to \mathbb{R}$ with bounded derivatives
\begin{align}\label{inv-prin-unnormalized}
    \left|\varphi\big(f(|\pi_n^{g,x}|)\big) - 
    \varphi\big(f(|\pi_n^{h,y}|)\big)\right|,
\end{align}
where the function $h$ and the sequence $y = (y_i)_{i \leq n}$ satisfy the same assumptions in Theorem \ref{thm:mainthm}. Note that the variables in \eqref{inv-prin-unnormalized} are not yet normalized. The bounds derived here serve as an intermediate result; normalization will be addressed in the proof of Theorem \ref{thm:mainthm} in Section \ref{sec:proofmainthm}.

 We extend $f\colon\mathbb{R}_+\to \mathbb{R}$ to a symmetric function on $\mathbb{R}$, denoting this extension by $\tilde{f}$. Since $\tilde{f}$ is an even function, it admits the representation
\begin{equation}\label{Fourier integral}
\tilde{f}(x) = \int_{u\in\mathbb{R}}\cos(u x)\mathcal{F}{\tilde{f}}(\rmd u),    
\end{equation}
where $\mathcal{F}\tilde{f}$ is the Fourier transform of $\tilde{f}$ : 
\begin{equation*}
\mathcal{F}\tilde{f}(x) = \int_{\mathbb{R}} \tilde{f}(x)e^{{i} u x}{\rmd u} = 2\int_{\mathbb{R}_+}{f}(x)\cos(u x){\rmd u} ,\quad u \in \mathbb{R}.
\end{equation*}
Let $(x_i)_{1 \leq  i \leq n}, (y_{i})_{1 \leq  i \leq n} \in \mathcal{A}^{\mathbb{N}}$ be two sequences of  self-adjoint elements  and $g,h\colon \mathbb{R}\to \mathbb{C}$ two smooth functions.  We first observe that, by Lemma \ref{Prop Fourier cosine}, that
\begin{align*}
\varphi\big(f(|\pi_n^{g,x}|)\big) - 
    \varphi\big(f(|\pi_n^{h,y}|)\big) &= \int_{u\in \mathbb{R}} \big(\varphi \big(\cos(u|\pi_n^{g,x}|)\big)\big)  -\varphi \big(\cos(u|\pi_n^{h,y}|)\big) \mathcal{F}{\tilde{f}}(\rmd u) 
    \\&= \int_{u\in \mathbb{R}} \big(\trphi \big(e^{{\sf i}u H({\pingx})}\big) - \trphi \big(e^{{\sf i}u H({\pi^{h,y}_n})}\big)\big) \mathcal{F}{\tilde{f}}(\rmd u) , 
\end{align*}
where $H({\pingx})$ and $H({\pi^{h,y}_n})$ on the right-hand side are the Hermitization matrices $$
H({\pingx})=\begin{bmatrix}
    0 & {\pingx} \\ ({\pingx})^{\star} & 0
\end{bmatrix} \quad \text{and} \quad H({\pi^{h,y}_n})=\begin{bmatrix}
    0 & \pi^{h,y}_n \\ (\pi^{h,y}_n)^{\star} & 0
\end{bmatrix}. 
$$ 
We proceed by applying the Lindeberg method in $(M_2(\cA), \tr_2\varphi)$ to estimate the term 
\[\big|\trphi \big(e^{{\sf i}u H({\pingx})}\big) - \trphi \big(e^{{\sf i}u H({\pi^{h,y}_n})}\big)\big|.
\]
To this end, we introduce the following notation. For any $k\leq \ell \in \{1, \ldots,n\}$, we  define the partial products
\begin{equation*}
    \pi_{[k,\ell]}^{g,x} = X_{k} \dots X_{\ell}
 \quad    \text{and} \quad \pi_{[k,\ell]}^{h,y} = Y_{k} \dots Y_{\ell}, 
\end{equation*}
where $X_j=g(x_j)$ and $Y_j=h(y_j)$ for each $k\leq j \leq l$.
For any integer $1\leq i\leq n$, we also define the interpolating products
$$
z_i =X_{1}\cdots X_{i-1} X_{i}Y_{i+1}\cdots Y_{n} \quad \text{and} \quad z_i^1=X_{1}\cdots X_{i-1}1Y_{i+1}\cdots Y_{n}.
$$
Finally, we define the following two matrices in $\mathcal{M}_{2}(\mathcal{A})$
$$
Z_i=\begin{bmatrix}
    0 & z_i \\ z_i^{\star} & 0
\end{bmatrix} \quad \text{and} \quad Z_i^1=\begin{bmatrix}
    0 & z_i^1 \\ (z_i^1)^{\star} & 0
\end{bmatrix},
$$
to which we shall drop the index $n$ to soothe the notation. The Lindeberg method expresses the above difference as a telescoping sum,  replacing each  $X_{i}$ with the corresponding $Y_{i}$, one at a time. Specifically, we write:
$$
\trphi \big(e^{{\sf i}u H({\pingx})}\big) - \trphi \big(e^{{\sf i
}u H({\pi^{h,y}_n})}\big)=\sum_{i=1}^n \big(\trphi(e^{{\sf i}u Z_i}-e^{{\sf i}u Z_i^1})-\trphi(e^{{\sf i}u Z_{i-1}}-e^{{\sf i}u Z_i^1})\big).
$$
To control $\trphi(e^{{\sf i}u Z_i}-e^{{\sf i}u Z_i^1})$, we apply Lemma \ref{lemma exponential identity} up to the third order and obtain
\begin{multline*}
 \trphi(e^{{\sf i}u Z_i}-e^{{\sf i}u Z_i^1})= 
     {\sf i}u\int_{\mathcal{T}_1} \trphi\big(e^{{\sf i}\alpha_0 u Z_i^1} (Z_i - Z^{1}_i)e^{{\sf i}\alpha_1 u Z_i^1}\big) \rmd \alpha \\-u^2\int_{\mathcal{T}_2} \trphi\big(e^{{\sf i}\alpha_0 u Z_i^1}(Z_i - Z^{1}_i)e^{{\sf i}\alpha_1 u Z_i^1}(Z_i - Z^{1}_i)e^{\alpha_2{\sf i} u Z_i^1}\big) \rmd \alpha 
     \\  -{\sf i}u^3 \int_{\mathcal{T}_{3}} \trphi\big(e^{{\sf i}\alpha_0 u Z_i}(Z_i - Z^{1}_i)e^{{\sf i}\alpha_1 u Z_i^1}(Z_i - Z^{1}_i)e^{{\sf i}\alpha_2 u Z_i^1}(Z_i - Z^{1}_i)e^{{\sf i}\alpha_{3} u Z_i^1}\big) \rmd \alpha,
\end{multline*}
where we recall that $\rmd \alpha = \rmd \alpha_0 \cdots \rmd \alpha_k$ and
$$
\mathcal{T}_k= \Big\{\alpha=(\alpha_0, \dots , \alpha_k) \ | \sum_{i=0}^k \alpha_i =1 \; \text{and} \; 0\leq \alpha_i \leq 1 ; \text{ for } i =0,\dots, k \Big\}. 
$$ 
As the state $\trphi$ is tracial, we have 
\begin{align*}
 &\trphi(e^{{\sf i}u Z_i}-e^{{\sf i}u Z_i^1})\\& = 
      {\sf i}u \ \trphi\big(e^{{\sf i}u Z_i^1} (Z_i - Z^{1}_i)\big) -u^2\int_{\mathcal{T}_1} \trphi\big(e^{{\sf i}\alpha_0 u Z_i^1}(Z_i - Z^{1}_i)e^{{\sf i}\alpha_1 u Z_i^1}(Z_i - Z^{1}_i)\big) \rmd \alpha 
     \\ & \quad- {\sf i}u^3 \int_{\mathcal{T}_{3}} \trphi\big(e^{{\sf i}\alpha_0 u Z_i}(Z_i - Z^{1}_i)e^{{\sf i}\alpha_1 u Z_i^1}(Z_i - Z^{1}_i)e^{{\sf i}\alpha_2 u Z_i^1}(Z_i - Z^{1}_i)e^{{\sf i}\alpha_{3} u Z_i^1}\big) \rmd \alpha.
\end{align*}
Noting that  $Z_i - Z^{1}_i = U_i \begin{bmatrix}0 &X_{i}-1 \\ X_{i}^{\star}-1 & 0\end{bmatrix} U^{\star}_i := U_i (\bm{X}_{i}-I_2) J U^{\star}_i$, where 
\begin{align*}
  \bm{X}_{i} = \begin{bmatrix} X_{i} & 0 \\ 0 & X_{i}^{\star} \end{bmatrix}, \quad U_i = \begin{bmatrix}
               X_{1}\cdots X_{i-1} & 0 \\
               0 & Y_{n}^{\star}\cdots Y_{i+1}^{\star}
              \end{bmatrix}, \quad \text{and}\quad J = \begin{bmatrix} 0 & 1 \\ 1 & 0\end{bmatrix},
\end{align*}
then we write next:
\begin{align*}
& \trphi (e^{{\sf i}u Z_i}-e^{{\sf i}u Z_i^1}) \\
      &={\sf i}u \ \trphi\big(e^{{\sf i}u Z_i^1} U_i (\bm{X}_{i}-I_2) J U^{\star}_i \big) \\ & -u^2\int_{\mathcal{T}_1} \trphi\big(e^{{\sf i}\alpha_0u Z_i^1} U_i (\bm{X}_{i}-I_2) J U^{\star}_i e^{{\sf i}\alpha_1uZ_i^1} U_i (\bm{X}_{i}-I_2) J U^{\star}_i\big) \rmd \alpha 
     \\ & - {\sf i}u^3 \int_{\mathcal{T}_{3}} \trphi\big(e^{{\sf i}\alpha_0uZ_i}U_i (\bm{X}_{i}-I_2) J U^{\star}_i e^{{\sf i}\alpha_1uZ_i^1}U_i (\bm{X}_{i}-I_2) J U^{\star}_i e^{{\sf i}\alpha_2uZ_i^1}U_i (\bm{X}_{i}-I_2) J U^{\star}_i e^{{\sf i}\alpha_{3} uZ_i^1}\big) \rmd \alpha .
    \end{align*}
By similarly treating $\trphi(e^{{\sf i}u Z_{i-1}}-e^{{\sf i}u Z_i^1})$ and recalling that $Z_{i-1} - Z^{1}_i := U_i (\bm{Y}_{i}-I_2) J U^{\star}_i$ where $\bm{Y}_{i}$  is defined analogously to $\bm{X}_{i}$, we obtain 
\begin{multline*}
    \trphi \big(e^{{\sf i}u H({\pingx})}\big) - \trphi \big(e^{{\sf i}uH({\pi^{h,y}_n})}\big) \\= 
    \sum_{i=1}^n \big( {\sf i}u \ \trphi  ( L_{i,1}) -u^2 \int_{\mathcal{T}_{1}}\ \trphi  ( L_{i,2} )\rmd \alpha- {\sf i}u^3 \int_{\mathcal{T}_{3}}\ \trphi( L_{i,3})\rmd \alpha\big),
\end{multline*}
where for any $i=1,\dots,n$,
\begin{align}\label{Li terms}
&L_{i,1} := e^{{\sf i}uZ_i^1}U_i (\bm{X}_{i}-I_2) J U^{\star}_i -e^{{\sf i}uZ_i^1}U_i (\bm{Y}_{i}-I_2) J U^{\star}_i, \notag \\
&L_{i,2}:= e^{{\sf i}u \alpha_0 Z^1_i}U_i (\bm{X}_{i}-I_2) J U^{\star}_ie^{{\sf i}u\alpha_1Z^1_i}U_i (\bm{X}_{i}-I_2) J U^{\star}_i - e^{{\sf i}u\alpha_0Z^1_i}U_i (\bm{Y}_{i}-I_2) J U^{\star}_ie^{{\sf i}u\alpha_1Z^1_i}U_i (\bm{Y}_{i}-I_2) J U^{\star}_i, \notag\\
& L_{i,3}:=  e^{{\sf i}u \alpha_0 Z_i}\prod_{k=1}^3 \big(U_i (\bm{X}_{i}-I_2) J U^{\star}_ie^{{\sf i}u \alpha_k Z^1_i}\big)  - e^{{\sf i}u \alpha_0 Z_{i-1}}\prod_{k=1}^3 \big(U_i (\bm{Y}_{i}-I_2) J U^{\star}_i e^{{\sf i}u \alpha_k Z^1_i}\big).\notag\\~
\end{align}
We shall denote, throughout, for any  $x\in \mathcal{A}_{sa}^{\mathbb{N}}$ and continuous function $g\colon\mathbb{R}\to \mathbb{C}$, the quantity
\begin{align}
\label{eqn:mngx}
M_{g,x}^n = \max_{1\leq i \leq n} \{ \max\{\varphi(g(x_1)\cdots g(x_i))^{-1},\varphi(g(x_1)\cdots g(x_i))^{-1}\},~1\leq i \leq n\}.
\end{align}
\subsubsection*{\bf First Order Term}
We assume that $g,h$ and $x,y$ satisfy the hypothesis of Theorem \ref{thm:mainthm}. 
\begin{proposition}
\label{prop:firstorderterm}
With the notations introduced at the beginning of Section \ref{sec:invarianceppl} and \eqref{eqn:mngx}, for any $1 \leq i \leq n$, for any $\zeta > 0$ 
\begin{align*}
|\trphi(L_{i,1}) |  &\lesssim   \big(M_{g,x}^n\|\pi_{[1,n]}^{g,x}\|_{L^2_\varphi}+ M_{h,y}^n
 \|\pi_{[1,n]}^{h,y}\|_{L^2_\varphi}\big)^2 \\&
 \times \Big(\big(\big\| \|  g^{\prime\prime} \|_{\gamma,[x^-,x^+]}\big\|_{L^{1+\zeta^{-1}}_\varphi} \vee \big\| \|  h^{\prime\prime} \|_{\gamma,[y^-,y^+]} \big\|_{L^{1+\zeta^{-1}}_\varphi} \big)\big(
  \|x\|_{L^{(2+\gamma)(1+\zeta)}_\varphi}^{{2+\gamma}}\vee \|y\|_{L^{(2+\gamma)(1+\zeta)}_\varphi}^{{2+\gamma}}\big)
 \\& \quad+ \big|g^{\prime\prime}(0) \| x_i\|_{L^2_\varphi}^2 - h^{\prime\prime}(0)\|y_i\|_{L^2_\varphi}^2\big|\Big).
\end{align*}
\end{proposition}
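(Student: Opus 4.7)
The plan is to Taylor expand $g(x_i)-1$ and $h(y_i)-1$ up to second order around $0$ in the matrix algebra $(M_2(\mathcal A),\trphi)$, and then exploit the freeness of $x_i$ (resp.\ $y_i$) from every other factor appearing in $L_{i,1}$ to cancel the linear contribution and reduce the quadratic one to a scalar variance. Using $g(0)=h(0)=1$ and the assumption $g'(0)=g''(0)\in\mathbb R$, the matrix-valued expansion reads
\[
\bm X_i-I_2 \;=\; g'(0)\,x_i\,I_2\;+\;\tfrac12 g''(0)\,x_i^2\,I_2\;+\;\bm R_{g,i},
\]
with $\bm R_{g,i}$ diagonal and its entries bounded pointwise by $\tfrac12\|g''\|_{\gamma,[x_i^-,x_i^+]}|x_i|^{2+\gamma}$ via Taylor's theorem with $\gamma$-H\"older control of $g''$, and an analogous expansion for $\bm Y_i-I_2$. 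Substituting into $L_{i,1}$ splits $\trphi(L_{i,1})$ into linear, quadratic and remainder contributions.

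The linear contribution vanishes: every factor other than $x_iI_2$ lies in the subalgebra generated by $\{x_j,y_j:j\ne i\}$ (through $U_i$, $J$, and $Z_i^1$), hence is free from $x_i$; cycling $x_iI_2$ to the front by traciality and applying $\varphi(x_i\,c)=\varphi(x_i)\,\varphi(c)=0$ on each matrix-entry $c$ kills every linear term. The same mechanism handles the quadratic pieces, but now $\varphi(x_i^2 c)=\|x_i\|_{L^2_\varphi}^2\varphi(c)$, and since the $g''$ and $h''$ contributions share the identical matrix part, their difference collapses into
\[
\tfrac12\bigl(g''(0)\|x_i\|_{L^2_\varphi}^2-h''(0)\|y_i\|_{L^2_\varphi}^2\bigr)\,\trphi\bigl(e^{{\sf i}uZ_i^1}U_iJU_i^\star\bigr).
\]
Unitarity of $e^{{\sf i}uZ_i^1}$, $\|J\|_\infty=1$, and the tracial Cauchy--Schwarz inequality then give $|\trphi(e^{{\sf i}uZ_i^1}U_iJU_i^\star)|\le\|U_i\|_{L^2_\varphi}^2$, accounting for the variance summand in the statement.

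For the remainder contribution $\trphi(e^{{\sf i}uZ_i^1}U_i\bm R_{g,i}JU_i^\star)$, I cyclically rewrite it as $\trphi(\bm R_{g,i}M)$ with $M:=JU_i^\star e^{{\sf i}uZ_i^1}U_i$. Because $\bm R_{g,i}$ is diagonal with $x_i$-only entries, freeness yields $\trphi(\bm R_{g,i}M)=\tfrac12\sum_{j}\varphi(R_{g,i}^{(j)})\,\varphi(M_{jj})$, and noncommutative H\"older gives $|\varphi(M_{jj})|\le 2\|M\|_{L^1_\varphi}\le 2\|U_i\|_{L^2_\varphi}^2$. The scalar expectation $|\varphi(R_{g,i}^{(j)})|\le \varphi(|R_{g,i}^{(j)}|)\lesssim\varphi(\|g''\|_{\gamma,[x_i^-,x_i^+]}|x_i|^{2+\gamma})$ is then split by H\"older's inequality with conjugate exponents $1+\zeta^{-1}$ and $1+\zeta$ (which indeed satisfy $\tfrac1{1+\zeta^{-1}}+\tfrac1{1+\zeta}=1$), producing exactly the factor of H\"older and moment norms demanded; the $h,y$ remainder is treated symmetrically.

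It remains to pass from $\|U_i\|_{L^2_\varphi}^2=\tfrac12(\|\pi_{[1,i-1]}^{g,x}\|_{L^2_\varphi}^2+\|\pi_{[i+1,n]}^{h,y}\|_{L^2_\varphi}^2)$ to the global factor $(M_{g,x}^n\|\pi_{[1,n]}^{g,x}\|_{L^2_\varphi}+M_{h,y}^n\|\pi_{[1,n]}^{h,y}\|_{L^2_\varphi})^2$. I use the freeness-based multiplicativity
\[
\|\pi_{[1,n]}^{g,x}\|_{L^2_\varphi}^2\;=\;\|\pi_{[1,i-1]}^{g,x}\|_{L^2_\varphi}^2\,\|\pi_{[i,n]}^{g,x}\|_{L^2_\varphi}^2,
\]
obtained by expanding $\varphi((\pi_L\pi_R)^\star\pi_L\pi_R)$, cycling $\pi_L^\star\pi_L$ past $\pi_R$, and invoking freeness of $\pi_L^\star\pi_L$ from $\pi_R\pi_R^\star$, together with the Cauchy--Schwarz lower bound $\|\pi_{[i,n]}^{g,x}\|_{L^2_\varphi}\ge|\varphi(\pi_{[i,n]}^{g,x})|\ge (M_{g,x}^n)^{-1}$ coming from the definition of $M_{g,x}^n$, and its symmetric counterpart for the $h,y$ suffix. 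The main obstacle is the bookkeeping in $(M_2(\mathcal A),\trphi)$: at each step one must verify that the scalar insertions $x_iI_2$, $x_i^2I_2$ and the diagonal remainder $\bm R_{g,i}$ all factor cleanly through freeness across the unitary $e^{{\sf i}uZ_i^1}$, and that the two H\"older exponents conspire to match precisely the norms $L^{1+\zeta^{-1}}$ and $L^{(2+\gamma)(1+\zeta)}$ declared in the proposition.
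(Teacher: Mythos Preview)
Your proof is correct and follows the same conceptual skeleton as the paper's—Taylor expand $g(x_i)-1$ and $h(y_i)-1$ to second order, use freeness of $x_i,y_i$ from $U_i,Z_i^1$ to kill the linear piece and scalarize the quadratic piece, bound the H\"older remainder via the conjugate pair $(1+\zeta^{-1},1+\zeta)$, and finally control the partial products by the full product and $M_{g,x}^n,M_{h,y}^n$. The organization, however, differs in two places worth flagging.

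First, the paper works with operator-valued freeness over $M_2(\mathbb C)$: it invokes $\star$-freeness of $\bm X_i$ from $U_i$ with respect to $\varphi^{(2)}$ to replace $\bm X_i-I_2$ by the scalar matrix $\varphi^{(2)}(\bm X_i-I_2)$ in one stroke, and then appeals to Lemma~\ref{Lemma:moment-est} (which packages the Taylor expansion and the H\"older step) to bound $\|\varphi^{(2)}(\bm X_i-I_2)-\varphi^{(2)}(\bm Y_i-I_2)\|_{M_2(\mathbb C)}$. You instead expand first and use scalar freeness entrywise; this is more elementary and self-contained, but you are essentially reproving the first part of Lemma~\ref{Lemma:moment-est} inline.

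Second, for the partial-product bound the paper uses Lemma~\ref{thm:NC L^p-inequality} (contractivity of the conditional expectation onto $\mathcal F_{i-1}$), which gives $\|\pi_{[1,i-1]}^{g,x}\|_{L^p}\le |\varphi(X_i)\cdots\varphi(X_n)|^{-1}\|\pi_{[1,n]}^{g,x}\|_{L^p}$ for every $p$. Your $L^2$-multiplicativity argument $\|\pi_L\pi_R\|_{L^2}^2=\|\pi_L\|_{L^2}^2\|\pi_R\|_{L^2}^2$ (via traciality and freeness of $\pi_L^\star\pi_L$ from $\pi_R\pi_R^\star$) is a neat shortcut that suffices here, but note it is specific to $p=2$; for the second- and third-order terms you will need $L^4$ and $L^8$ bounds, where the conditional-expectation lemma becomes unavoidable.
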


\begin{proof}[Proof of Proposition \ref{prop:firstorderterm}]	

By using $\star$-freeness of  $U_i,\bm{X}_{i}, \bm{Y}_{i}$ with respect to $\varphi^{(2)}$, and the fact that  $\trphi$ is tracial, we obtain 
\begin{align*}
\big|\trphi(L_{i,1}) \big|
&=\Big|\trphi\Big(e^{{\sf i}u Z^1_i} \big(\varphi^{(2)} (\bm{X}_{i}-I_2)
-\varphi^{(2)} (\bm{Y}_{i}-I_2)\big)U_iJU_i^{\star}\Big)\Big|
\\ & \leq \Big|\tr_2 \Big( \varphi^{(2)} (U_iJU_i^{\star}e^{{\sf i}u Z^1_i}) \big(\varphi^{(2)} (\bm{X}_{i}-I_2)
-\varphi^{(2)} (\bm{Y}_{i}-I_2)\big)\Big)\Big|
\\&\leq \big\| \varphi^{(2)}(|U_iJU_i^{\star}|)\big\|_{\mathcal{M}_{2}(\mathbb{C})} \big\|\varphi^{(2)} (\bm{X}_{i}-I_2)
-\varphi^{(2)} (\bm{Y}_{i}-I_2) \big\|_{\mathcal{M}_{2}(\mathbb{C})}.
\end{align*} 
Computing the coefficients of the matrix $U_iJU_i^{\star}$ and applying Cauchy-Schwartz inequality yield: 
\begin{align*}
\big\| \varphi^{(2)}(|U_iJU_i^{\star}|)\big\|_{\mathcal{M}_{2}(\mathbb{C})} &= \varphi\big(z_i^1(z_i^1)^{\star}\big)^{\frac{1}{2}} \leq \|\pi^{g,x}_{[1,\,i-1]} \|_{L^2_\varphi}\|\pi^{h,y}_{[i+1,\,n]} \|_{L^2_\varphi}. 
\end{align*}
Then using the inequality in Lemma \ref{thm:NC L^p-inequality}, we bound the above term as follows 
\begin{align*}
\|\pi^{g,x}_{[1,\,i-1]} \|_{L^2_\varphi}\|\pi^{h,y}_{[i+1,\,n]} \|_{L^2_\varphi} &\leq |\varphi(X_{i})\cdots\varphi(X_{n})|^{-1} \|\pi_{[1,n]}^{g,x}\|_{L^2_\varphi} |\varphi(Y_1)\cdots\varphi(Y_{i})|^{-1}\|\pi_{[1,n]}^{h,y}\|_{L^2_\varphi}
\\& \leq  M_{g,x}^nM_{h,y}^n
 \|\pi_{[1,n]}^{g,x}\|_{L^2_\varphi}\|\pi_{[1,n]}^{h,y}\|_{L^2_\varphi}.
 \end{align*}
By applying Lemma \ref{Lemma:moment-est} and putting the above bounds together, we infer 
\begin{align*}
|\trphi(L_{i,1}) |  &\lesssim    M_{g,x}^nM_{h,y}^n
\|\pi_{[1,n]}^{g,x}\|_{L^2_\varphi}\|\pi_{[1,n]}^{h,y}\|_{L^2_\varphi} 
 \\& \times \Big(\big(\big\| \|  g^{\prime\prime} \|_{\gamma,[x^-,x^+]}\big\|_{L^{1+\zeta^{-1}}_\varphi} \vee \big\| \|  h^{\prime\prime} \|_{\gamma,[y^-,y^+]} \big\|_{L^{1+\zeta^{-1}}_\varphi} \big)\big(
  \|x_i\|_{L^{(2+\gamma)(1+\zeta)}_\varphi}^{{2+\gamma}}\vee \|y_i\|_{L^{(2+\gamma)(1+\zeta)}_\varphi}^{{2+\gamma}}\big)
 \\& \quad+ \big|g^{\prime\prime}(0) \| x_i\|_{L^2_\varphi}^2 - h^{\prime\prime}(0)\|y_i\|_{L^2_\varphi}^2\big|\Big).
 \end{align*}
Finally, we conclude our estimate for $\trphi(L_{i,1})$ by observing
$$
 M_{g,x}^n M_{h,y}^n
 \|\pi_{[1,n]}^{g,x}\|_{L^2_\varphi}\|\pi_{[1,n]}^{h,y}\|_{L^2_\varphi} \leq \big(M_{g,x}^n\|\pi_{[1,n]}^{g,x}\|_{L^2_\varphi}+ M_{h,y}^n
 \|\pi_{[1,n]}^{h,y}\|_{L^2_\varphi}\big)^2.
$$
\end{proof}

\subsubsection*{\bf Second Order Term}

\begin{proposition}\label{prop:secondorderterm}
Using the notations introduced in Section \ref{sec:invarianceppl} and \eqref{eqn:mngx}, we obtain for any $1\leq i\leq n$,
\begin{align*}
\Big|& \trphi \big(L_{i,2}\big) \Big| 
\\& \lesssim   \big(M_{g,x}^n\|\pi_{[1,n]}^{g,x}\|_{L^4_\varphi}+ M_{h,y}^n \|\pi_{[1,n]}^{h,y}\|_{L^4_\varphi}\big)^4 \\
& \hspace{0.5cm}\times  \Big({\big|g^{\prime\prime}(0) \| x_i\|_{L^2_\varphi}^2 - h^{\prime\prime}(0)\|y_i\|_{L^2_\varphi}^2\big| +\big|g^{\prime}(0)^2 \| x_i\|_{L^2_\varphi}^2 - h^{\prime}(0)^2\|y_i\|_{L^2_\varphi}^2\big|}\\ & \hspace{0.5cm}+{\big||g^{\prime}|^2(0) \| x_i\|_{L^2_\varphi}^2 - |h^{\prime}|^2(0)\|y_i\|_{L^2_\varphi}^2\big| }+\|x_i\|_{L^3_{\varphi}}^3 +\|y_i\|_{L^3_{\varphi}}^3 
+ \big(\|x_i\|_{L^{6(1+\zeta)}_{\varphi}}^{3+\gamma}\vee \|y_i\|_{L^{6(1+\zeta)}_{\varphi}}^{3+\gamma}\big)\Big)Q.
\end{align*}
where $Q$ is a polynomial expression of a finite number of positive fractional powers of 
$\|\| g\|_{1,[x_i^-,x_i^+]} \|_{L^{1+\zeta^{-1}}_{\varphi}}$, $\|\| h\|_{1,[y_i^-,y_i^+]} \|_{L^{1+\zeta^{-1}}_{\varphi}}$, 
$\| \|  g^{\prime\prime} \|_{\gamma,[x_i^-,x_i^+]} \|_{L^{2(1+\zeta^{-1})}_\varphi}$, and $\| \|  h^{\prime\prime} \|_{\gamma,[y_i^-,y_i^+]} \|_{L^{2(1+\zeta^{-1})}_\varphi}$.
\end{proposition}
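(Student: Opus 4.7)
The strategy parallels the proof of Proposition~\ref{prop:firstorderterm}, but the expansion now carries two copies of $\bm{X}_{i} - I_2$ (respectively $\bm{Y}_{i} - I_2$) interlaced by $U_i$, $U_i^\star$ and by exponentials in $Z_i^1$. Using cyclicity of $\trphi$, I would first rewrite $\trphi(L_{i,2})$ in the schematic form
\[
\trphi\bigl(A_0(\bm{X}_{i} - I_2)A_1(\bm{X}_{i} - I_2)\bigr) - \trphi\bigl(A_0(\bm{Y}_{i} - I_2)A_1(\bm{Y}_{i} - I_2)\bigr),
\]
where $A_0 = JU_i^\star e^{\mathsf{i}u\alpha_0 Z_i^1}U_i$ and $A_1 = JU_i^\star e^{\mathsf{i}u\alpha_1 Z_i^1}U_i$ are $\star$-free from both $\bm{X}_{i}$ and $\bm{Y}_{i}$ with respect to $\varphi^{(2)}$. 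Writing $\bm{X}_{i} - I_2 = \widetilde{\bm{X}}_{i} + (\varphi^{(2)}(\bm{X}_{i}) - I_2)$ with $\widetilde{\bm{X}}_{i}$ centered, the freeness relation yields a decomposition into a non-centered/non-centered piece, two mixed pieces, and a centered/centered piece. In each case the scalar $2\times 2$ matrices coming from $\varphi^{(2)}(A_0)$ and $\varphi^{(2)}(A_1)$ (and the inner conditional expectations) have operator norms controlled by products of $L^2$-norms of partial products, which via Lemma~\ref{thm:NC L^p-inequality} convert into four factors of type $M_{g,x}^n\|\pi_{[1,n]}^{g,x}\|_{L^4_\varphi}$, yielding the fourth-power prefactor in the statement.

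Next, I would Taylor-expand
\[
g(x_i) = 1 + g'(0)x_i + \tfrac{1}{2}g''(0)x_i^2 + R_g(x_i),
\]
with integral remainder $R_g$ controlled by $\|g''\|_{\gamma,[x_i^-,x_i^+]}|x_i|^{2+\gamma}$, and analogously for $h$. Substituting into $\bm{X}_{i} - I_2$ and reading off the relevant matrix entries, the leading contribution of the non-centered/non-centered piece is $(\tfrac{1}{2}g''(0)\varphi(x_i^2))^2 I_2$, while the leading contribution of the centered/centered piece involves both $g'(0)^2\varphi(x_i^2)$ (on the diagonal of $\varphi^{(2)}(\widetilde{\bm{X}}_{i} M \widetilde{\bm{X}}_{i})$ when the two factors pair as $(X_i - 1)(X_i - 1)$ through $J$) and $|g'(0)|^2\varphi(x_i^2)$ (when they pair as $(X_i - 1)(X_i^\star - 1)$). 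Matching these leading terms against the analogous expansion for $\bm{Y}_{i}$ and applying the triangle inequality produces the three discrepancies $|g''(0)\|x_i\|_{L^2_\varphi}^2 - h''(0)\|y_i\|_{L^2_\varphi}^2|$, $|g'(0)^2\|x_i\|_{L^2_\varphi}^2 - h'(0)^2\|y_i\|_{L^2_\varphi}^2|$ and $||g'|^2(0)\|x_i\|_{L^2_\varphi}^2 - |h'|^2(0)\|y_i\|_{L^2_\varphi}^2|$ appearing in the proposition.

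The remaining contributions split into two families of error terms. Cross products between the linear and quadratic parts of the Taylor expansion give rise to $\|x_i\|_{L^3_\varphi}^3 + \|y_i\|_{L^3_\varphi}^3$. Interactions involving the Hölder remainder are handled by Lemma~\ref{Lemma:moment-est} and Hölder's inequality at exponents $(1+\zeta,1+\zeta^{-1})$, producing a term of order $\|x_i\|_{L^{6(1+\zeta)}_\varphi}^{3+\gamma} \vee \|y_i\|_{L^{6(1+\zeta)}_\varphi}^{3+\gamma}$ multiplied by $\|\|g''\|_{\gamma,[x_i^-,x_i^+]}\|_{L^{2(1+\zeta^{-1})}_\varphi}$ (and the $h$ analogue), which is absorbed into the polynomial factor $Q$. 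The Lipschitz semi-norms $\|\|g\|_{1,[x_i^-,x_i^+]}\|_{L^{1+\zeta^{-1}}_\varphi}$ and $\|\|h\|_{1,[y_i^-,y_i^+]}\|_{L^{1+\zeta^{-1}}_\varphi}$ also enter $Q$ whenever a ``spare'' factor $X_i - 1$ (not used for extracting a leading Taylor coefficient) is bounded pointwise by $\|g\|_{1,[x_i^-,x_i^+]}|x_i|$.

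The main obstacle is the matrix-valued bookkeeping: the expansion $(\bm{X}_{i} - I_2) M (\bm{X}_{i} - I_2)$ produces four matrix-valued summands, each of which must be matched entry-by-entry against its $\bm{Y}_{i}$ counterpart. The diagonal structure $\bm{X}_{i} = \mathrm{diag}(X_i, X_i^\star)$ combined with the flip $J$ means that $(X_i - 1)^2$ and $(X_i - 1)(X_i^\star - 1)$ sit in different entries and are weighted by different leading Taylor coefficients, so the three second-moment discrepancies in the statement appear as genuinely distinct error terms rather than collapsing together. One then also has to verify that the non-centered/non-centered component, whose first-order piece resembles a square of the first-moment error bounded in Proposition~\ref{prop:firstorderterm}, can be reabsorbed into the $|g''(0)\|x_i\|_{L^2_\varphi}^2 - h''(0)\|y_i\|_{L^2_\varphi}^2|$ term using the identity $a^2 - b^2 = (a - b)(a + b)$.
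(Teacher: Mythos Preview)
Your outline is on the right track and essentially parallels the paper: separate the $\bm{X}_i$-dependence from the $U_i$-dependence via $\star$-freeness with respect to $\varphi^{(2)}$, then feed the resulting scalar $2\times2$ quantities into Lemma~\ref{Lemma:moment-est}. The main device you are missing is Lemma~\ref{Prop Fourier cosine}. The paper does not leave $e^{\mathsf{i}u\alpha_j Z_i^1}$ inside your blocks $A_0,A_1$; it first writes $e^{\mathsf{i}u\alpha_j Z_i^1}=D_{i,u\alpha_j}+\widetilde{D}_{i,u\alpha_j}J$ with $D,\widetilde D$ diagonal, so that after the freeness step the scalar matrix sandwiched between the two copies of $\bm{X}_i-I_2$ is either diagonal or of the form (diagonal)$\cdot J$. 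This is precisely what lets Proposition~\ref{prop:cumulantsampliation} convert the $M_2(\mathbb C)$-valued cumulant $\kappa_2^{\varphi^{(2)}}$ into a diagonal matrix of scalar cumulants $\bm\kappa_2^{\varphi,\bm X_i-I_2}(\varepsilon)$, and it is exactly the split between $\varepsilon=(1,0)$ and $\varepsilon=(0,0)$ that produces the two distinct discrepancies $|g'(0)^2\|x_i\|^2-\cdots|$ and $||g'(0)|^2\|x_i\|^2-\cdots|$. Your proposal to ``read off the relevant matrix entries'' after Taylor-expanding can be made to work, but without the $D+\widetilde D J$ splitting you will find it hard to justify cleanly that the inner conditional expectation $\varphi^{(2)}(A_1)$ interacts with $\bm X_i-I_2$ in the simple way you describe.

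One smaller inaccuracy: the $L^4$-norm on the full product does not come from ``$L^2$-norms of partial products plus Lemma~\ref{thm:NC L^p-inequality}''. For the $\kappa_2$-pieces ($L_{i,2}^{(1)},L_{i,2}^{(2)}$ in the paper) one indeed only needs $\|\varphi^{(2)}(U_iU_i^\star)\|$, hence $L^2$. The $L^4$ appears in the product-of-expectations pieces ($L_{i,2}^{(3)},L_{i,2}^{(4)}$), where one must bound $\trphi(U_iJU_i^\star D\,U_iJU_i^\star D')$; Cauchy--Schwarz turns this into $\|\pi_{[1,i-1]}^{g,x}\|_{L^4_\varphi}^2\|\pi_{[i+1,n]}^{h,y}\|_{L^4_\varphi}^2$, and only then does Lemma~\ref{thm:NC L^p-inequality} give the prefactor $(M_{g,x}^n\|\pi_{[1,n]}^{g,x}\|_{L^4_\varphi}+M_{h,y}^n\|\pi_{[1,n]}^{h,y}\|_{L^4_\varphi})^4$. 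Finally, your ``mixed pieces'' do vanish by freeness (since $\varphi^{(2)}(\widetilde{\bm X}_i)=0$), so the decomposition really has two, not four, surviving families --- matching the paper's $\kappa_2$-terms and product-of-expectations terms.
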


\begin{proof}[Proof of Proposition \ref{prop:secondorderterm}]
We follow the same steps as before and write
\begin{align*}
\varphi^{(2)} \big(L_{i,2}^{x}\big) &:=
\varphi^{(2)} \big(e^{{\sf i}u \alpha_0 Z^1_i}U_i (\bm{X}_{i}-I_2) J U^{\star}_ie^{{\sf i}u\alpha_1Z^1_i}U_i (\bm{X}_{i}-I_2) J U^{\star}_i\big)
\end{align*}
By using the fact that $U_i$ and $\bm{X}_{i}$ are free with respect to $\varphi^{(2)}$, one obtains
\begin{multline*}
\varphi^{(2)} (L_{i,2}^{\bm{x}}) = \kappa^{\varphi^{(2)}}_2\big( ({\bm X}_{i}-I_2) J \varphi^{(2)}\big(U_i^{\star}e^{{\sf i}u \alpha_1 Z^1_i}U_i \big), ({\bm X}_{i}-I_2)  \big) \varphi^{(2)}\big( J U_i^{\star} e^{{\sf i}u \alpha_0 Z^1_i}U_i \big)\\
- {\varphi^{(2)}}\big(  ({\bm X}_{i}-I_2) J \big)\varphi^{(2)}\big(U_i^{\star}e^{{\sf i}u \alpha_1 Z^1_i}U_i  {\varphi^{(2)}}\big( {\bm X}_{i}-I_2\big)  J U_i^{\star} e^{{\sf i}u \alpha_0 Z^1_i}U_i \big) .
\end{multline*}
We use Lemma \ref{Prop Fourier cosine} to write $e^{{\sf i}uZ^1_i}= D_{i, u} + \widetilde{D}_{i,u} J$, where:
\begin{equation*}\label{eqn:D_{i,u}_and_tilde-D_{i,u}}
D_{i,u}=  \begin{bmatrix} \cos(u|z^1_i|) & 0 \\  0 &\cos(u|(z^1_i)^{\star}|) \end{bmatrix}\quad \text{and} \quad 
\widetilde{D}_{i,u}= \begin{bmatrix}  {\sf i}u (z^1_i)^{\star}{\rm sinc}(u|z^1_i|) &0  \\ 0&{\sf i}u z^1_i\ {\rm sinc}(u|(z^1_i)^{\star}|)  \end{bmatrix}.
\end{equation*}
Therefore, by noting that $\varphi^{(2)}\big(U_i^{\star}D_{i,u \alpha_1}U_i \big)$ and $\varphi^{(2)}\big(JU_i^{\star}\widetilde{D}_{i,u \alpha_1}JU_i \big)$ are diagonal matrices, then by Proposition \ref{prop:cumulantsampliation} and the notation introduced therein, we obtain
\begin{align*}
\varphi^{(2)} (L_{i,2}^{\bm{x}})=&\bm{\kappa}^{\varphi,\,{\bm X_{i}-I_2}}_2(1,0) J\varphi^{(2)}\big(U_i^{\star}D_{i,u \alpha_1}U_i \big)\varphi^{(2)}\big( J U_i^{\star} e^{{\sf i}u \alpha_0 Z^1_i}U_i \big)\\& 
+ \bm{\kappa}^{\varphi,\,{\bm X_{i}-I_2}}_2(0,0) \varphi^{(2)}\big(JU_i^{\star}\widetilde{D}_{i,y \alpha_1}JU_i \big)\varphi^{(2)}\big( J U_i^{\star} e^{{\sf i}u \alpha_0 Z^1_i}U_i \big) \\&
- {\varphi^{(2)}}\big(  {\bm X_{i}-I_2} \big) J{\varphi^{(2)}}\big(   {\bm X_{i}-I_2} \big) \varphi^{(2)}\big(U_i^{\star}D_{i,u \alpha_1}U_i  J U_i^{\star} e^{{\sf i}u \alpha_0 Z^1_i}U_i \big)   \\&
-{\varphi^{(2)}}\big(  {\bm X_{i}-I_2} \big){\varphi^{(2)}}\big(  {\bm X_{i}-I_2} \big) \varphi^{(2)}\big(JU_i^{\star}\widetilde{D}_{i,u \alpha_1}JU_i J U_i^{\star} e^{{\sf i}u \alpha_0 Z^1_i}U_i \big).
\end{align*}
We now also write $e^{{\sf i}u \alpha_0 Z^1_i} = D_{i, u\alpha_0} + \widetilde{D}_{i,u\alpha_0} J $ to have expressions with only diagonal matrices  and $J$'s. Keep in mind that the terms where the matrix $J$ appears an odd number of times have zero as diagonal entries, and hence their trace is zero.   We note that $L_{i,2}:=L_{i,2}^{\bm{x}} - L_{i,2}^{\bm{y}}$, with the term $L_{i,2}^{\bm{y}}$ being defined and treated in the same way as $L_{i,2}^{\bm{x}}$ but with  $\bm{Y}_i$  replacing  $\bm{X}_i$'s. Therefore, taking the trace, we obtain
\begin{align*}
 &   \trphi \big(L_{i,2}\big) = \trphi \Big( L_{i,2}^{(1)}+L_{i,2}^{(2)}-L_{i,2}^{(3)}-L_{i,2}^{(4)} \Big) , \end{align*}
where, with the notations introduced in  Appendix $A$:
\begin{align*}
&L_{i,2}^{(1)}= \big( \bm{\kappa}^{\varphi,\bm{X}_{i}-I_2}_2(1,0) - \bm{\kappa}^{\varphi,\bm{Y}_{i}-I_2}_2(1,0)\big) J U_i^{\star}D_{i,u \alpha_1}U_i 
\varphi^{(2)}\big( J U_i^{\star} D_{i,u \alpha_0} U_i \big),
\\& L_{i,2}^{(2)}= \big(\bm{\kappa}^{\varphi,\bm{X}_{i}-I_2}_2(0,0) -\bm{\kappa}^{\varphi,\bm{Y}_{i}-I_2}_2(0,0)\big) JU_i^{\star}\widetilde{D}_{i,u \alpha_1}JU_i  \varphi^{(2)}\big( J U_i^{\star} \widetilde{D}_{i,u \alpha_0}JU_i \big),
\\& L_{i,2}^{(3)}= \Big( {\varphi^{(2)}}\big( \bm{X}_{i}-I_2 \big) {\varphi^{(2)}}\big( \bm{X}^{\star}_{i}-I_2 \big) -  {\varphi^{(2)}}\big( {\bm Y}_{i}-I_2  \big) {\varphi^{(2)}}\big({\bm Y}_{i}^{\star}-I_2\big)\Big)J  U_i^{\star}D_{i,u \alpha_1}U_i  J U_i^{\star} D_{i,u \alpha_0} U_i ,
\\& L_{i,2}^{(4)}= \Big( {\varphi^{(2)}}\big(  \bm{X}_{i}-I_2 \big) {\varphi^{(2)}}\big(\bm{X}_{i}-I_2 \big) -  {\varphi^{(2)}}\big({\bm Y}_{i}-I_2 \big) {\varphi^{(2)}}\big(  {\bm Y}_{i}-I_2 \big)\Big) J U_i^{\star}\widetilde{D}_{i,u \alpha_1}JU_i  J U_i^{\star} \widetilde{D}_{i,u \alpha_0}JU_i.
\end{align*}
We shall now estimate the order of each of the above terms and start with $L_{i,2}^{(1)}$:
\begin{align*}
    \big|&\trphi \big(L_{i,2}^{(1)}\big) \big|\\&\leq \big\| \bm{\kappa}^{\varphi,\, {\bm X}_{i}-I_2}_2(1,0) - \bm{\kappa}^{\varphi, \,{\bm Y}_{i}-I_2}_2(1,0)\big\|_{\mathcal{M}_{2}(\mathbb{C})}\tr_2\big| \varphi^{(2)} (J U_i^{\star}D_{i,u \alpha_1}U_i) 
\varphi^{(2)}\big( J U_i^{\star} D_{i,u \alpha_0} U_i \big)\big|
\\&\leq \big\| \bm{\kappa}^{\varphi,\, {\bm X}_{i}-I_2}_2(1,0) - \bm{\kappa}^{\varphi, \,{\bm Y}_{i}-I_2}_2(1,0)\big\|_{\mathcal{M}_{2}(\mathbb{C})}\tr_2\big| \varphi^{(2)} (U_iU_i^{\star}D_{i,u \alpha_1}) 
J\varphi^{(2)}\big(U_iU_i^{\star} D_{i,u \alpha_0}  \big)\big|.
\end{align*}
Noting that $\|D_{i,u \alpha_0}\|=\|D_{i,u \alpha_1}\|\leq1$,
\begin{align*}
\tr_2\big| &\varphi^{(2)} (U_iU_i^{\star}D_{i,u \alpha_1}) 
J\varphi^{(2)}\big(U_iU_i^{\star} D_{i,u \alpha_0}  \big)\big| \\
&\leq\|\varphi^{(2)} (U_iU_i^{\star}D_{i,u \alpha_1}) \|_{\mathcal{M}_2(\mathbb{C})}
\|\varphi^{(2)}\big(U_iU_i^{\star} D_{i,u \alpha_0})\|_{\mathcal{M}_2(\mathbb{C})} \leq\|\varphi^{(2)} (U_iU_i^{\star}) \|^2_{\mathcal{M}_2(\mathbb{C})}  \\ 
&= \max\Big( \varphi\big(\pi_{[1,i-1]}^{g,x}(\pi_{[1,i-1]}^{g,x})^{\star}\big), \varphi\big(\pi_{[i+1,n]}^{h,y}(\pi_{[i+1,n]}^{h,y})^{\star}\big) \Big)^2 \\
&=\max\big( \|\pi_{[1,i-1]}^{g,x} \|^4_{L^2_\varphi}, \|\pi_{[i+1,n]}^{h,y} \|^4_{L^2_\varphi} \big). 
\end{align*}
By using Lemma \ref{thm:NC L^p-inequality}, we infer 
\begin{align*}
\tr_2\big| \varphi^{(2)} &(U_iU_i^{\star}D_{i,u \alpha_1}) 
J\varphi^{(2)}\big(U_iU_i^{\star} D_{i,u \alpha_0}  \big)\big| 
\\ &\leq  \max\big( |\varphi(X_{i})\cdots \varphi(X_n)|^{-4} \|\pi_{[1,n]}^{g,x} \|^4_{L^2_\varphi} ,
|\varphi(Y_{1})\cdots \varphi(Y_{i})|^{-4} \|\pi_{[1,n]}^{h,y} \|^4_{L^2_\varphi}  \big)
\\ 
& \leq  \max\big( (M_{g,x}^n)^4 \|\pi_{[1,n]}^{g,x} \|^4_{L^2_\varphi} ,
(M_{h,y}^n)^4 \|\pi_{[1,n]}^{h,y} \|^4_{L^2_\varphi}  \big) \\
&\leq  \big( M_{g,x}^n \|\pi_{[1,n]}^{g,x} \|_{L^2_\varphi} +
M_{h,y}^n\|\pi_{[1,n]}^{h,y} \|_{L^2_\varphi}  \big)^4,
\end{align*}
and  conclude the proof with the following inequality:
\begin{align}\label{eqn:secondorderfirst}
\big|\trphi \big(L_{i,2}^{(1)}\big) \big| \leq  \big( M_{g,x}^n \|\pi_{[1,n]}^{g,x} \|_{L^2_\varphi} +
M_{h,y}^n\|\pi_{[1,n]}^{h,y} \|_{L^2_\varphi}  \big)^4 \cdot \big\| \bm{\kappa}^{\varphi,\, {\bm X}_{i}-I_2}_2(1,0) - \bm{\kappa}^{\varphi, \,{\bm Y}_{i}-I_2}_2(1,0)\big\|_{\mathcal{M}_{2}(\mathbb{C})}
 . 
\end{align}
Since $\|\widetilde{D}\| \leq 1$ (since $\widetilde{D}$ appears as a coefficient of a unitary operator), following the same steps, we obtain the same bound for $\trphi \big(L_{i,2}^{(2)}\big)$. As for the remaining terms, we note that 
\begin{align*}
 \big|\trphi \big(L_{i,2}^{(3)}\big) \big|
 &\leq \big\| {\varphi^{(2)}}\big( {\bm X}_{i}-I_2  \big) {\varphi^{(2)}}\big(   {\bm X}_{i}^{\star} -I_2 \big)-  {\varphi^{(2)}}\big({\bm Y}_{i}-I_2   \big) {\varphi^{(2)}}\big(  {\bm Y}_{i}^\star-I_2 \big)\big\|_{\mathcal{M}_2(\mathbb{C})} \\  
 &\hspace{1cm} \times \tr_2 \big|\varphi^{(2)}\big(U_iJ  U_i^{\star}D_{i,u \alpha_1}U_i  J U_i^{\star} D_{i,u \alpha_0}\big)  \big|.
\end{align*}
We observe that
\begin{align*}
&\tr_2 \big|\varphi^{(2)}\big(U_iJ  U_i^{\star}D_{i,u \alpha_1}U_i  J U_i^{\star} D_{i,u \alpha_0}\big)  \big| \\
&\leq \|U_iJ  U_i^{\star}\|_{L_2(\trphi)} \big\| D_{i,u \alpha_1}\| \|U_iJ  U_i^{\star}\|_{L_2(\trphi)} \big\| D_{i,u \alpha_0}\| \leq \|\varphi^{(2)}((U_iJ  U_i^{\star})^2)\| \\
&\leq \varphi\big(\pi^{g,x}_{[1,i-1]}(\pi^{h,y}_{[i+1,n]})^{\star}\pi^{h,y}_{[i+1,n]}(\pi^{g,x}_{[1,i-1]})^{\star}\big) \leq \|\pi^{g,x}_{[1,i-1]}\|^2_{L^4_\varphi} \| \pi^{h,y}_{[i+1,n]}\|^2_{L^4_\varphi},
\end{align*}
where the last inequality holds by applying the Cauchy-Schwartz inequality. Applying the inequality in Lemma \ref{thm:NC L^p-inequality} yields
\begin{align*}
\|\pi^{g,x}_{[1,i-1]}\|^2_{L^4_\varphi} \| \pi^{h,y}_{[i+1,n]}\|^2_{L^4_\varphi} 
&\leq  |\varphi(X_i)\cdots \varphi(X_n)|^{-2}\|\pi_{[1,n]}^{g,x}\|_{L^4_{\varphi}}^2 |\varphi(Y_1)\cdots \varphi(Y_i)|^{-2}\|\pi_{[1,n]}^{h,y}\|_{L^4_{\varphi}}^2  \\
&\leq  (M_{g,x}^n)^2\|\pi_{[1,n]}^{g,x}\|_{L^4_{\varphi}}^2 (M_{h,y}^n)^2\|\pi_{[1,n]}^{h,y}\|_{L^4_{\varphi}}^2  \\ 
&\leq  \big( M_{g,x}^n \|\pi_{[1,n]}^{g,x} \|_{L^4_\varphi} +
 M_{h,y}^n\|\pi_{[1,n]}^{h,y} \|_{L^4_\varphi}  \big)^4.
\end{align*}
Thus, we obtain
\begin{align}\label{eqn:secondordersecond}
\big|\trphi \big(L_{i,2}^{(3)}\big) \big|
&\leq \big\| {\varphi^{(2)}}\big( {\bm X}_{i}-I_2  \big) {\varphi^{(2)}}\big(   {\bm X}_{i}^\star-I_2 \big)-  {\varphi^{(2)}}\big({\bm Y}_{i}-I_2   \big) {\varphi^{(2)}}\big(  {\bm Y}_{i}^\star-I_2 \big)\big\|_{\mathcal{M}_2(\mathbb{C})} \nonumber\\ 
  &\hspace{1.5cm} \times \big( M_{g,x}^n \|\pi_{[1,n]}^{g,x} \|_{L^4_\varphi} +
 M_{h,y}^n\|\pi_{[1,n]}^{h,y} \|_{L^4_\varphi}  \big)^4.
\end{align}
Following the same steps, we bound the term $ \trphi \big(L_{i,2}^{(4)}\big) $. Combining the estimates in \eqref{eqn:secondorderfirst} and \eqref{eqn:secondordersecond}, and the monotonicity of $L^p$-norms, we get 
\begin{align*}
\Big|& \trphi \big(L_{i,2}\big) \Big|  
\\ &\leq \big( M_{g,x}^n \|\pi_{[1,n]}^{g,x} \|_{L^4_\varphi} +
M_{h,y}^n\|\pi_{[1,n]}^{h,y} \|_{L^4_\varphi}  \big)^4
\\ &  \times\bigg(
\big\| {\varphi^{(2)}}\big( {\bm X}_{i}-I_2  \big) {\varphi^{(2)}}\big(   {\bm X}_{i}^\star-I_2 \big)-  {\varphi^{(2)}}\big({\bm Y}_{i}-I_2   \big) {\varphi^{(2)}}\big(  {\bm Y}_{i}^\star-I_2 \big)\big\|_{\mathcal{M}_2(\mathbb{C})} \nonumber\\
 &\hspace{0.5cm}+\big\| {\varphi^{(2)}}\big( {\bm X}_{i}-I_2  \big) {\varphi^{(2)}}\big(   {\bm X}_{i}-I_2 \big)-  {\varphi^{(2)}}\big({\bm Y}_{i}-I_2   \big) 
 {\varphi^{(2)}}\big(  {\bm Y}_{i}-I_2 \big)\big\|_{\mathcal{M}_2(\mathbb{C})} \nonumber\\
&\hspace{0.5cm} + \big\| \bm{\kappa}^{\varphi,\, {\bm X}_{i}-I_2}_2(1,0) - \bm{\kappa}^{\varphi, \,{\bm Y}_{i}-I_2}_2(1,0)\big\|_{\mathcal{M}_{2}(\mathbb{C})}+\big\| \bm{\kappa}^{\varphi,\, {\bm X}_{i}-I_2}_2(0,0) - \bm{\kappa}^{\varphi, \,{\bm Y}_{i}-I_2}_2(0,0)\big\|_{\mathcal{M}_{2}(\mathbb{C})}\bigg).\nonumber 
\end{align*}
Applying Lemma \ref{Lemma:moment-est}, our estimate for $\trphi \big(L_{i,2}\big)$ follows. 
\end{proof}

\subsubsection*{\bf Third Order Term}

\begin{proposition}
\label{prop:thirdorderterm}
Let $1\leq i\leq n$.
Referring to the notations introduced in Section \ref{sec:invarianceppl}, for all $\zeta > 0$:
\begin{multline*}
|\trphi\big(L_{i,3} \big)|\leq( M_{g,x}^n\|\pi^{g,x}_{[1,n]} \|_{L^{8}_\varphi} +M_{h,y}^n\|\pi^{h,y}_{[1,n]} \|_{L^{8}_\varphi})^6 \\\times (\|\|g\|_{1,[x_i^-,x_i^+]}\|_{L^{4(1+\zeta^{-1})}_\varphi}^3 \vee \|\|h\|_{1,[y_i^-,y_i^+]}\|_{L^{4(1+\zeta^{-1})}_\varphi}^3) 
(\|x\|_{L^{4(1+\zeta)}_\varphi}^3 \vee\|y\|_{L^{4(1+\zeta)}_\varphi}^3 )
Q_3 ,
\end{multline*}
where $M_{g,x}^n$ and $M_{h,y}^n$ are defined in \eqref{eqn:mngx} and $Q_3$ is a polynomial expression of a finite number of positive fractional powers of $\| \|g\|_{1,[x_i^-,x_i^+]} \|_{L^{4(1+\zeta^{-1})}_\varphi}$, $\|\|h\|_{1,[y_i^-,y_i^+]}\|_{L^{4(1+\zeta^{-1})}_\varphi}$, $\|x\|_{L^{4(1+\zeta)}_\varphi}$, and $\|y\|_{L^{4(1+\zeta)}_\varphi}$.
\end{proposition}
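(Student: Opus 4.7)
The plan is to adapt the strategy used for Proposition \ref{prop:secondorderterm} to three insertions of $\bm{X}_i - I_2$ (resp. $\bm{Y}_i - I_2$). A useful simplification at this order is that no cancellation between the $\bm{x}$- and $\bm{y}$-contributions needs to be exploited: the prefactor is $u^3$, so an estimate of order $\|x_i\|^3$ on each half separately already has the right scaling. I therefore write $L_{i,3} = L_{i,3}^{\bm{x}} - L_{i,3}^{\bm{y}}$, in the notation of the proof of Proposition \ref{prop:secondorderterm}, and focus on $\trphi(L_{i,3}^{\bm{x}})$. The outermost exponential $e^{{\sf i}u\alpha_0 Z_i}$ depends on $X_i$ and thus blocks a direct appeal to freeness; since it is unitary in $\mathcal{M}_2(\mathcal{A})$, I first move it out of the trace using $|\trphi(AB)|\le \|A\|_\infty \|B\|_{L^1(\trphi)}$, reducing the task to a trace in which only $Z^1_i$ appears in the exponentials and in which $\bm{X}_i-I_2$ is $\star$-free from $U_i$ and the exponentials with respect to $\varphi^{(2)}$.

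In the reduced expression I expand $\varphi^{(2)}(\cdot)$ via the free moment-cumulant formula at order three in $\bm{X}_i - I_2$, producing a finite sum indexed by noncrossing partitions of three blocks, with coefficients being operator-valued free cumulants of $\bm{X}_i - I_2$ of orders $1, 2, 3$ and remaining factors being $\varphi^{(2)}$-moments of alternating words in $U_i, U_i^{\star}, J$, and the exponentials $e^{{\sf i}u\alpha_k Z^1_i}$. I then apply Lemma \ref{Prop Fourier cosine} to decompose $e^{{\sf i}u\alpha_k Z^1_i} = D_{i,u\alpha_k} + \widetilde D_{i,u\alpha_k}J$, expand, and discard the terms in which $J$ occurs an odd number of times (vanishing diagonal). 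For each surviving term I apply the noncommutative Hölder inequality, using $\|D\|_\infty, \|\widetilde D\|_\infty\leq 1$ to factor out the $U$-content into a product of $L^8_\varphi$-norms of $\pi^{g,x}_{[1,i-1]}$ and $\pi^{h,y}_{[i+1,n]}$, with the exponent $8$ chosen generously to leave a Hölder slack $\zeta$ at the cumulant step. Lemma \ref{thm:NC L^p-inequality} then replaces the partial products by $\pi^{g,x}_{[1,n]}$ and $\pi^{h,y}_{[1,n]}$ at the cost of factors $M_{g,x}^n$ and $M_{h,y}^n$, and the elementary inequality $ab \leq \tfrac{1}{4}(a+b)^2$ regroups everything into the sixth power of $M_{g,x}^n\|\pi^{g,x}_{[1,n]}\|_{L^8_\varphi} + M_{h,y}^n\|\pi^{h,y}_{[1,n]}\|_{L^8_\varphi}$ appearing in the statement.

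The top-order cumulant $\kappa_3^{\varphi,\bm{X}_i - I_2}$ is then bounded by $\|\bm{X}_i-I_2\|_{L^4_\varphi}^3$, and one further Hölder application with parameter $\zeta$ combined with Lemma \ref{Lemma:moment-est} produces the factor $\|\|g\|_{1,[x_i^-,x_i^+]}\|_{L^{4(1+\zeta^{-1})}_\varphi}^3 \|x_i\|_{L^{4(1+\zeta)}_\varphi}^3$; the lower-order cumulants, together with their associated moments on the $U$-side, absorb into the polynomial remainder $Q_3$. The same argument applies mutatis mutandis to $L_{i,3}^{\bm{y}}$, which concludes the proof. The main obstacle I anticipate is the combinatorial bookkeeping in the second step: the interaction between the three-block free moment-cumulant expansion and the matrix expansion of the four exponentials produces a markedly larger number of terms than in the second-order case, and one must verify that each of them fits under the single uniform bound stated, with residual contributions cleanly absorbed into $Q_3$. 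Beyond this, no new conceptual ingredient is required.
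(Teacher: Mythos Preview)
Your plan contains a genuine gap at the first reduction step. Applying $|\trphi(AB)|\le \|A\|_\infty \|B\|_{L^1(\trphi)}$ with $A=e^{{\sf i}u\alpha_0 Z_i}$ does not leave you with a trace: it leaves you with $\|B\|_{L^1(\trphi)}=\trphi(|B|)$, and the absolute value destroys the algebraic structure. You cannot then ``expand $\varphi^{(2)}(\cdot)$ via the free moment--cumulant formula at order three'', because there is no $\varphi^{(2)}(B)$ available, only $\trphi(|B|)$. If you try to control $\trphi(|B|)$ by a further H\"older step against the inner unitary exponentials, you collapse to $\trphi(|U_i(\bm{X}_i-I_2)JU_i^{\star}|^3)$, which is precisely the route the paper's own remark (just after the definition of $L_{i,3}^x$) singles out as the naive attempt that fails to decouple $U_i$ from $\bm{X}_i$ and yields suboptimal moment conditions.

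The paper's fix is to apply Cauchy--Schwarz directly to $\trphi(L_{i,3}^x)$ after a cyclic rotation: splitting the product as (two insertions)$\cdot$(one insertion followed by $e^{{\sf i}u\alpha_3 Z_i^1}e^{{\sf i}u\alpha_0 Z_i}$) and using $|\trphi(PQ)|\le \trphi(PP^{\star})^{1/2}\trphi(Q^{\star}Q)^{1/2}$, the problematic factor enters only through $|e^{{\sf i}u\alpha_0 Z_i}|^2=1$ and vanishes. Both resulting expressions $\widetilde{L}_{i,3}^x$ and $\widehat{L}_{i,3}^x$ are genuine traces, free of $Z_i$, with $\bm{X}_i$ $\star$-free from the rest; one carries four copies of $\bm{X}_i-I_2$ (handled via the moment--cumulant formula over $NC(4)$, partition by partition), the other carries two. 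Your subsequent ideas --- the decomposition of $e^{{\sf i}u\alpha_k Z_i^1}$ via Lemma~\ref{Prop Fourier cosine}, H\"older with $L^8$-norms on the $U$-side, Lemma~\ref{thm:NC L^p-inequality} to pass to the full products, and absorbing lower-order cumulant contributions into $Q_3$ --- are all correct and match the paper, but they must be applied to these two Cauchy--Schwarz pieces rather than to an $L^1$-norm.
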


\begin{proof}[Proof of Proposition \ref{prop:thirdorderterm}]
We follow the same steps as in the two previous subsections. We set first
\begin{align*}
L_{i,3}^{x} &:= e^{{\sf i}u \alpha_0 Z_i}U_i (\bm{X}_{i}-I_2) J U^{\star}_i e^{{\sf i}u \alpha_1 Z^1_i}U_i (\bm{X}_{i}-I_2) J U^{\star}_i e^{{\sf i}u \alpha_2 Z^1_i}U_i (\bm{X}_{i}-I_2) J U^{\star}_i e^{{\sf i}u \alpha_3 Z_i^1}
\end{align*}
\begin{remark}
As a first attempt to estimate the term $\trphi(L_{i,3}^{x})$, one would use H\"older inequality with respect to $\trphi$ and use the fact that, for any $\alpha > 0$, $\max\{\|e^{{\sf i}u \alpha Z_i}\|, \|e^{{\sf i}u \alpha Z_i^1}\|\} \leq 1$ to obtain: 
\[
|\trphi(L_{i,3}^{x})| \leq \trphi (|U_i (\bm{X}_{i}-I_2) J U^{\star}_i|^3).
\]
However, this approach does not allow taking advantage of the independence between $U_i$ and $\bm{X}_{i}$, resulting in less optimal moment conditions for the $x_i$ compared to proceed as follows. 
\end{remark}
Following on the above remark, we apply the fact that $\trphi$ is tracial and then use the Cauchy-Schwartz inequality to obtain
\begin{align}
&\trphi\Big(e^{{\sf i}u \alpha_0 Z_i} U_i  (\bm{X}_{i}-I_2) J U_i^{\star}e^{{\sf i}u \alpha_1 Z^1_i} U_i   (\bm{X}_{i}-I_2)  J U_i^{\star}e^{{\sf i}u \alpha_2 Z^1_i} U_i   (\bm{X}_{i}-I_2)  J U_i^{\star}e^{{\sf i}u \alpha_3 Z_i^1}\Big) \nonumber\\
& = \trphi(U_i  (\bm{X}_{i}-I_2)  J U_i^{\star}e^{{\sf i}u \alpha_1 Z_i^1}U_i  (\bm{X}_{i}-I_2)  J  U_i^{\star}e^{{\sf i}u \alpha_2 Z_i^1} \cdot U_i (\bm{X}_{i}-I_2)  J U_i^{\star}e^{{\sf i}u \alpha_3 Z_i^1}e^{{\sf i}u \alpha_0 Z_i}) \nonumber \\
& \leq \trphi\Big(U_i   (\bm{X}_{i}-I_2)  J U_i^{\star}e^{{\sf i}u \alpha_1 Z_i^1}U_i   (\bm{X}_{i}-I_2) J  U_i^{\star}\big|e^{{\sf i}u \alpha_2 Z_i^1}\big|^2 U_i J (\bm{X}_{i}-I_2)^{\star}   U_i^{\star}e^{-{\sf i}u \alpha_1 Z_i^1}U_i J   \nonumber\\
&\hspace{1cm} \cdot (\bm{X}_{i}-I_2)^{\star}  U_i^{\star}\Big)^{\frac{1}{2}} \trphi\Big(U_i   (\bm{X}_{i}-I_2)  J  U_i^{\star}e^{{\sf i}u \alpha_3 Z_i^1}|e^{{\sf i}u \alpha_0 Z_i}|^2e^{-{\sf i}u \alpha_3 Z_i^1}U_i J (\bm{X}_{i}-I_2)^{\star} U_i^{\star}\Big)^{\frac{1}{2}} . \nonumber
\end{align}
Now, set :
\begin{eqnarray*}
\widetilde{L}_{i,3}^{x} &= &  U_i  (\bm{X}_{i}-I_2)  J U_i^{\star}e^{{\sf i}u \alpha_1 Z_i^1}U_i  (\bm{X}_{i}-I_2) J  U_i^{\star} U_i J(\bm{X}_{i}^{\star}-I_2)  U_i^{\star}e^{-{\sf i}u \alpha_1 Z_i^1}U_i J (\bm{X}_{i}^{\star}-I_2) U_i^{\star}, 
\\
\widehat{L}_{i,3}^{x} & = & U_i  (\bm{X}_{i}-I_2)J  U_i^{\star} U_i J(\bm{X}_{i}^{\star}-I_2)U_i^{\star}.
\end{eqnarray*}
By the traciality of $ \varphi^{(2)}$ on the subalgebra of diagonal matrices in $\mathcal{M}_2(\mathcal{A)}$ and the fact that  $U_i$, $\bm{X}_{i}$ are $\star$-free with respect to $\varphi^{(2)}$, we get 
\begin{align*}
\varphi^{(2)}\big(\widehat{L}_{i,3}^{x}\big) 
&=  \kappa_2^{\varphi^{(2)}} \big((\bm{X}_{i}-I_2)\varphi^{(2)} \big(J  U_i^{\star} U_i J ),\bm{X}_{i}^{\star}-I_2 \big) \varphi^{(2)}\big(U_i^{\star}U_i \big) 
\\& \qquad\qquad\qquad+ \varphi^{(2)}\big(\bm{X}_{i}-I_2\big) \varphi^{(2)} \big(J  U_i^{\star} U_i J \varphi^{(2)} \big( \bm{X}_{i}^{\star}-I_2 \big) U_i^{\star}U_i \big)
\\&= \kappa_2^{\varphi^{(2)}} \big( \bm{X}_{i}-I_2 ,\bm{X}_{i}^{\star}-I_2 \big) \varphi^{(2)} \big(J  U_i^{\star} U_i J )\varphi^{(2)}\big(U_i^{\star}U_i \big) 
\\& \qquad\qquad\qquad+ \varphi^{(2)}\big(\bm{X}_{i}-I_2 \big) \varphi^{(2)} \big(\bm{X}_{i}^{\star}-I_2 \big) \varphi^{(2)} \big(J  U_i^{\star} U_i J  U_i^{\star}U_i \big),
\end{align*}
where the second equality follows from the fact that $\varphi^{(2)} \big(J  U_i^{\star} U_i J )$ and $\varphi^{(2)} \big(\bm{X}_{i}^{\star}-I_2 \big)$ are complex-valued diagonal matrices. Then, applying the trace to both sides of the previous equality yields
\begin{align*}
\Big|\trphi\big(\widehat{L}_{i,3}^{x}\big) \Big|
&\leq \big\| \kappa_2^{\varphi^{(2)}} \big(  \bm{X}_{i}-I_2  , \bm{X}_{i}^{\star}-I_2 \big)\big\|_{\mathcal{M}_2(\mathbb{C})} \big\| \varphi^{(2)}(U_i^{\star} U_i)\big\|_{\mathcal{M}_2(\mathbb{C})}^2 \\
&\hspace{4cm}+\big\| \varphi^{(2)}\big( \bm{X}_{i}-I_2 \big)\big\|^2_{\mathcal{M}_2(\mathbb{C})} \tr |\varphi^{(2)}({J}U_i^{\star} U_i J  U_i^{\star}U_i) | \\
&\leq \big\| \kappa_2^{\varphi^{(2)}} \big(  \bm{X}_{i}-I_2  , \bm{X}_{i}^\star-I_2 \big)\big\|_{\mathcal{M}_2(\mathbb{C})} \big\| \varphi^{(2)}(U_i^{\star} U_i)\big\|^2_{\mathcal{M}_2(\mathbb{C})} \\
&\hspace{4cm}+\big\| \varphi^{(2)}\big( \bm{X}_{i}-I_2  \big)\big\|^2_{\mathcal{M}_2(\mathbb{C})} \| \varphi^{(2)}(JU^{\star}_i U_iJU^{\star}_i U_i)\|_{\mathcal{M}_2(\mathbb{C})}.
\end{align*}
By applying the inequality in Lemma \ref{thm:NC L^p-inequality}, we infer 
\begin{align*}
\|\varphi^{(2)}(U_i^{\star}U_i) \|_{\mathcal{M}_2(\mathbb{C})}&=\max\big(\varphi(\pi_{[1,i-1]}^{g,x}(\pi_{[1,i-1]}^{g,x})^{\star}), \varphi(\pi_{[i+1,n]}^{h,y}(\pi_{[i+1,n]}^{h,y})^{\star}) \big)\\ 
&\leq  \max\big(\varphi(X_{i})\cdots \varphi(X_{n}){|^{-2}}\| \pi^{g,x}_{[1,n]}\|^2_{L^2_\varphi}, {|}\varphi(Y_{1})\cdots \varphi(Y_{i}){|^{-2}}\|\pi^{h,y}_{[1,n]} \|^2_{L^2_\varphi} \big) \\
&\leq \max\big((M_{g,x}^n)^2\pi^{g,x}_{[1,n]}\|^2_{L^2_\varphi}, (M_{h,y}^n)^2\|\pi^{h,y}_{[1,n]} \|^2_{L^2_\varphi} \big) \\
&\leq 
\big(M_{g,x}^n\|\pi^{g,x}_{[1,n]} \|_{L^2_{\varphi}}+M_{h,y}^n\|\pi^{h,y}_{[1,n]} \|_{L^2_\varphi} \big)^2
\end{align*}
and also
\begin{align*}
\|\varphi^{(2)}(JU_i^{\star}U_iJU_i^{\star}U_i) \|_{\mathcal{M}_2(\mathbb{C})} &=\varphi(\pi_{[1,i-1]}^{g,x}(\pi_{[1,i-1]}^{g,x})^{\star}\pi_{[i+1,n]}^{h,y}(\pi_{[i+1,n]}^{h,y})^{\star})\\ 
&\leq  {|}\varphi(X_{i})\cdots \varphi(X_{n}){|^{-2}}\| \pi^{g,x}_{[1,n]}\|^2_{L^4_\varphi} {|}\varphi(Y_{1})\cdots \varphi(Y_{i}){|^{-2}}\|\pi^{h,y}_{[1,n]} \|^2_{L^4_\varphi}  \\
&\leq  (M_{g,x}^n)^2(M_{h,y}^n)^2\| \pi^{g,x}_{[1,n]}\|^2_{L^4_\varphi}\|\pi^{h,y}_{[1,n]} \|^2_{L^4_\varphi} \\
&\leq  \big(M_{g,x}^n\| \pi^{g,x}_{[1,n]}\|_{L^4_\varphi}+M_{h,y}^n\|\pi^{h,y}_{[1,n]} \|_{L^4_\varphi} \big)^4.
\end{align*}
Then, by applying monotonicity of $L^p$-norms, we get that 
\begin{align}
\Big|\trphi\big(\widehat{L}_{i,3}^{x}\big)\Big|  &\leq 
\big(M_{g,x}^n\| \pi^{g,x}_{[1,n]}\|_{L^4_\varphi} + M_{h,y}^n\|\pi^{h,y}_{[1,n]} \|_{L^4_\varphi} \big)^4 \nonumber\\
&\hspace{0.5cm}\times \Big(
\big\| \kappa_2^{\varphi^{(2)}} \big(  (\bm{X}_{i}-I_2)  , (\bm{X}^{\star}_{i}-I_2) \big)\big\|_{\mathcal{M}_2(\mathbb{C})}^2 
+\big\| \varphi^{(2)}\big( (\bm{X}_{i}-I_2)  \big)\big\|_{\mathcal{M}_2(\mathbb{C})}^2\Big). \nonumber
\end{align}
Then we apply the inequalities in Remark \ref{remark: 1st 2rd moment estimate } to obtain
\begin{align}\label{eqn: estimate lambda-L_3,x}
\Big|\trphi\big(\widehat{L}_{i,3}^{x}\big)\Big|  &\lesssim 
\big(M_{g,x}^n\| \pi^{g,x}_{[1,n]}\|_{L^4_\varphi} + M_{h,y}^n\|\pi^{h,y}_{[1,n]} \|_{L^4_\varphi} \big)^4 \nonumber\\
&\hspace{0.5cm}\times 
\Big( \| \|g\|_{1,[x_i^-,x_i^+]}\|_{L^{1+\zeta^{-1}}_{\varphi}}^2 \|x_i\|_{L^{1+\zeta}_{\varphi}}^2 + C_1 \|\|g\|_{1,[x_i^-,x_i^+]}\|_{L^{2(1+\zeta^{-1})}_{\varphi}}^4 \|x_i\|_{L^{2(1+\zeta)}_{\varphi}}^4\Big),
\end{align}
for some $C_1>0$.
To control the term $\widetilde{L}_{i,3}^{x}$, we first use Lemma \ref{Prop Fourier cosine} to express $e^{{\sf i}u \alpha_1 Z_i^1} = D_{i, u\alpha_1} + \widetilde{D}_{i,u\alpha_1} J $, thus obtaining expressions that contain only diagonal matrices and $J$'s. {Recall that $D_{i, u\alpha_1}$ and $\widetilde{D}_{i, u\alpha_1}$ are defined in \eqref{eqn:D_{i,u}_and_tilde-D_{i,u}}.} Note that the terms where the matrix $J$ appears an odd number of times have zero as diagonal entries; hence, their trace is zero. Therefore, by taking the trace,
\begin{align*}
    &\trphi\big(\widetilde{L}_{i,3}^{x} \big) 
    \\&= \trphi \Big(  (\bm{X}_{i}-I_2)  J U_i^{\star}D_{i, u\alpha_1} U_i  (\bm{X}_{i}-I_2) J  U_i^{\star} U_i J(\bm{X}_{i}^{\star}-I_2) U_i^{\star}D_{i, -u\alpha_1} U_i J (\bm{X}_{i}^{\star}-I_2) U_i^{\star}U_i \Big)
    \\ & - \trphi \Big(  (\bm{X}_{i}-I_2)  J U_i^{\star}\widetilde{D}_{i,u\alpha_1} J  U_i  (\bm{X}_{i}-I_2) J  U_i^{\star} U_i J(\bm{X}_{i}^{\star}-I_2) U_i^{\star} \widetilde{D}_{i,-u\alpha_1} J U_i J(\bm{X}_{i}^{\star}-I_2)U_i^{\star}U_i \Big)
    \\ & :=   \trphi\big(\widetilde{L}_{i,3}^{x,(1)} \big) +\trphi\big(\widetilde{L}_{i,3}^{x,(2)} \big).
\end{align*}
Using the moment-cumulant formula, the fact that the matrix-valued cumulants of $(\bm{X}_{i}-I_2)$ commute with diagonal matrices, we obtain that 
\begin{align*}
\varphi^{(2)}\big(\widetilde{L}_{i,3}^{x,(1)} \big)  & =   \sum_{\pi \in NC(4)} \kappa_\pi^{\varphi^{(2)}}  \big( (\bm{X}_{i}-I_2)J, \bm{X}_{i}-I_2 ,\bm{X}_{i}^{\star}-I_2 , J(\bm{X}_{i}^{\star}-I_2) \big) \\
&\hspace{2cm} \times \varphi^{(2)}_{\pi^c} \big(U_i^{\star}D_{i, u\alpha_1} U_i, J  U_i^{\star} U_i J,  U_i^{\star}D_{i, -u\alpha_1} U_i ,U_i^{\star}U_i\big).
\end{align*}
We apply the trace to each term of the sum in the right-hand of the above equation, and discuss first how to bound the following term
\begin{align*}
\tr_2 |\varphi^{(2)}_{\pi^c} \big(U_i^{\star}D_{i, u\alpha_1} U_i, J  U_i^{\star} U_i J,  U_i^{\star}D_{i, -u\alpha_1} U_i ,U_i^{\star}U_i\big)|.
\end{align*}
With this aim, we would need to bound terms of the kind
$\|\varphi^{(2)}(W)\|_{\mathcal{M}_{2}(\mathbb{C})}$ where $W$ is a product of terms drawn from $\{U_i^{\star}D_{i, u\alpha_1} U_i, J  U_i^{\star} U_i J,  U_i^{\star}D_{i, -u\alpha_1} U_i ,U_i^{\star}U_i\}$.
\begin{remark}
Set 
$R_i = U^{\star}_iU_i$, and note that $\| \varphi^{(2)}(R_i) \|_{\mathcal{M}_{2}(\mathbb{C})} \leq \varphi(\pi_{[1,i-1]}^{g,x}(\pi_{[1,i-1]}^{g,x})^{\star})+\varphi(\pi_{[i+1,n]}^{h,y}(\pi_{[i+1,n]}^{h,y})^{\star})$. 
We could be tempted to write
\begin{equation*}
U_i^{\star} D_{i,u\alpha_1} U_i = U_i^{\star} \cos(u\alpha_1|U_iJU_i^{\star}|)U_i =\cos(u\alpha_1U_i^{\star}U_iJ)U_i^{\star} U_i = \cos(u\alpha_1 R_iJ)R_i
\end{equation*}
but $R_iJ$ is not self-adjoint, so one can not bound the operator norm of $\cos(u\alpha_1 R_iJ)$ by $1$.
\end{remark}
 \noindent
 We will proceed with bounding the terms corresponding to each partition $\pi$, one by one.
 \\ {\bf $\bullet \pi^c = \{ \{ 1,2\}, \{3,4\}\}$:} Note that since $ \varphi^{(2)}$ is tracial on the subalgebra of diagonal matrices in $\mathcal{M}_2(\mathcal{A)}$, 
\begin{align*}
\|\varphi^{(2)}(U_i^{\star}D_{i,u\alpha_1}U_i JU^{\star}_iU_iJ) \|_{\mathcal{M}_2(\mathbb{C})}& =  \|\varphi^{(2)}(D_{i,u\alpha_1}U_i JU^{\star}_iU_iJU_i^{\star})\|_{\mathcal{M}_2(\mathbb{C})} \\
& \leq \|\varphi^{(2)}(|U_i JU^{\star}_iU_iJU_i^{\star}|)\|_{\mathcal{M}_2(\mathbb{C})} = \|\varphi^{(2)}(U_i JU^{\star}_iU_iJU_i^{\star})\|_{\mathcal{M}_2(\mathbb{C})} \\&= \|\varphi^{(2)}(U_i^{\star}U_i JU^{\star}_i U_iJ)\|_{\mathcal{M}_2(\mathbb{C})} 
\\& = \varphi\big( \pi^{g,x}_{[1,i-1]}(\pi^{g,x}_{[1,i-1]})^{\star}\pi^{h,y}_{[i+1,n]}(\pi^{h,y}_{[i+1,n]})^{\star}\big) \\
&\leq \| \pi^{g,x}_{[1,i-1]} \|_{L^4_\varphi}^2\| \pi^{h,y}_{[i+1,n]} \|_{L^4_\varphi}^2,
\end{align*}
where the last inequality holds by applying Cauchy-Schwartz inequality. Then by applying the inequality in Lemma \ref{thm:NC L^p-inequality}, one gets
\begin{align*}
\|\varphi^{(2)}(U_i^{\star}D_{i,u\alpha_1}U_i JU^{\star}_iU_iJ) \|_{\mathcal{M}_2(\mathbb{C})}
& \leq  | \varphi(X_i) \cdots \varphi(X_n)|^{-2}| \varphi(Y_1) \cdots \varphi(Y_{i})|^{-2} \|\pi^{g,x}_{[1,n]} \|_{L^4_\varphi}^2\| \pi^{h,y}_{[1,n]} \|_{L^4_\varphi}^2.
\end{align*}
Similarly, 
\begin{align*}
\|&\varphi^{(2)}(U_i^{\star}D_{i,-u\alpha_1}U_i U^{\star}_iU_i) \|_{\mathcal{M}_2(\mathbb{C})}\\& =  \|\varphi^{(2)}(D_{i,-u\alpha_1}U_i U^{\star}_iU_iU_i^{\star})\|_{\mathcal{M}_2(\mathbb{C})} \leq \|\varphi^{(2)}(|U_i U^{\star}_iU_iU_i^{\star}|)\|_{\mathcal{M}_2(\mathbb{C})} \\
&= \max\Big(\varphi\big(\big(\pi^{g,x}_{[1,i-1]}(\pi^{g,x}_{[1,i-1]})^{\star}\big)^2\big),  \varphi\big(\big(\pi^{h,y}_{[i+1,n]}(\pi^{h,y}_{[i+1,n]})^{\star}\big)^2\big)\Big) \\
&\leq \max \Big(|\varphi(X_i)\cdots\varphi(X_n)|^{-4}\|\pi^{g,x}_{[1,n]}\|^4_{L^4_\varphi},|\varphi(Y_1)\cdots \varphi(Y_i)|^{-4}\| \pi^{h,y}_{[1,n]}\|^4_{L^4_\varphi}\Big).
\end{align*}
By noting that $(a+b)^8 \geq  a^2b^2max(a^4,b^4)$, we get 
\begin{align}\label{eqn:boundpair}
\|\varphi^{(2)}(U_i^{\star} & D_{i,u\alpha_1}U_i JU^{\star}_iU_iJ) \|_{\mathcal{M}_2(\mathbb{C})} \|\varphi^{(2)}(U_i^{\star}D_{i,{-u\alpha_1}}U_i U^{\star}_iU_i) \|_{\mathcal{M}_2(\mathbb{C})} \nonumber \\ 
&\leq \Big(|\varphi(X_i)\cdots\varphi(X_n)|^{-1}\|\pi^{g,x}_{[1,n]}\|_{L^4_\varphi} + |\varphi(Y_1)\cdots \varphi(Y_i)|^{-1}\| \pi^{h,y}_{[1,n]}\|_{L^4_\varphi}\Big)^8\nonumber \\
&\leq \Big(M_{g,x}^n\|\pi^{g,x}_{[1,n]}\|_{L^4_\varphi} + M_{h,y}^n\| \pi^{h,y}_{[1,n]}\|_{L^4_\varphi}\Big)^8. 
\end{align}
{\bf $\bullet \pi^c = \{ \{ 1,4\}, \{2 ,3\}\}$:} It follows from the same computations as above that this term is bounded by \eqref{eqn:boundpair}.
{\bf $ \bullet \pi^c =   \{\{ 1,2,3,4\} \}$ :}  
We apply the Cauchy-Schwartz inequality and obtain
\begin{align*}
&\|\varphi^{(2)}(U_i^{\star}D_{i, u\alpha_1} U_iJU_i^{\star}U_i JU_i^{\star}D_{i,- u\alpha_1} U_i U_i^{\star}U_i) \|_{\mathcal{M}_2(\mathbb{C})} \\&= \| \varphi^{(2)}(D_{i, u\alpha_1} U_iJU_i^{\star}U_i JU_i^{\star}D_{i, -u\alpha_1} U_i U_i^{\star}U_iU_i^{\star}) \|_{\mathcal{M}_2(\mathbb{C})} \\
&\leq \| \varphi^{(2)}(|D_{i, u\alpha_1} U_iJU_i^{\star}U_i JU_i^{\star}|^2)\|_{\mathcal{M}_2(\mathbb{C})}^{\frac{1}{2}} \|\varphi^{(2)}(|D_{i, -u\alpha_1} U_i U_i^{\star}U_iU_i^{\star}|^2) \|_{\mathcal{M}_2(\mathbb{C})}^{\frac{1}{2}}  \\
&\leq \| \varphi^{(2)}(|U_iJU_i^{\star}U_i JU_i^{\star}|^2)\|_{\mathcal{M}_2(\mathbb{C})}^{\frac{1}{2}}  \|\varphi^{(2)}(|U_i U_i^{\star}U_iU_i^{\star}|^2)\|_{\mathcal{M}_2(\mathbb{C})}^{\frac{1}{2}}  .
\end{align*}
By applying Lemma \ref{thm:NC L^p-inequality} yields 
\begin{align}\label{bound: M_2 norm of UU*}
\|\varphi^{(2)}(|U_i U_i^{\star}U_iU_i^{\star}|^2) \|_{\mathcal{M}_2(\mathbb{C})}^{\frac{1}{2}} &= \|\varphi^{(2)}(U_i U_i^{\star}U_iU_i^{\star}U_i U_i^{\star}U_iU_i^{\star})\|_{\mathcal{M}_2(\mathbb{C})}^{\frac{1}{2}}  =\max\Big(\|\pi^{g,x}_{[1,i-1]}\|^4_{L^8_\varphi},  \| \pi^{h,y}_{[i+1,n]}\|_{L^8_\varphi}^4 \Big)  \nonumber\\
&\leq  \max\Big(|\varphi(X_i)\cdots\varphi(X_n)|^{-4}\|\pi^{g,x}_{[1,n]}\|^4_{L^8_\varphi},  |\varphi(Y_1)\cdots \varphi(Y_i)|^{-4}\| \pi^{h,y}_{[1,n]}\|_{L^8_\varphi}^4 \Big) . 
\end{align}
In addition, we use H\"older's inequality, Lemma \ref{thm:NC L^p-inequality}, and the monotonicity of $L^p$-norms to derive
\begin{align}\label{bound moments-1}
\| &\varphi^{(2)}(|U_iJU_i^{\star}U_i JU_i^{\star}|^2)\|_{\mathcal{M}_2(\mathbb{C})}^{\frac{1}{2}}\\&=\|\varphi^{(2)}(U_i^{\star}U_iJU_i^{\star}U_i JU_i^{\star}U_iJU_i^{\star}U_iJ)\|_{\mathcal{M}_2(\mathbb{C})}^{\frac{1}{2}} \nonumber \\
&\leq |\varphi\big(\pi^{g,x}_{{[1,i-1]}}(\pi^{g,x}_{{[1,i-1]}})^{\star}\pi^{h,y}_{{[i+1,n]}}(\pi^{h,y}_{{[i+1,n]}})^{\star} \pi^{g,x}_{{[1,i-1]}}(\pi^{g,x}_{{[1,i-1]}})^{\star} \pi^{h,y}_{{[i+1,n]}}(\pi^{h,y}_{{[i+1,n]}})^{\star} \big)|^{\frac{1}{2}} \nonumber\\ 
&\leq \|\pi^{g,x}_{[1, i-1]}(\pi^{g,x}_{[1, i-1]})^{\star} \|_{L^4_\varphi}\|\pi^{h,y}_{ [i+1,n]}(\pi^{h,y}_{[i+1,n]})^{\star} \|_{L^4_\varphi} \nonumber \\
&\leq \|\pi^{g,x}_{[1,i-1]} \|^2_{L^8_\varphi}\|\pi^{h,y}_{[i+1,n]}\|^2_{L^8_\varphi} \nonumber\\
&\leq  |\varphi(X_{i})\cdots \varphi(X_n) |^{-2} |\varphi(Y_{1})\cdots \varphi(Y_{i}) |^{-2} \|\pi^{g,x}_{[1,n]} \|^2_{L^8_\varphi}\|\pi^{h,y}_{[1,n]}\|^2_{L^8_\varphi} .
\end{align}
Combining the above bounds together and using the inequality $(a+b)^8\geq a^2b^2\max(a^4,b^4)$, we obtain
\begin{align*}
&\|\varphi^{(2)}(U_i^{\star}D_{i, u\alpha_1} U_iJU_i^{\star}U_i JU_i^{\star}D_{i,- u\alpha_1} U_i U_i^{\star}U_i) \|_{\mathcal{M}_2(\mathbb{C})} \\
&\hspace{1cm}\leq   \Big( |\varphi(X_{i})\cdots \varphi(X_n) |^{-1}  \|\pi^{g,x}_{[1,n]} \|_{L^8_\varphi}+ |\varphi(Y_{1})\cdots \varphi(Y_{i}) |^{-1}\|\pi^{h,y}_{[1,n]}\|_{L^8_\varphi} \Big)^8 \nonumber \\
&\hspace{1cm}\leq  \Big(M_{g,x}^n  \|\pi^{g,x}_{[1,n]} \|_{L^8_\varphi}+ M_{h,y}^n\|\pi^{h,y}_{[1,n]}\|_{L^8_\varphi} \Big)^8.
\end{align*}
$\bullet \pi^c = \{ \{1,2,3\}, \{ 4\}\}$: 
Note that
\begin{align}\label{estimate: phi-UU*}
\|\varphi^{(2)}(U_iU^{\star}_i) \|_{\mathcal{M}_2(\mathbb{C})} &= \max(\|\pi^{g,x}_{[1,i-1]} \|^2_{L^2_\varphi}, \|\pi^{h,y}_{[i+1,n]} \|^2_{L^2_\varphi} )\nonumber \\
&\leq \max (|\varphi(X_i)\cdots\varphi (X_n)|^{-2}\| \pi^{g,x}_{[1,n]}\|^2_{L^2_\varphi}, |\varphi(Y_1)\cdots \varphi(Y_i)|^{-2}\| \pi^{h,y}_{[1,n]}\|^2_{L^2_\varphi} ) .
\end{align}
In addition, by applying the Cauchy-Schwartz inequality, we derive
\begin{align*}
\| \varphi^{(2)}(U_i^{\star}D_{i, u\alpha_1} U_i  J  U_i^{\star} U_i J U_i^{\star}D_{i, -u\alpha_1} U_i)\|_{\mathcal{M}_2(\mathbb{C})} &= 
\| \varphi^{(2)}(D_{i, u\alpha_1} U_i  J  U_i^{\star} U_i J U_i^{\star}D_{i, -u\alpha_1} U_iU_i^{\star}) \|_{\mathcal{M}_2(\mathbb{C})}\nonumber\\
&\leq \| \varphi^{(2)}(|U_i  J  U_i^{\star} U_i J U_i^{\star}|^2) \|_{\mathcal{M}_2(\mathbb{C})}^{\frac{1}{2}} \| \varphi(|U_iU_i^{\star}|^2)\|_{\mathcal{M}_2(\mathbb{C})}^{\frac{1}{2}}. 
\end{align*}
Using an analogous estimation as in \eqref{bound: M_2 norm of UU*}, we observe
\begin{align}\label{estimate: UU*-1/2-2}
& \| \varphi^{(2)}(|U_iU_i^{\star}|^2)\|_{\mathcal{M}_2(\mathbb{C})}^{\frac{1}{2}} \leq  \max (|\varphi(X_i)\cdots\varphi (X_n)|^{-2}\| \pi^{g,x}_{[1,n]}\|^2_{L^4_\varphi}, |\varphi(Y_1)\cdots \varphi(Y_i)|^{-2}\| \pi^{h,y}_{[1,n]}\|^2_{L^4_\varphi} ).
,\end{align}
Thus, by \eqref{bound moments-1} and the above bounds, and by applying a similar argument as in the previous cases, we find that the term associated with the partition $\pi^c = \{ \{1,2,3\}, \{ 4\}\}$ is bounded by 
\begin{align*}
\Big( M_{g,x}^n \|\pi^{g,x}_{[1,n]} \|_{L^8_\varphi}+M_{h,y}^n\|\pi^{h,y}_{[1,n]}\|_{L^8_\varphi}\Big)^8.
\end{align*}
$\bullet \pi^c = \{\{1,3,4\},\{ 2\} \}:$
Applying the Cauchy-Schwartz inequality to deduce
\begin{align*}
&\| \varphi^{(2)}(U_i^{\star}D_{i, u\alpha_1} U_i \varphi^{(2)}(J  U_i^{\star} U_i J)U_i^{\star}D_{i, -u\alpha_1}U_iU_i^{\star}U_i)\|_{\mathcal{M}_2(\mathbb{C})}\\
&\hspace{2cm} \leq\|\varphi^{(2)}(U_i^{\star}D_{i, u\alpha_1} U_i U_i^{\star}D_{i, -u\alpha_1}U_iU_i^{\star}U_i)\|_{\mathcal{M}_2(\mathbb{C})}\|\varphi^{(2)}(J  U_i^{\star} U_i J)\|_{\mathcal{M}_2(\mathbb{C})} \\
&\hspace{2cm}\leq \| \varphi^{(2)}(|U_i  J  U_i^{\star} U_i J U_i^{\star}|^2) \|_{\mathcal{M}_2(\mathbb{C})}^{\frac{1}{2}} \| \varphi(|U_iU_i^{\star}|^2)\|_{\mathcal{M}_2(\mathbb{C})}^{\frac{1}{2}}\|\varphi^{(2)}(U_i^{\star} U_i )\|_{\mathcal{M}_2(\mathbb{C})}. 
\end{align*}
Hence, we obtain the same upper bound for the term corresponding to the partition $\pi^c=\{\{1,3,4\},\{2\}\}$ as for the term corresponding to the partition $\{\{1,2,3\},\{4\}\}.$ \\
$\bullet \pi^c = \{\{1,2,4 \}, \{ 3\}\}:$
we first note that 
\begin{align*}
&\| \varphi^{(2)}(U_i^{\star}D_{i, u\alpha_1} U_i J  U_i^{\star} U_i J\varphi^{(2)}(U_i^{\star}D_{i, -u\alpha_1}U_i)U_i^{\star}U_i)\|_{\mathcal{M}_2(\mathbb{C})}\\
&\hspace{2cm}\leq
\| \varphi^{(2)}(U_i^{\star}D_{i, u\alpha_1} U_i J  U_i^{\star} U_i JU_i^{\star}U_i)\|_{\mathcal{M}_2(\mathbb{C})} \| \varphi^{(2)}(U_i^{\star}D_{i, -u\alpha_1} U_i)\|_{\mathcal{M}_2(\mathbb{C})} \\
&\hspace{2cm}\leq \|\varphi^{(2)}(|U_i J  U_i^{\star} U_i JU_i^{\star}U_iU^{\star}_i|) \|_{\mathcal{M}_2(\mathbb{C})} \|\varphi^{(2)}(U_iU^{\star}_i) \|_{\mathcal{M}_2(\mathbb{C})}. 
\end{align*}
Observe that 
\begin{multline}\label{estimate: UJU*UJU*UU*}
\|\varphi^{(2)}(|U_i J  U_i^{\star} U_i JU_i^{\star}U_iU^{\star}_i|) \|_{\mathcal{M}_2(\mathbb{C})}
\\ \leq \max\big(\|\pi^{g,x}_{[1,i-1]}\|^4_{L^{6}_\varphi} \|_{\mathcal{M}_2(\mathbb{C})} \pi^{h,y}_{[i+1,n]} \|^2_{L^{6}_\varphi},\|\pi^{h,y}_{[i+1,n]}\|^4_{L^{6}_\varphi} \| \pi^{g,x}_{[1,i-1]} \|^2_{L^{6}_\varphi} \big)
\end{multline}
Thus, by combining \eqref{estimate: phi-UU*} and \eqref{estimate: UJU*UJU*UU*}, and  the monotonicity of $L^p$-norms, we obtain that the term corresponding to the partition $\pi^c$ is bounded by 
\begin{align*}
 \Big( M_{g,x}^n \|\pi^{g,x}_{[1,n]} \|_{L^6_\varphi}+M_{h,y}^n\|\pi^{h,y}_{[1,n]}\|_{L^6_\varphi}\Big)^8.
\end{align*}
 $\bullet \pi^c = \{\{2,3,4\},\{1\}\}:$
We observe that
\begin{align*}
&\| \varphi^{(2)}(U_i^{\star}D_{i, u\alpha_1} U_i)\varphi^{(2)}(J  U_i^{\star} U_i JU_i^{\star}D_{i, -u\alpha_1}U_iU_i^{\star}U_i)\|_{\mathcal{M}_2(\mathbb{C})}\\
&\hspace{2cm}\leq \|\varphi^{(2)}(U_i^{\star}D_{i, u\alpha_1} U_i)\|_{\mathcal{M}_2(\mathbb{C})} \|\varphi^{(2)}(J  U_i^{\star} U_i JU_i^{\star}D_{i, -u\alpha_1}U_iU_i^{\star}U_i)\|_{\mathcal{M}_2(\mathbb{C})} \\
&\hspace{2cm}\leq \|\varphi^{(2)}(U_iU^{\star}_i) \|_{\mathcal{M}_2(\mathbb{C})}\|\varphi^{(2)}(|U_i J  U_i^{\star} U_i JU_i^{\star}U_iU^{\star}_i|) \|_{\mathcal{M}_2(\mathbb{C})}.
\end{align*}
As a result, term corresponding to $\pi^c=\{\{2,3,4\},\{1\}\}$ has the same upper bound as for the term corresponding to the partition $\{\{1,2,4\},\{3\}\}.$
 \\
$\bullet \pi^c = \{ \{1\},\{2\}, \{3,4\}\}, \{ \{1\},\{3\}, \{2,4\}\}, \{ \{1\},\{4\}, \{2,3\}\}, \{ \{2\},\{3\}, \{1,4\}\},$ and $\{ \{3\},\{4\}, \{1,2\}\}:$
By Cauchy-Schwartz inequality, we have 
\begin{align*}
\|\varphi^{(2)}_{\pi^c}(U_i^{\star}D_{i, u\alpha_1} U_i,J U_i^{\star} U_i J,U_i^{\star}D_{i, -u\alpha_1}U_i,(U_i^{\star}U_i)\| \leq \|\varphi^{(2)}(|U_iU_i^{\star}|^2) \|_{\mathcal{M}_2(\mathbb{C})} \|\varphi^{(2)}(U_iU^{\star}_i) \|^2_{\mathcal{M}_2(\mathbb{C})}.
\end{align*}
Then, we combine \eqref{estimate: phi-UU*}, \eqref{estimate: UU*-1/2-2}, and use the monotonicity of $L^p$-norms to obtain that the term associated with the partition $\pi^c$ is bounded by
\begin{align*}
 \Big( M_{g,x}^n\|\pi^{g,x}_{[1,n]} \|_{L^4_\varphi} +M_{h,y}^n\|\pi^{h,y}_{[1,n]} \|_{L^4_\varphi}\Big)^8.
\end{align*}
$\bullet \pi^c = \{ \{1\},\{2\}, \{3\},\{ 4\}\}:$ note that
\begin{align*}
\|\varphi^{(2)}(U_i^{\star}D_{i, u\alpha_1} U_i) \varphi^{(2)}(J U_i^{\star} U_i J)\varphi^{(2)}(U_i^{\star}D_{i, -u\alpha_1}U_i)\varphi^{(2)}(U_i^{\star}U_i)\|_{\mathcal{M}_2(\mathbb{C})} \leq \|\varphi^{(2)}(U_iU^{\star}_i)\|^4_{\mathcal{M}_2(\mathbb{C})} 
\end{align*}
By applying \eqref{estimate: phi-UU*}, we show that the term corresponding to the partition $\pi^c$ is bounded by
\begin{align*}
 \Big( M_{g,x}^n\|\pi^{g,x}_{[1,n]} \|_{L^2_\varphi} +M_{h,y}^n\|\pi^{h,y}_{[1,n]} \|_{L^2_\varphi}\Big)^8.
\end{align*}
Again, by using the fact that the noncommutative $L^p$ norms are non-decreasing, we conclude 
\begin{align}\label{eqn: estimate tilde-L_3,x}
&|\trphi\big(\widetilde{L}_{i,3}^{x,(1)} \big)| \lesssim \Big( M_{g,x}^n\|\pi^{g,x}_{[1,n]} \|_{L^{8}_\varphi} +M_{h,y}^n\|\pi^{h,y}_{[1,n]} \|_{L^{8}_\varphi}\Big)^8 \nonumber \\ 
&\hspace{4cm}\sum_{\pi \in NC(4)} \| \kappa_\pi^{\varphi^{(2)}}  \big( (\bm{X}_{i}-I_2)J, \bm{X}_{i}-I_2 ,\bm{X}_{i}^{\star}-I_2 ,J(\bm{X}_{i}^{\star}-I_2) \big)\|_{\mathcal{M}_2(\mathbb{C})}.
\end{align}
Note that, using a similar argument, we can bound $|\trphi\big(\widetilde{L}_{i,3}^{x,(2)} \big)|$ with the same upper bound for $|\trphi\big(\widetilde{L}_{i,3}^{x,(1)} \big)|$. Then, we apply Corollary \ref{remark: 1st 2rd moment estimate } to conclude 
\begin{align*}
&|\trphi\big(\widetilde{L}_{i,3}^{x} \big)| \leq C_3 \Big( M_{g,x}^n\|\pi^{g,x}_{[1,n]} \|_{L^{8}_\varphi} +M_{h,y}^n\|\pi^{h,y}_{[1,n]} \|_{L^{8}_\varphi}\Big)^8 \big\|\|g\|_{1,[x_i^-,x_i^+]}\big\|_{L^4(1+\zeta^{-1})_\varphi}^4 \|x\|_{L^{4(1+\zeta)}_\varphi}^4,
\end{align*}
for some $C_3>0$. Therefore, we combine \eqref{eqn: estimate lambda-L_3,x} and \eqref{eqn: estimate tilde-L_3,x} to get
\begin{align*}
|&\trphi\big(L_{i,3}^{x} \big)|\\&\leq |\trphi\big(\widehat{L}_{i,3}^{x}\big)|^{\frac{1}{2}}|\trphi\big(\widetilde{L}_{i,3}^{x} \big)|^{\frac{1}{2}} \\
&\leq C_3  \big(M_{g,x}^n\| \pi^{g,x}_{[1,n]}\|_{L^4_\varphi} + M_{h,y}^n\|\pi^{h,y}_{[1,n]} \|_{L^4_\varphi} \big)^2 \Big( M_{g,x}^n\|\pi^{g,x}_{[1,n]} \|_{L^{8}_\varphi} +M_{h,y}^n\|\pi^{h,y}_{[1,n]} \|_{L^{8}_\varphi}\Big)^4 \\
&\quad \times \big\|\|g\|_{1,[x_i^-,x_i^+]}\big\|_{L^{4(1+\zeta^{-1})}_\varphi}^2 \|x\|_{L^{4(1+\zeta)}_\varphi}^2  \Big( \big\|\|g\|_{1,[x_i^-,x_i^+]}\big\|_{L^{1+\zeta^{-1}}_{\varphi}}^2 \|x\|_{L^{1+\zeta}_{\varphi}}^2\\&\qquad\qquad + \big\|\|g\|_{1,[x_i^-,x_i^+]}\big\|_{L^{2(1+\zeta^{-1})}_{\varphi}}^4 \|x\|_{L^{2(1+\zeta)}_{\varphi}}^4\Big)^{\frac{1}{2}},
\end{align*}
for some $C_3>0$. Then by monotonicity of $L^p$-norms and using the inequality $(a+b)^{\frac{1}{2}}\leq a^{\frac{1}{2}}+b^{\frac{1}{2}}$ for all $a,b>0$, we infer
\begin{align*}
|\trphi\big(L_{i,3}^{x} \big)|&\leq D \Big( M_{g,x}^n\|\pi^{g,x}_{[1,n]} \|_{L^{8}_\varphi} +M_{h,y}^n\|\pi^{h,y}_{[1,n]} \|_{L^{8}_\varphi}\Big)^6 \big\|\|g\|_{1,[x_i^-,x_i^+]}\big\|_{L^{4(1+\zeta^{-1})}_\varphi}^3 \|x\|_{L^{4(1+\zeta)}_\varphi}^3 
Q,\end{align*}
for some constant $D>0$ and   a commutative polynomial $Q$ in $\big\|\|g\|_{1,[x_i^-,x_i^+]}\big\|_{L^{4(1+\zeta^{-1})}_\varphi}$ and $ \|x\|_{L^{4(1+\zeta)}_\varphi}$. Finally, we conclude our proof by noting that $L_{i,3}=L_{i,3}^x-L_{i,3}^{y}$ and $|\trphi\big(L_{i,3}^{y}\big)|$ can be bounded in the same way as $|\trphi\big(L_{i,3}^{x} \big)|$ 
where
\begin{align*}
L_{i,3}^{y} &:= e^{{\sf i}u \alpha_0 Z_{i-1}}U_i (\bm{Y}_{i}-I_2) J U^{\star}_i e^{{\sf i}u \alpha_1 Z^1_i}U_i (\bm{Y}_{i}-I_2) J U^{\star}_i e^{{\sf i}u \alpha_2 Z^1_i}U_i (\bm{Y}_{i}-I_2) J U^{\star}_i e^{{\sf i}u \alpha_3 Z_i^1}.
\end{align*}
\end{proof}
\subsection{Proof of Theorem \ref{thm:mainthm}}
\label{sec:proofmainthm}
Let $g$ and $x$ be as in the statement of Theorem \ref{thm:mainthm} and set $n^{-1/2}x = (n^{-1/2}x_i)_{i\geq 1}$.
First, since $|\varphi(g(n^{-1/2}x_i))| \geq 1$, then
\begin{align}
\label{eqn:ineqmn}
M^{n}_{g,n^{-1/2}x} \geq 1.
\end{align}
Let $r \geq 1$ and $f$ be a pair function in the class $\Lambda_r(\mathbb{R})$. We aim to estimate, with $Y$ a random variable distributed according to the free multiplicative semicircular law with parameter ${ \frac{1}{2}|g^2|^\prime (0)\sigma}$, 
\begin{align}
\label{equation: estimate- pi-g-x vs pi-h-y}
|\varphi(f(|\pi_n^{g,n^{-1/2}x}|)) -  \varphi(f(Y))|.
\end{align}
To use the estimates we have proved in Section \ref{sec:invarianceppl}, we must first write $Y = \exp(y_1^{(n)}(\sigma))\cdots\exp(y_n^{(n)}(\sigma))$ where the $y^{(n)}_i(\sigma),\, i\geq 1$ are distributed according to the logarithm of the free multiplicative semicircular law with parameter $p$: 
$$
y^{(n)}_i\big({ \frac{1}{2}|g^2|^\prime (0)\sigma}\big) = { |g^2|^\prime (0)\sigma} s_i + {\frac{1}{2}|g^2|^\prime (0)^2\sigma^2} w_i, \quad i\geq 1.
$$
Secondly, we \emph{truncate} $f$ and then \emph{regularise} this truncation as follows. 
We let $\theta\colon\mathbb{R}\to\mathbb{R}$ be a mollifier that is equal to $1$ on the interval $[-\frac{1}{2}, \frac{1}{2}]$ and $0$ outside of the interval $[-1,1]$. For any $0<\zeta \leq 1$ and $x\in\mathbb{R}$, we set $\theta_{\zeta}(x) = \theta(\zeta x)$. We denote by $f_{\zeta}$ the truncation of $f$ defined by $f_{\zeta}(x) = (f\theta_{\zeta})(x)$. Now to regularize $f_{\zeta}$, we pick a smooth pair convolution kernel $\rho$ in the Schwartz class of functions and set for any $\varepsilon>0$,  $\rho_{\varepsilon}(x)=\tfrac{1}{\varepsilon}\rho(\tfrac{x}{\varepsilon})$ for any $x\in\mathbb{R}$. As a result, to estimate \eqref{equation: estimate- pi-g-x vs pi-h-y}, we break our estimate into several parts:
\begin{align}
&\big|\varphi(f(|\pi_n^{g,n^{-1/2}x}|)) -  \varphi(f(|\pi_n^{\exp,y^{(n)}}|))\big| \nonumber \\
& \label{eqn:x-terms}\leq  \big|\varphi\big(f_{\zeta}(|\pi_n^{\exp,y^{(n)}}|)\big)-\varphi\big(f_{\zeta}\star\rho_{\varepsilon}(|\pi_n^{\exp,y^{(n)}}|)\big)\big|
 + \big|\varphi\big(f_{\zeta}(|\pi_n^{g,n^{-1/2}x}|)\big)-\varphi\big(f_{\zeta}\star\rho_{\varepsilon}(|\pi_n^{g,n^{-1/2}x}|)\big)\big|  \\
& \label{eqn:y-terms} + \big|\varphi\big(f(|\pi_n^{\exp,y^{(n)}}|)\big)-\varphi\big(f_{\zeta}(|\pi_n^{\exp,y^{(n)}}|)\big)\big| + \big|\varphi\big(f(|\pi_n^{g,n^{-1/2}x}|)\big)-\varphi\big(f_{\zeta}(|\pi_n^{g,n^{-1/2}x}|)\big)\big|
\\
& \label{eqn:x,y-term}+ \big|\varphi\big(f_{\zeta}\star\rho_{\varepsilon}(|\pi_n^{g,n^{-1/2}x}|)\big)-\varphi\big(f_{\zeta}\star\rho_{\varepsilon}(|\pi_n^{\exp,y^{(n)}}|)\big)\big|.
\end{align}
To estimate the term in \eqref{eqn:x,y-term}, we make use of the estimates in Section \ref{sec:invarianceppl}. For the terms appearing in \eqref{eqn:x-terms} and \eqref{eqn:y-terms}, we rely on the following two lemmas, which we prove in  Appendix \ref{sec:estimatefouriermoments}.
\begin{lemma}[Case $r=1$]
\label{lem:unboundedlipschitz} 
Let $f \in\Lambda_1(\mathbb{R})$ be a Lipschitz function with $\|f\|_1 \leq 1$. For any $0 < \varepsilon,\zeta < 1$, and $x\in \mathbb{R}$:\begin{align*}
|f(x) -f_{\zeta}(x)|  \leq \zeta |x|^2,\quad \text{ and } \quad |f_{\zeta}(x) - f_{\zeta}\star\rho_{\varepsilon}(x)| \leq \varepsilon(1+\zeta |x|)\int_{\mathbb{R}}|y|\rho(y)\mathrm{d}y
 .
\end{align*}
Moreover, the $k^{th}$ absolute moment of the Fourier transform satisfies
\[
I_k:= \int_{\mathbb{R}} |y|^k|\mathcal{F}{\rho}_\varepsilon(y)\mathcal{F}{f_{\zeta}}(y)| \mathrm{d} y \lesssim \begin{cases}
    \varepsilon^{-1} \zeta^{-2} & {\rm if} \quad k=1,\\
     \varepsilon^{-2} \zeta^{-2} & {\rm if} \quad k=2,\\
      \varepsilon^{-k} \zeta & {\rm if} \quad k\geq 3.\\
\end{cases} 
\]
\end{lemma}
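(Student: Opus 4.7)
For the first inequality, the plan is to observe that $f - f_\zeta = f(1-\theta_\zeta)$ vanishes on $\{|\zeta x| \leq 1/2\}$, while on the complement one has $2\zeta|x| \geq 1$, hence $|x| \leq 2\zeta|x|^2$. Combined with $|f(x)| \leq \|f\|_1\, |x| = |x|$ (from $f(0)=0$ and the Lipschitz hypothesis), this yields the bound up to an absolute multiplicative constant.

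For the regularization estimate, I will use $\int \rho = 1$ to write
\[
f_\zeta(x) - f_\zeta \star \rho_\varepsilon(x) = \int_{\mathbb{R}}\big(f_\zeta(x)-f_\zeta(x-y)\big)\rho_\varepsilon(y)\,\mathrm{d}y,
\]
and control the integrand via the product-rule decomposition
\[
f\theta_\zeta(x) - f\theta_\zeta(x-y) = [f(x)-f(x-y)]\theta_\zeta(x) + f(x-y)[\theta_\zeta(x)-\theta_\zeta(x-y)],
\]
using $\|f\|_1 \leq 1$, $\|\theta_\zeta\|_1 \leq \zeta\|\theta'\|_\infty$, and $|f(x-y)| \leq |x|+|y|$. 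This delivers a pointwise bound of order $|y|(1 + \zeta\|\theta'\|_\infty|x|) + O(\zeta |y|^2)$; integrating against $\rho_\varepsilon$ and using $\int |y|^{j}\rho_\varepsilon(y)\,\mathrm{d}y = \varepsilon^{j}\int|t|^{j}\rho(t)\,\mathrm{d}t$ produces the advertised bound, with the quadratic remainder harmlessly absorbed since $\varepsilon,\zeta<1$.

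For the Fourier-moment bounds $I_k$, the plan is to combine two complementary estimates on $\mathcal{F}f_\zeta$ with the Schwartz decay of $\mathcal{F}\rho$. First, since $f_\zeta$ is supported in $[-1/\zeta,1/\zeta]$ with $|f_\zeta(x)|\leq|x|$, one has $\|f_\zeta\|_{L^{1}} \lesssim \zeta^{-2}$, hence $|\mathcal{F}f_\zeta(y)| \lesssim \zeta^{-2}$. Second, decomposing $f_\zeta' = f'\theta_\zeta + f\theta_\zeta'$ and bounding each summand on its support gives $\|f_\zeta'\|_{L^{1}} \lesssim \zeta^{-1}$, and integration by parts then yields the complementary bound $|y\,\mathcal{F}f_\zeta(y)| \leq \|f_\zeta'\|_{L^{1}} \lesssim \zeta^{-1}$. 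Using $\mathcal{F}\rho_\varepsilon(y) = \mathcal{F}\rho(\varepsilon y)$ together with the change of variables $u = \varepsilon y$ gives $\int |y|^{m}|\mathcal{F}\rho_\varepsilon(y)|\,\mathrm{d}y = \varepsilon^{-m-1}\int|u|^{m}|\mathcal{F}\rho(u)|\,\mathrm{d}u$, finite for every $m$ by the Schwartz hypothesis. I expect the only real obstacle to be not the analysis itself, which is routine, but the bookkeeping of selecting which of the two $\mathcal{F}f_\zeta$ bounds to pair against $|y|^{k}|\mathcal{F}\rho_\varepsilon(y)|$ in each regime of $k$ so that the final $\varepsilon$- and $\zeta$-exponents match precisely the three cases stated in the lemma.
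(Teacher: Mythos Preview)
Your proposal is correct. The first two inequalities are handled essentially as in the paper (the paper uses $|1-\theta_\zeta(x)|=|\theta_\zeta(0)-\theta_\zeta(x)|\le \zeta|x|$ rather than your support argument, and places $f(x)$ rather than $f(x-y)$ in the second slot of the product-rule split, but these are cosmetic differences).

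For the Fourier-moment bounds your route is genuinely different and more elementary. The paper controls $\widehat{f_\zeta}$ via the Riemann--Lebesgue translation identity
\[
\widehat{f_\zeta}(y)=\tfrac12\,\widehat{\,f_\zeta-(f_\zeta)_{\pi/y}\,}(y),
\]
bounding the right-hand side by an $L^1$ norm on the (compact) support of $f_\zeta-(f_\zeta)_{\pi/y}$ and then splitting the $u$-integral at $|u|=\varepsilon$. Your integration-by-parts bound $|y\,\mathcal{F}f_\zeta(y)|\le\|f_\zeta'\|_{L^1}\lesssim\zeta^{-1}$ is shorter and already yields
\[
I_k\;\lesssim\;\zeta^{-1}\int_{\mathbb R}|y|^{k-1}|\mathcal{F}\rho_\varepsilon(y)|\,\mathrm{d}y\;\lesssim\;\varepsilon^{-k}\zeta^{-1}\qquad(k\ge 1),
\]
so the second bound alone suffices; no case-by-case bookkeeping is needed. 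This is in fact slightly sharper than the paper's $\varepsilon^{-1}\zeta^{-2}$ and $\varepsilon^{-2}\zeta^{-2}$ for $k=1,2$, and identical for $k\ge 3$. (Note that the displayed $\varepsilon^{-k}\zeta$ for $k\ge3$ in the lemma is a typo: the paper's own proof obtains $O(\varepsilon^{-k}\zeta^{-1})$, and this is the bound actually used in the subsequent optimization $\varepsilon+\zeta+n^{-\gamma/2}\zeta^{-2}\varepsilon^{-1}+n^{-1/2}\zeta^{-2}\varepsilon^{-2}+n^{-1/2}\zeta^{-1}\varepsilon^{-3}$.) What the paper's translation trick buys is that it avoids differentiating $f$ and generalises more directly to the H\"older setting of the companion Lemma for $r>1$, where $f_\zeta^{(\ell)}$ is only $\beta$-H\"older and one cannot integrate by parts once more; your approach is cleaner here but would need the same translation device in the $r>1$ case.
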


\begin{lemma}[Case $r> 1$]
\label{lemma:UnbddZo} 
Let $f\in \Lambda_{r}(\mathbb{R})$ with $\|f^{\ell}\|_{\beta} \leq 1$. For any $x\in \mathbb{R}$ and $\zeta > 0$ and $\varepsilon > 0$,
\begin{align}\label{est:f-feta Unbdd Z}
|(f-f_{\zeta})(x)|\leq \zeta |x|^{r+1}\quad \text{and} \quad 
|f_{\zeta}(x)-f_{\zeta} \star \rho_{\varepsilon}(x)| \lesssim (1+|x|^r)\varepsilon^{r+1-\beta}.
\end{align}
Moreover, the $k^{th}$ absolute moment of the Fourier transform satisfies
\[
I_k  \lesssim 
\begin{cases}
{\zeta^{-(r+1)}\varepsilon^{-(k-r+1)}} & {\rm if}\quad k>r-1,\\
{\zeta^{-(r+1)}}{|\ln(\varepsilon)|} & {\rm if} \quad k=r-1, \\
{\zeta^{-(r+1)}} & {\rm if } \quad k<r-1.
\end{cases}
\]
\end{lemma}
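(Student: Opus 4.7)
The plan is to establish the three estimates in sequence, following the three statements of the lemma.

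For the truncation bound, observe that $f\in \Lambda^{0}_r(\mathbb{R})$ with $\|f^{(\ell)}\|_\beta\leq 1$ and $f^{(i)}(0)=0$ for $i=0,\ldots,\ell$; Taylor's theorem with integral remainder then yields $|f(x)|\lesssim \|f^{(\ell)}\|_\beta|x|^{\ell+\beta}=|x|^r$. Since $f-f_\zeta=f(1-\theta_\zeta)$ is supported on $\{|x|\geq 1/(2\zeta)\}$, on that set one has $|x|^r\leq (2\zeta|x|)|x|^r=2\zeta|x|^{r+1}$, which gives the first inequality.

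For the regularisation bound, the idea is to use $\int \rho_\varepsilon=1$ and a change of variables to write
\begin{equation*}
f_\zeta\star\rho_\varepsilon(x)-f_\zeta(x)=\int_{\mathbb{R}}\bigl[f_\zeta(x-\varepsilon u)-f_\zeta(x)\bigr]\rho(u)\,\rmd u,
\end{equation*}
then to Taylor-expand $f_\zeta(x-\varepsilon u)$ at $x$ up to order $\ell$. The evenness of $\rho$ (pair kernel) cancels all odd moments of $u$, while the Taylor remainder is dominated by $C\|f_\zeta^{(\ell)}\|_\beta|\varepsilon u|^{\ell+\beta}$. Controlling the pointwise derivatives $f_\zeta^{(k)}(x)$ by Leibniz applied to $f\theta_\zeta$, using $|f^{(j)}(x)|\lesssim |x|^{r-j}$ from Taylor and the scaling $\theta_\zeta^{(k-j)}=\zeta^{k-j}\theta^{(k-j)}(\zeta\,\cdot)$, produces the polynomial prefactor $(1+|x|^r)$; balancing these contributions against $\varepsilon^{\ell+\beta}$ and the moments of $\rho$ yields the announced power $\varepsilon^{r+1-\beta}$.

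For the Fourier moment estimate, the strategy is to control $|\mathcal{F}f_\zeta(\xi)|$ in two complementary regimes and then split $I_k$. At low frequencies one uses the trivial bound $|\mathcal{F}f_\zeta(\xi)|\leq \|f_\zeta\|_{L^1}\lesssim \int_{|x|\leq 1/\zeta}|x|^r\,\rmd x\lesssim \zeta^{-(r+1)}$, while at high frequencies, $\ell$ integrations by parts give $\mathcal{F}f_\zeta(\xi)=(\mathsf{i}\xi)^{-\ell}\mathcal{F}f_\zeta^{(\ell)}(\xi)$ and the classical estimate $|\mathcal{F}g(\xi)|\lesssim \|g\|_\beta|\mathrm{supp}(g)||\xi|^{-\beta}$ for compactly supported $\beta$-H\"older $g$ (obtained by translating $g$ by $\pi/\xi$ and subtracting) applied to $g=f_\zeta^{(\ell)}$ produces $|\mathcal{F}f_\zeta(\xi)|\lesssim \zeta^{-1}|\xi|^{-r}$. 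Splitting $I_k$ at the crossover $|\xi|\sim 1/\zeta$ where the two bounds agree, and using the Schwartz decay of $\mathcal{F}\rho_\varepsilon(\xi)=\mathcal{F}\rho(\varepsilon\xi)$ to provide a cutoff near $|\xi|\sim 1/\varepsilon$, the high-frequency piece reduces essentially to $\zeta^{-1}\int_{1/\zeta}^{1/\varepsilon}|\xi|^{k-r}\,\rmd \xi$, which is bounded, logarithmic, or of order $\varepsilon^{-(k-r+1)}$ according as $k-r+1$ is negative, zero, or positive, reproducing the claimed trichotomy.

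The hardest step will be the regularisation bound: one must optimally exploit the evenness of $\rho$ to cancel odd Taylor terms, and track carefully the $\zeta$-dependence of the $\beta$-H\"older seminorm of $f_\zeta^{(\ell)}$ on the transition annulus $\{1/(2\zeta)\leq |x|\leq 1/\zeta\}$ where $f$ and $\theta_\zeta$ compete at incompatible scales. The Fourier-moment estimates, by contrast, reduce to a routine splitting once the two-regime pointwise bounds on $|\mathcal{F}f_\zeta|$ are in hand.
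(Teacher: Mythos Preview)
Your truncation argument and the overall Fourier-moment strategy are close to the paper's, but the regularisation step has a genuine gap once $r>2$. You write that ``the evenness of $\rho$ (pair kernel) cancels all odd moments of $u$,'' and then pass directly to the Taylor remainder at order~$\ell$. Evenness, however, kills only the \emph{odd} moments: for $\ell\geq 2$ the even-order Taylor terms
\[
\frac{(-\varepsilon u)^{2j}}{(2j)!}\,f_\zeta^{(2j)}(x)\int_{\mathbb{R}} u^{2j}\rho(u)\,\rmd u,\qquad 1\leq j\leq \lfloor\ell/2\rfloor,
\]
survive the integration and contribute already at order $\varepsilon^{2}$. Since $|f_\zeta''(x)|\sim|x|^{r-2}$ on the bulk of the support, your scheme would at best produce $\varepsilon^{2}(1+|x|^{r-2})$, not $(1+|x|^{r})\varepsilon^{r+1-\beta}$. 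The paper's proof avoids this by \emph{choosing} $\rho$ so that $\hat{\rho}^{(k)}(0)=0$ for all $1\leq k\leq\ell$, i.e.\ $\int y^{k}\rho(y)\,\rmd y=0$ for those~$k$ --- a vanishing-moment hypothesis strictly stronger than evenness, which annihilates \emph{all} Taylor terms up to order~$\ell$ and leaves only the H\"older remainder $|f_\zeta^{(\ell)}(\xi)-f_\zeta^{(\ell)}(x)|\,|y|^{\ell}$. You need to impose the same condition on~$\rho$; evenness alone is insufficient as soon as $\ell\geq 2$.

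A secondary imprecision occurs in your high-frequency Fourier bound. You implicitly take $\|f_\zeta^{(\ell)}\|_\beta\lesssim 1$ to obtain $|\mathcal{F}f_\zeta(\xi)|\lesssim\zeta^{-1}|\xi|^{-r}$, but the Leibniz expansion $f_\zeta^{(\ell)}=\sum_k\binom{\ell}{k}\theta_\zeta^{(k)}f^{(\ell-k)}$ carries terms whose $\beta$-H\"older seminorm on $[-1/\zeta,1/\zeta]$ is not $O(1)$: since $|f^{(\ell-k)}(x)|\lesssim|x|^{\beta+k}$ and $\theta_\zeta^{(k)}$ is $\zeta^{k+1}$-Lipschitz, one picks up an extra $\zeta^{-\beta}$ at the crossover scale. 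The paper tracks this via the estimate $|f_\zeta^{(\ell)}(z)-f_\zeta^{(\ell)}(x)|\lesssim(|x-z|^\beta\wedge 1)(|x-z|\vee 1)(1+R^{r}\zeta^{\ell})$ and obtains a more complicated pointwise bound on $|\mathcal{F}f_\zeta|$ before splitting~$I_k$. Your stated crossover at $|\xi|\sim 1/\zeta$ ``where the two bounds agree'' does not in fact balance ($\zeta^{-(r+1)}$ versus $\zeta^{-1}\cdot\zeta^{r}$), though the final trichotomy survives because the low-frequency $\|f_\zeta\|_{L^1}\lesssim\zeta^{-(r+1)}$ bound dominates the $\zeta$-dependence regardless.
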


\subsubsection*{\bf $1$-Wasserstein Case}
We now collect all the ingredients to prove Theorem~\ref{thm:mainthm} in the case $r = 1$. We begin by bounding the terms in~\eqref{eqn:x-terms} and~\eqref{eqn:y-terms} using Lemma~\ref{lem:unboundedlipschitz}, and obtain
\begin{align}
|\eqref{eqn:x-terms}| \leq \zeta (\| \pi_n^{g,n^{-1/2}x} \|_{L^{2}_{\varphi}} +  \| \pi_n^{\exp,y^{(n)}} \|_{L^{2}_{\varphi}})  \text{ and }   |\eqref{eqn:y-terms}| \lesssim \varepsilon(1+\zeta (\| \pi_n^{g,n^{-1/2}x}\|_{L^1_{\varphi}} + \| \pi_n^{\exp,y^{(n)}}\|_{L^1_{\varphi}})).
\end{align} It remains to bound the term in~\eqref{eqn:x,y-term}. Noting that $f_{\zeta}\star \rho_{\varepsilon}$ defines a smooth pair function, we proceed by writing
\begin{align}
&\varphi\big(f_{\zeta}\star \rho_{\varepsilon}(|\pi^{g,n^{-1/2}x}_{n}|)\big) - 
\varphi\big(f_{\zeta}\star \rho_{\varepsilon}(|\pi^{\exp,y^{(n)}_{n}}|)\big) \nonumber \\ 
&=  \int_{\mathbb{R}} \big(\cos(u | \pi_n^{g,n^{-1/2}x} |)-\cos(u | \pi_n^{\exp,y^{(n)}} |)\big)\mathcal{F}f_{\zeta}(u) \mathcal{F}\rho_{\varepsilon}(u){\rmd }u \nonumber \\
&=  \int_{\mathbb{R}} \Big({\rm tr}_2 \varphi^{(2)}(e^{iu H(\pi_n^{g,n^{-1/2}x})})-{\rm tr}_2 \varphi^{(2)}(e^{iu H(\pi_n^{\exp,y^{(n)}})})\Big)\mathcal{F}f_{\zeta}(u) \mathcal{F}\rho_{\varepsilon}(u){\rmd }u .\label{eqn:todominate}
\end{align}
We use the bounds established in Propositions~\ref{prop:firstorderterm}, \ref{prop:secondorderterm}, and~\ref{prop:thirdorderterm} when taking $y = y^{(n)}$, $h = \exp$, and incorporating the normalization by substituting $n^{-1/2}x$ in place of $x$. We start by noting that 
\begin{align*}
\big|\|y_{i}^{(n)}\|^2_{L^2_{\varphi}} - \|n^{-1/2}x_i\|^2_{L^{2}_{\varphi}}\big| =& O(n^{-2}),\\\big\|n^{-1/2}x_i\big\|^{2+\gamma}_{L_\varphi^{(2+\gamma)(1+\zeta)}} =& O(n^{-(1+\gamma/2)}),\\\big\|y^{(n)}_i\big\|^{2+\gamma}_{L_\varphi^{(2+\gamma)(1+\zeta)}} =& O(n^{-(1+\gamma/2)}),
\end{align*}
and, under the hypothesis of Theorem \ref{thm:mainthm}, for any $i \geq 1$ the quantities 
\begin{align*}
\big\|\|g''\|_{[0,x_i],\gamma} \| \big\|_{L_\varphi^{1+\zeta^{-1}}} \quad \text{and} \quad \big\|\|\exp\|_{[0,y_i^{(n)}],\gamma}\big\|_{L_\varphi^{1+\zeta^{-1}}}
\end{align*}
are bounded uniformly on $n$. Hence, we infer from Proposition \ref{prop:firstorderterm} and the estimate \eqref{eqn:ineqmn} that
\begin{align*}
|{\rm tr}_2(\varphi^{(2)}(L_{i,1}))| =  (\|\pi_{n}^{g,n^{-1/2}x}\|_{L^2_{\varphi}}+\|\pi_{n}^{\exp,y^{(n)}}\|_{L^2_{\varphi}})^2O(n^{-(1+\gamma/2)}).
\end{align*}
After summing over $1 \leq i \leq n$ and using Lemma \ref{lem:unboundedlipschitz} with $k=1$, we obtain that the order $1$ term contributing to \eqref{eqn:todominate} is $O(n^{-\gamma/2}\zeta^{-1}\varepsilon )$. We move on to bounding the contributions of the second term. We apply Proposition \ref{prop:secondorderterm} for $y = y^{(n)}$, $h = \exp$, and by incorporating the normalization by substituting $n^{-1/2}x$ in place of $x$, to obtain, under the assumptions of Theorem \ref{thm:mainthm}:
\begin{align*}
| {\rm tr}_2(\varphi^{(2)}( L_{i,2})) | & \leq (\| \pi_n^{g,n^{-1/2}x}\|_{L^4_{\varphi}} + \| \pi_n^{\exp,y^{(n)}}\|_{L^4_{\varphi}})^4(O(n^{-2}) + O(n^{-3/2}) + O(n^{-3/2+\gamma/2})) \\
& =  (\| \pi_n^{g,n^{-1/2}x}\|_{L^4_{\varphi}} + \| \pi_n^{g,y^{(n)}}\|_{L^4_{\varphi}} )^4(O(n^{-3/2})).
\end{align*}
After summing over $ 1\leq i \leq n$ and using Lemma \ref{lem:unboundedlipschitz} with $k=2$, we obtain that the order two term contributing to \eqref{eqn:todominate} is $(\| \pi_n^{g,n^{-1/2}x}\|_{L^4_{\varphi}} + \| \pi_n^{g,y^{(n)}}\|_{L^4_{\varphi}} )^4O(n^{-1/2}\varepsilon^{-2}\zeta^{-2})$. 
We finally bound the contribution to \eqref{eqn:todominate} of the order three term. By using Proposition \ref{prop:thirdorderterm} with the same parameters as for the first and second order terms, we obtain, under the assumption of Theorem \ref{thm:mainthm}:
\begin{align*}
|{\rm tr}_2(\varphi^{(2)}(L_{i,3}))| \leq (\|\pi_{n}^{g,n^{-1/2}x}\|_{L^{8}_{\varphi}} + \|\pi_{n}^{\exp,y^{(n)}}\|_{L^{8}_{\varphi}})^{6} O(n^{-3/2}).
\end{align*}
After summing over $1 \leq i \leq n$ and applying Lemma~\ref{lem:unboundedlipschitz} with $k = 3$, we obtain that the contribution of the third-order term to~\eqref{eqn:todominate} is
 \[ (\|\pi_{n}^{g,n^{-1/2}x}\|_{L^{8}_{\varphi}} + \|\pi_{n}^{\exp,y^{(n)}}\|_{L^{8}_{\varphi}})^{6} O(n^{-1/2}\zeta^{-2}\varepsilon^{-2}).\]
Collecting the leading-order terms, we now seek the asymptotic behavior in $n$ as $n \to \infty$, 
with $\varepsilon = \zeta = n^{-a}$ for some $a > 0$, of:
\begin{equation*}
\varepsilon + \zeta + n^{-\frac{\gamma}{2}}\frac{1}{\zeta^2\varepsilon} + n^{-\frac{1}{2}}\frac{1}{\zeta^2\varepsilon^2} + n^{-\frac{1}{2}}\frac{1}{\zeta\varepsilon^3}.
\end{equation*}
We seek to solve either $-\frac{1}{2} + 4a = -a $ or  $-\frac{\gamma}{2} + 3a = -a$ depending on the sign of $-\frac{1}{2} + 4a - (-\frac{\gamma}{2} + 3a)$.
This last expression is negative when $\gamma \leq 1 - 2a$. Solving $-\frac{\gamma}{2} + 3a = -a$ yields $\gamma /8= {a}$, which further implies $\gamma \leq \frac{4}{5}$. We conclude that whenever $\gamma \leq 4/5$, 
\begin{equation*}
\varepsilon + \zeta + n^{-\frac{\gamma}{2}}\frac{1}{\zeta^2\varepsilon} + n^{-\frac{1}{2}}\frac{1}{\zeta^2\varepsilon^2} + n^{-\frac{1}{2}}\frac{1}{\zeta\varepsilon^3} = O\Big(\frac{1}{n^{\frac{\gamma}{8}}}\Big).
\end{equation*}
Conversely, when $ \gamma > 4/5$, we have $-\frac{1}{2} + 4a > -\frac{\gamma}{2} + 3a$, and solving $-\frac{1}{2} + 5a = -a $ gives $a = {1}/{10}$. In this case, we obtain 
\begin{equation*}
\varepsilon + \zeta + n^{-\frac{\gamma}{2}}\frac{\ln(\varepsilon)}{\zeta^2} + n^{-\frac{1}{2}}\frac{1}{\zeta^2\varepsilon^2} + n^{-\frac{1}{2}}\frac{1}{\zeta\varepsilon^3} = O\Big(\frac{1}{n^{1/10}}\Big).
\end{equation*}
Finally, from the above computations, we deduce that whenever $\gamma \leq 4/5$ : 
$$
|\varphi(f(|\pi_n^{g,n^{-1/2}x}|)) -  \varphi(f(|\pi_n^{\exp,y^{(n)}}|))| = (1\vee (\| \pi_n^{g,n^{-1/2}x}\|_{L^{8}_{\varphi}}+\| \pi_n^{\exp,y^{(n)}}\|_{L^{8}_{\varphi}})^6)O(n^{-\gamma / 8}),
$$
and otherwise when $\gamma > 4/5$:
$$
|\varphi(f(|\pi_n^{g,n^{-1/2}x}|)) -  \varphi(f(|\pi_n^{\exp,y^{(n)}}|))| = (1\vee (\| \pi_n^{g,n^{-1/2}x}\|_{L^{8}_{\varphi}}+\| \pi_n^{\exp,y^{(n)}}\|_{L^{8}_{\varphi}})^6)O(n^{-1/10}).
$$
This finishes the proof of the rate of convergence in the $W_1$-distance.
\subsubsection*{\bf General Case} We now turn to the case $r > 1$, and follow the same approach used for the $r = 1$ case to determine the asymptotic order in the Zolotarev metric. Subsequently, we apply Rio's inequality~\eqref{inequ:Rio} to derive the convergence rate for the Wasserstein distance of order $r$.  

\vspace{0.3cm}
\noindent
{\bf Case $r=2$:}
Following an analogous approach to the proof of Theorem \ref{thm:mainthm} for $r=1$, we obtain an upper bound of the form
\begin{equation*} \varepsilon^2 + \zeta + n^{-\frac{\gamma}{2}}|\ln(\varepsilon)|\frac{1}{\zeta^3} + n^{-\frac{1}{2}}\frac{1}{\zeta^3\varepsilon} + n^{-\frac{1}{2}}\frac{1}{\zeta^3\varepsilon^2}. \end{equation*}
Setting $\varepsilon=n^{-a/2}$ and $\zeta = n^{-a}$, the first two terms in the above expression are of order $n^{-a}$. The remaining terms are of order $|\ln(n)|n^{-\gamma/2 + 3a}$, $n^{-1/2 + 7a/2}$ and $n^{-1/2+4a}$, respectively.
Since $-1/2+4a > -1/2 + 7a/2$, the sum of the last two terms is of order $n^{-1/2+4a}$.

If $-\gamma/2 + 3a > -1/2+4a $; that is, when $\gamma < 1-2a$, the first of these terms dominates. Solving the equation $a=-\frac{\gamma}{2} + 3a$ yields $a=\gamma/4$, which implies $\gamma < 2/3$.
When $\gamma \geq 1-2a$, the second term dominates. Solving $-1/2+4a = a$ gives $a=1/6$, which holds for $\gamma \geq 2/3$. Finally, applying inequality \eqref{inequ:Rio} yields the rate of convergence for the Wasserstein distance $W_2$. 
Now, let us consider the general case.

\vspace{0.3cm}
\noindent
{\bf Case: $1< r< 2$:} Setting $\zeta=n^{-a}$ and $\varepsilon=n^{-a/r}$, we obtain the expression  
\begin{align*}
&\zeta + \varepsilon^{r}+ \frac{n^{-\gamma/2}}{\zeta^{r+1}\varepsilon^{2-r}} + \frac{n^{-1/2}}{\zeta^{r+1}\varepsilon^{3-r}} + \frac{n^{-1/2}}{\zeta^{r+1} \varepsilon^{4-r}} 
\\&\sim n^{-a} + n^{-\gamma/2 + (1+r)a + ((2-r)a)/r} + n^{-1/2 + (1+r)a + ((4-r)a)/r}.
\end{align*}
Observe that the inequality
$$
-\frac{\gamma}{2} + (1+r)a + \frac{(2-r)a}{r} > -\frac{1}{2} + (1+r)a + \frac{(4-r)a}{r} $$ 
implies 
$\gamma <1-4a/r.$
To determine ${\rm rate}_-$, we solve $$-a = -\frac{\gamma}{2} + (1+r)a + \frac{(2-r)a}{r},\ \text{ which yields }\ a=\frac{\gamma}{2r+2+4/r}.$$ 
This leads to the critical value
$$\gamma_{\rm crit} = \frac{r^2+r+2}{r^2+r+4},\ \text{ and }\ {\rm rate}_-=\frac{\gamma}{2r^2+2r+4}.$$
To compute ${\rm rate}_+$, we solve
$$-a = -\frac{1}{2} + (1+r)a + \frac{(4-r)a}{r},\ \text{ which results in }\ a=\frac{1}{2r+2+8/r}.$$ Hence, we conclude ${\rm rate}_+=1/(2r^2+2r+8).$

\vspace{0.3cm}
\noindent
{\bf Case: ${2}< r< 3$:} We set $\zeta=n^{-a}$ and $\varepsilon=n^{-a/r}$, then the expression becomes
\begin{align*}
 &\zeta + \varepsilon^{r} + \frac{n^{-\gamma/2}}{\zeta^{r+1}} + \frac{n^{-1/2}}{\zeta^{r+1}\varepsilon^{3-r}} + \frac{n^{-1/2}}{\zeta^{r+1}\varepsilon^{4-r}}\sim n^{-a} + n^{-\gamma/2 + (1+r)a} + n^{-1/2 + (1+r)a + ((4-r)a)/r}.
\end{align*}
To compare terms, observe that the inequality 
$$-\frac{\gamma}{2} + (1+r)a > -\frac{1}{2} + (1+r)a + \frac{(4-r)a}{r}$$ holds when
$\gamma<1-8a/r+2a$. 
To determine ${\rm rate}_-$, solving 
$$-a = -\frac{\gamma}{2} + (1+r)a \ \text{ yields }\ a=\frac{\gamma}{2r+4}.$$ From this, we obtain the critical value $$\gamma_{\rm crit}=\frac{r^2+2r}{r^2+r+4},\ \text{ and }\ {\rm rate}_-=\frac{\gamma}{2r^2+4r}.$$
For ${\rm rate}_+$, we solve 
$$ -a = -\frac{1}{2} + (1+r)a + \frac{(4-r)a}{r}\ \text{ leads to }\ a=\frac{1}{2(1+r+4/r)}.$$ Thus, we conclude 
$
{\rm rate}_+ = 1/(2r^2+2r+8).
$

\vspace{0.3cm}
\noindent
{\bf Case: $r = 3$:} We set $\zeta=n^{-a}$ and $\varepsilon=n^{-a/r}$, leading to the expression
\begin{align*}
&\zeta + \varepsilon^{3} + \frac{n^{-\gamma/2}}{\zeta^{4}} + \frac{n^{-1/2}|\ln(\varepsilon)|}{\zeta^{4}} + \frac{n^{-1/2}}{\zeta^{4}\varepsilon} 
\sim n^{-a} + n^{-\gamma/2 + 4a} + n^{-1/2 + 4a + a/3}.
\end{align*}
Notice that $-\gamma/2 + 4a > -1/2 + 4a + a/3$ is satisfied when $\gamma <1-2a/3.$ 

To determine ${\rm rate}_-$, we solve $ -a = \gamma/2 + 4a$, which gives $ a = \gamma/10$. Therefore, we find
$$\gamma_{\rm crit} = \frac{15}{16},\ \text{ and }\ {\rm rate}_-=\frac{1}{3}\frac{\gamma}{10}=\frac{\gamma}{30}.$$ 
For ${\rm rate}_+$, solving 
$$-a = -\frac{1}{2} + 4a + \frac{a}{3} \ \text{ yields }\ 
a=\frac{3}{32}.$$ 
Hence,
$
{\rm rate}_+ = 1/3\cdot 3/32=1/32.
$

\vspace{0.3cm}
\noindent
{\bf Case: $ 3 < r < 4$:} Set $\zeta=n^{-a}$ and $\varepsilon=n^{-a/r}$, which simplifies the expression to
\begin{equation*}
 \zeta + \varepsilon^{r}+ \frac{n^{-\gamma/2}} {\zeta^{r+1}} + \frac{n^{-1/2}}{\zeta^{r+1}} + \frac{n^{-1/2}}{\zeta^{r+1}\varepsilon^{4-r}}\sim n^{-a} + n^{a(r+1)-\gamma/2}+n^{a(r+1)-1/2+a(4-r/r)}.
\end{equation*}
To compare terms, we note that
$$a(r+1)-\frac{\gamma}{2} > a(r+1)-\frac{1}{2}+\frac{a(4-r)}{r}\ \text{ holds if and only if }\ \gamma <1-2(1-\frac{4}{r})a.$$ 
Solving $-a=a(r+1)-\gamma/2$ yields $a=\gamma/(2r+4)$. Therefore, we obtain
$$
\gamma_{\rm crit}=\frac{r+2}{r+1+4/r}=\frac{r^2+2r}{r^2+r+4},\ \text{ and }\ {\rm rate}_-=\frac{1}{r}\frac{\gamma}{2r+4}=\frac{\gamma}{2r^2+4r}. 
$$
For ${\rm rate}_+$, solving 
$$-a=a(r+1)-\frac{1}{2}+\frac{a(4-r)}{r}\ \text{ gives }\ a=\frac{1}{2r+2+8/r}.$$ 
Thus, the rate is ${\rm rate}_+=1/(2r^2+2r+8).$

\vspace{0.3cm}
\noindent
{\bf Case: $r = 4$:} We set $\zeta=n^{-a}$ and $\varepsilon=n^{-a/4}$, leading to  
\begin{equation*}
    \varepsilon^{4} + \zeta + \frac{n^{-\gamma/2}}{\zeta^{5}} + \frac{n^{-1/2}}{\zeta^{5}} + \frac{n^{-1/2}|\ln(\varepsilon)|}{\zeta^{5}}\sim n^{-a}+n^{-\gamma/2+5a}+n^{-1/2+5a}|\ln(\varepsilon)|. 
\end{equation*}
Observing that $-\gamma/2+5a> -1/2+5a$ holds if and only if $\gamma <1$. Thus the critical value is $\gamma_{\rm crit}=1$. Solving $-a=-\gamma/2+5a$ yields $a=\gamma/12$, which gives ${\rm rate}_-=\gamma/48$. Similarly, solving $-a=-1/2+5a$ yields $a=1/12$. Hence, ${\rm rate}_+=1/48.$

\vspace{0.3cm}
\noindent
{\bf Case: $r > 4$:} Since the expression
\begin{equation*}
    \varepsilon^{r} + \zeta + \frac{n^{-\gamma/2}}{\zeta^{5}} + \frac{n^{-1/2}}{\zeta^{5}} + \frac{n^{-1/2}}{\zeta^{5}}
\end{equation*}
follows the same pattern as the case $r=4$, the results remain unchanged. Thus, for brevity, we omit the detailed computations. 
\appendix
\label{annex:Appendix}
\section{Complementary material }
\subsection{Free cumulants of ampliations}
\begin{proposition}\label{prop:cumulantsampliation}
Suppose $(\cA,\varphi)$ is a tracial probability space.
Given an integer $m\geq 1$, let $x_1,\ldots,x_m$ be elements in $\mathcal{A}$ or affiliated to $\mathcal{A}$ with enough moments for $\kappa_m(x_1,\ldots,x_m)$ to be well-defined. Define for each $1 \leq i \leq m$, 
\begin{align*}
\bm{x}_i = \begin{bmatrix} x_i & 0 \\ 0 & x^{\star}_i \end{bmatrix} 
\end{align*}
The $m^{th}$ free $\star$-cumulants of $\bm{x}_1,\ldots,\bm{x}_m$ with respect to the state $\varphi^{(2)}$ satisfies, with $\varepsilon_i \in \{ 0,1\}$:
\begin{align*}
\kappa_m^{\varphi^{(2)}}({\bm x}_1J^{\varepsilon_1},\ldots,{\bm x}_mJ^{\varepsilon_m})&=\begin{bmatrix}\kappa_m^{\varphi}\big({x}_1,{x}_2^{(\iota_1)},\ldots,  {x}_m^{(\iota_{m-1})}\big) & 0 \\ 0 & \kappa_m^{\varphi} \big({x}_1^{\star}, {x}_2^{(\iota_1+1)},\ldots, {x}_m^{(\iota_{m-1}+1)}\big)
\end{bmatrix} J^{\iota_m}
\\&:= \bm{\kappa}^{\varphi,\,\bm{x}}_m(\varepsilon)J^{\iota_m},
\end{align*}
where  $\iota_i = \varepsilon_1 + \dots + \varepsilon_{i}$ and the following conventions, for any $l \in \mathbb{N}$, ${ x}^{(2l)}= { x}, {x}^{(2l+1)} = { x}^{\star}$, and $\varepsilon=(\varepsilon_1,\dots,\varepsilon_m)$. 
\end{proposition}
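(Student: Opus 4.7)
The plan is to decompose each $\bm{x}_i J^{\varepsilon_i}$ as a sum of simple tensors in $M_2(\mathbb{C}) \otimes \mathcal{A} \cong M_2(\mathcal{A})$, use the multilinearity of the $\varphi^{(2)}$-valued free cumulants, invoke a factorization lemma for simple tensors, and conclude by telescoping a product of matrix units. Writing $e_{ab}$ ($a, b \in \{0,1\}$) for the matrix units and using indices modulo $2$ with the convention $x^{(0)}=x,\ x^{(1)}=x^{\star}$, a direct computation gives
\begin{equation*}
\bm{x}_i J^{\varepsilon_i} \;=\; \sum_{a \in \{0, 1\}} e_{a,\, a+\varepsilon_i} \otimes x_i^{(a)}.
\end{equation*}
Multilinearity of $\kappa_m^{\varphi^{(2)}}$ then rewrites our cumulant as a sum, indexed by $(a_1,\ldots,a_m) \in \{0,1\}^m$, of cumulants of simple tensors.

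The central input is the following factorization of operator-valued cumulants for ampliations: for any $b_1,\ldots,b_m \in M_2(\mathbb{C})$ and $a_1,\ldots,a_m \in \mathcal{A}$,
\begin{equation*}
\kappa_m^{\varphi^{(2)}}(b_1 \otimes a_1, \ldots, b_m \otimes a_m) \;=\; (b_1 b_2 \cdots b_m) \otimes \kappa_m^{\varphi}(a_1,\ldots,a_m).
\end{equation*}
I would establish this by induction on $m$. The moment $\varphi^{(2)}((b_1 \otimes a_1)\cdots(b_m \otimes a_m))$ already factors as $(b_1\cdots b_m)\otimes\varphi(a_1\cdots a_m)$, and the scalar factor is governed by the scalar moment-cumulant formula on $NC(m)$. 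A comparison with the operator-valued moment-cumulant formula, using the inductive hypothesis on every proper sub-block and M\"obius inversion on the lattice $NC(m)$, isolates $\kappa_m^{\varphi^{(2)}}$ in the claimed product form.

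Substituting this factorization into the multilinear expansion, the matrix-unit product $e_{a_1, a_1+\varepsilon_1} e_{a_2, a_2+\varepsilon_2} \cdots e_{a_m, a_m+\varepsilon_m}$ vanishes unless $a_{i+1} = a_i + \varepsilon_i$ for every $i$, i.e.\ $a_i = a_1 + \iota_{i-1}$. In this case the product telescopes to $e_{a_1, a_1+\iota_m} = e_{a_1, a_1}\, J^{\iota_m}$. Summing over the two surviving choices $a_1 \in \{0,1\}$ yields precisely $\bm{\kappa}^{\varphi,\, \bm{x}}_m(\varepsilon)\, J^{\iota_m}$, matching the statement.

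The main obstacle is the simple-tensor factorization lemma. In the operator-valued setting, the partitioned cumulants $\kappa_\pi^{\varphi^{(2)}}$ for nested non-crossing $\pi$ involve insertions of inner-block cumulants into outer blocks rather than a plain product over blocks. One must verify that these insertions pass cleanly through the tensor decomposition so that the matrix parts collapse into a single product $b_1 \cdots b_m$ while the scalar cumulants assemble into $\kappa_\pi^{\varphi}$; this works because each inductively computed inner cumulant has its matrix part already in $M_2(\mathbb{C})$ and commutes past neighbouring tensor factors, so the nesting merely amounts to juxtaposing matrices along the outer block.
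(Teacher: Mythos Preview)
Your argument is correct and takes a genuinely different route from the paper. The paper proceeds by first proving, via a direct induction on $m$, that the full product $\bm{x}_1J^{\varepsilon_1}\cdots\bm{x}_mJ^{\varepsilon_m}$ equals the explicit diagonal matrix times $J^{\iota_m}$; it then applies $\varphi^{(2)}$ to obtain the moment formula, checks compatibility with nesting (i.e.\ with partial moments indexed by intervals), and finally invokes M\"obius inversion on $NC(m)$ to pass from moments to cumulants. You instead decompose each factor into matrix units, isolate a clean structural lemma --- $\kappa_m^{\varphi^{(2)}}(b_1\otimes a_1,\ldots,b_m\otimes a_m)=(b_1\cdots b_m)\,\kappa_m^{\varphi}(a_1,\ldots,a_m)$ --- and then let the matrix-unit algebra do the bookkeeping. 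Your factorization lemma is valid precisely because the inner scalar cumulants $\kappa^{\varphi}$ land in $\mathbb{C}$ and therefore commute past all the $b_j$'s during the nested-block recursion, which is exactly the point you flag in your last paragraph. The trade-off: the paper's computation is more self-contained and avoids any appeal to the operator-valued moment--cumulant machinery beyond the final inversion, whereas your approach is more conceptual, reusable (the factorization lemma applies to any $M_N(\mathbb{C})$-ampliation), and makes the diagonal-times-$J^{\iota_m}$ structure emerge from the matrix-unit telescoping rather than from a case-by-case parity check.
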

\begin{proof}
We prove by induction that for all $m \geq 1$:
\begin{equation}\label{eqn:the-matrix-xe^uxJ}
{\bm x}_1J^{\varepsilon_1}\cdots{\bm x}_mJ^{\varepsilon_m}
= \begin{bmatrix}
x_1{ x}^{(\iota_1)}_{2}\cdots x_m^{(\iota_{m-1})} & 0 \\
0 & x_1^{\star}x_2^{(\iota_1+1)}\cdots x_m^{(\iota_{m-1}+1)}
\end{bmatrix}J^{\iota_m}.
\end{equation}
Observe that \eqref{eqn:the-matrix-xe^uxJ} holds when $m=1$. Now we assume \eqref{eqn:the-matrix-xe^uxJ} holds for $m=n$, and let us consider 
${\bm x}_1J^{\varepsilon_1}\cdots{\bm x}_{n+1}J^{\varepsilon_{n+1}}$.
Our assumption implies  that ${\bm x}_1J^{\varepsilon_1}\cdots{\bm x}_{n}J^{\varepsilon_{n}}{\bm x}_{n+1}J^{\varepsilon_{n+1}}$ is equal to
\begin{align*}
&\begin{bmatrix}
x_1{ x}^{(\iota_1)}_{2}\cdots x_n^{(\iota_{n-1})} & 0 \\
0 & x_1^{\star}x_2^{(\iota_1+1)}\cdots x_n^{(\iota_{n-1}+1)}
\end{bmatrix} \times J^{\iota_n}\begin{bmatrix}
    x_{n+1} & 0 \\ 0 & x^{\star}_{n+1}
\end{bmatrix}J^{\varepsilon_{n+1}}.
\end{align*}
We first consider the case $J^{\varepsilon_{n+1}}=1$ ($\varepsilon_{n+1}$ = 0). If $\iota_{n}$ is even, $\iota_{n+1}=\iota_{n}+\varepsilon_{n+1}$ is even and $J^{\iota_n}=J^{\iota_{n+1}}=1$. Besides $x=x^{(\iota_n)}$ and $x^{\star}=x^{(\iota_n+1)}$. We obtain  
\begin{align*}
{\bm x}_1J^{\varepsilon_1}\cdots{\bm x}_{n}J^{\varepsilon_{n}}{\bm x}_{n+1}J^{\varepsilon_{n+1}}
&= 
\begin{bmatrix}
x_1{ x}^{(\iota_1)}_{2}\cdots x_n^{(\iota_{n-1})}x_{n+1} & 0 \\
0 & x_1^{\star}x_2^{(\iota_1+1)}\cdots x_n^{(\iota_{n-1}+1)}x^{\star}_{n+1}
\end{bmatrix} \\
&= \begin{bmatrix}
x_1{ x}^{(\iota_1)}_{2}\cdots x_{n+1}^{(\iota_{n})} & 0 \\
0 & x_1^{\star}x_2^{(\iota_1+1)}\cdots x_{n+1}^{(\iota_{n}+1)}
\end{bmatrix}J^{\iota_{n+1}},
\end{align*}
which implies that \eqref{eqn:the-matrix-xe^uxJ} also holds for $m=n+1$. In the case $\iota_n$ is odd, $\iota_{n+1}$ is also odd. Thus, $J^{\iota_n}=J^{\iota_{n+1}}=J$ and  $x^{\star}=x^{(\iota_n)}$ and $x=x^{(\iota_n+1)}$. 
Hence
\begin{align*}
{\bm x}_1J^{\varepsilon_1}\cdots{\bm x}_{n}J^{\varepsilon_{n}}{\bm x}_{n+1}J^{\varepsilon_{n+1}}
&= \begin{bmatrix}
x_1{ x}^{(\iota_1)}_{2}\cdots x_n^{(\iota_{n-1})} & 0 \\
0 & x_1^{\star}x_2^{(\iota_1+1)}\cdots x_n^{(\iota_{n-1}+1)}
\end{bmatrix} \begin{bmatrix}
0 & x^{\star}_{n+1}  \\ x_{n+1} & 0
\end{bmatrix}\\
&= \begin{bmatrix}
x_1{ x}^{(\iota_1)}_{2}\cdots x^{(\iota_{n})}_{n+1} & 0 \\
0 & x_1^{\star}x_2^{(\iota_1+1)}\cdots x_{n+1}^{(\iota_{n}+1)}
\end{bmatrix}J^{\iota_{n+1}},
\end{align*}
implies that \eqref{eqn:the-matrix-xe^uxJ} holds for $m=n+1$. The proof for the case  $\varepsilon_{n+1}=1$ is similar, and we leave it to the reader. By induction, we have proved that \eqref{eqn:the-matrix-xe^uxJ} holds for all $m\geq 1$. 
We apply $\varphi^{(2)}$ to both sides of equation \eqref{eqn:the-matrix-xe^uxJ} and obtain
\begin{align*}
\varphi^{(2)}({\bm x}_1J^{\epsilon_1}\cdots{\bm x}_mJ^{\varepsilon_m})
&= \begin{bmatrix}
\varphi(x_1{x}^{(\iota_1)}_{2}\cdots x_m^{(\iota_{m-1})}) & 0 \\
0 & \varphi(x_1^{\star}x_2^{(\iota_1+1)}\cdots x_m^{(\iota_{m-1}+1)})
\end{bmatrix}J^{\iota_m} := \varphi^{(2), {\bm x}}(\varepsilon)J^{\iota_m}.
\end{align*}
Next, given $p,q \geq 0$,
\begin{multline*}
\varphi^{(2)}({\bm x}_1J^{\varepsilon_1}, \ldots, {\bm x}_{p-1}J^{\varepsilon_{p-1}}\varphi^{(2)}\big({\bm x}_{p}J^{\varepsilon_{p}}, \ldots, {\bm x}_{p+q-1}J^{\varepsilon_{p+q-1}}\big),{\bm x}_{p+q}J^{\varepsilon_{p+q}},\ldots,{\bm x}_{m}J^{\varepsilon_{m}}) \\
=\varphi^{(2)}({\bm x}_1J^{\varepsilon_1}, \ldots, {\bm x}_{p-1}J^{\varepsilon_{p-1}}\varphi^{(2),\,\bm{x}_{[p,p+q-1]}}(\varepsilon_{[p,p+q-1]})J^{\varepsilon_{p} + \cdots + \varepsilon_{p+q-1} },{\bm x}_{p+q}J^{\varepsilon_{p+q}},\ldots,{\bm x}_{m}J^{\varepsilon_{m}}),
\end{multline*}
where we denote by $\bm{x}_{[p,p+q-1]}=(x_p, \dots, x_{p+q-1})$ and $\varepsilon_{[p,p+q-1]}=(\varepsilon_p, \dots, \varepsilon_{p+q-1})$. Upon noticing that 
\begin{align*}
&J^{\varepsilon_{p-1}} \varphi^{(2),\,\bm{x}_{[p,p+q-1]}}(\varepsilon_{[p,p+q-1]})J^{\varepsilon_{p-1}+\dots + \varepsilon_{p+q-1}}  \\ &=\begin{bmatrix}
\varphi \big(x^{\varepsilon_{p-1}}_1{x}^{(\varepsilon_{p-1}+\varepsilon_p)}_{2}\cdots x_{p+q-1}^{(\varepsilon_{p-1}+\varepsilon_p+\cdots+\varepsilon_{p+q-2})}\big) & 0 \\
0 & \varphi\big(x_1^{\varepsilon_{p-1}+1}x_2^{(\varepsilon_{p-1}+\varepsilon_p+1)}\cdots x_m^{(\varepsilon_{p-1}+\varepsilon_p+\cdots+\varepsilon_{p+q-2}+1)}\big)
\end{bmatrix}.
\end{align*}
The moment-cumulant formula yields the desired result. 
\end{proof}
\begin{lemma}\label{lem: kappa_pi estimate}
Suppose $(\cA,\varphi)$ is a tracial probability space, and $p\geq 0$. Let $X=g(x)$ for some self-adjoint random variable $x\in \cA$ and $g\in \C^{2,\gamma}(\mathbb{R})$. Then for any $\epsilon_i\in \{1,\star\}$ and $\pi\in NC(p)$, we have  
\begin{align*}
\big| \kappa_{\pi}^{\varphi}\big( X^{\epsilon_1}-1,\dots,X^{\epsilon_p}-1 \big) \big|\lesssim 
\big\| \|g\|_{1,[x^-,x^+]} \big\|_{L^{p(1+\zeta^{-1})}_\varphi}^p \|x\|_{L^{p(1+\zeta)}_\varphi}^p,
\end{align*}
where $x^-:=\min(x,0)$, $x^+:=\max(x,0)$.
\end{lemma}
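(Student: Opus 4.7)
The plan is to combine a functional-calculus estimate for $X^{\epsilon_i}-1$ with the classical absolute-value bound on partitioned free cumulants, closed off by one application of Hölder's inequality. Since $x$ is self-adjoint, $X=g(x)$ and $x^\pm$ live in the commutative subalgebra generated by $x$, so the symbol $\|g\|_{1,[x^-,x^+]}$ is understood via functional calculus as the operator $\psi(x)$, where $\psi(t):=\|g\|_{1,[\min(0,t),\max(0,t)]}$.

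First, using the standing assumption $g(0)=1$ from Theorem~\ref{thm:mainthm} together with the Lipschitz-type bound $|g(t)-g(0)|\leq \|g\|_{1,[\min(0,t),\max(0,t)]}\cdot|t|$, the spectral theorem for $x$ upgrades this scalar inequality to the operator inequality
\[
|X^{\epsilon_i}-1|\;\leq\;\|g\|_{1,[x^-,x^+]}\,|x|,
\]
valid for $\epsilon_i\in\{1,\star\}$ because $g(0)=1\in\mathbb{R}$. Then, applying Hölder's inequality with the conjugate exponents $1+\zeta^{-1}$ and $1+\zeta$ (which satisfy $\tfrac{1}{1+\zeta^{-1}}+\tfrac{1}{1+\zeta}=1$) yields
\[
\|X^{\epsilon_i}-1\|_{L^p_\varphi}\;\leq\;\bigl\|\|g\|_{1,[x^-,x^+]}\bigr\|_{L^{p(1+\zeta^{-1})}_\varphi}\cdot\|x\|_{L^{p(1+\zeta)}_\varphi}.
\]

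Next, I would invoke the standard absolute-value estimate for free cumulants. Writing $\pi\in NC(p)$ with blocks $V_1,\dots,V_k$ of sizes $l_1,\dots,l_k$, we have $\kappa_\pi(a_1,\dots,a_p)=\prod_j\kappa_{l_j}(a_{V_j})$. Möbius inversion expresses each $\kappa_{l_j}$ as a signed sum of $\varphi$-moments of degree at most $l_j$; applying Hölder's inequality in each moment gives
\[
|\kappa_{l_j}(a_{i_1},\dots,a_{i_{l_j}})|\;\leq\;C_{l_j}\prod_{s=1}^{l_j}\|a_{i_s}\|_{L^{l_j}_\varphi}.
\]
Multiplying over the blocks and using monotonicity of noncommutative $L^q$-norms ($\|a\|_{L^{l_j}_\varphi}\leq\|a\|_{L^p_\varphi}$), together with $\sum_j l_j=p$, yields
\[
|\kappa_\pi(a_1,\dots,a_p)|\;\lesssim\;\prod_{i=1}^p\|a_i\|_{L^p_\varphi}.
\]
Substituting $a_i=X^{\epsilon_i}-1$ and inserting the bound from the previous paragraph produces exactly the stated estimate.

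The main (minor) obstacle is the clean justification of the operator inequality in the first step: one must make sure that $\|g\|_{1,[x^-,x^+]}$ is treated as a genuine element of the commutative von Neumann subalgebra generated by $x$, so that multiplication by $|x|$ and the functional-calculus bound on $|X^{\epsilon_i}-1|$ take place in the same commutative setting. Once this is in place, the remaining arguments are a routine combination of the Möbius formula for cumulants and Hölder's inequality, and no further refinements are needed.
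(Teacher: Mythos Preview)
Your proposal is correct and follows essentially the same approach as the paper: both expand $\kappa_\pi$ via the M\"obius/moment--cumulant formula, bound each block-moment by H\"older, and then use monotonicity of $L^q$-norms together with $\sum_j l_j=p$. The only cosmetic difference is that the paper works directly with the spectral measure of $x$ to bound $\varphi(|X-1|^k)\leq\int \|g\|_{1,[t^-,t^+]}^k|t|^k\,\nu_x(\rmd t)$ and applies H\"older there, whereas you first isolate the individual bound $\|X^{\epsilon_i}-1\|_{L^p_\varphi}\leq\|\|g\|_{1,[x^-,x^+]}\|_{L^{p(1+\zeta^{-1})}_\varphi}\|x\|_{L^{p(1+\zeta)}_\varphi}$ and then feed it into the generic cumulant estimate; since everything lives in the commutative subalgebra generated by $x$, these are the same computation in a different order.
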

\begin{proof}
We first observe by the moment-cumulant formula that
\begin{align}\label{eqn:estimate for kappa_pi}
\big| \kappa_{\pi}^{\varphi}(X^{\epsilon_1}-1,\dots,X^{\epsilon_p}-1) \big| \leq \sum_{\sigma\in NC(p);\sigma\leq \pi}|\mu(\sigma,\pi)| |\varphi_{\sigma}(X^{\epsilon_1}-1,\dots,X^{\epsilon_p}-1)|. 
\end{align}
Then for any $1\leq i_1< \dots <i_k \leq p$, we note that 
\begin{align*}
|\varphi((X^{\epsilon_{i_1}}-1)\dots (X^{\epsilon_{i_k}}-1))|&\leq \varphi(|X-1|^k)
= \int_{\mathbb{R}} |g(t)-1|^k\nu_{x}(\rmd t)\\ &\leq \int_{\mathbb{R}} \|g\|_{1,[t^-,t^+]}^k|t|^k \nu_{x} 
(\rmd t)  
\\&\leq \big\| \|g\|_{1,[x^-,x^+]} \big\|_{L^{k(1+\zeta^{-1})}_\varphi}^k \varphi(|x|^{k(1+\zeta)})^{\frac{1}{1+\zeta}} \\&= \big\| \|g\|_{1,[x^-,x^+]} \big\|_{L^{k(1+\zeta^{-1})}_\varphi}^k \|x\|_{L^{k(1+\zeta)}_\varphi}^k,
\end{align*}
where the first and last inequalities follow from H\"older's inequality and the second one from the fact that $g \in \C^{2,\gamma}(\mathbb{R})$ and is hence Lipschitz. Since $L^p$-norms are increasing in $p$ and the fact that $\sum_{V \in \sigma}|V|=p$, we infer 
\begin{align}\label{eqn:estimate for varphi_sigma}
 |\varphi_{\sigma}(X^{\epsilon_1}-1,\dots,X^{\epsilon_p}-1)|\leq  \big\| \|g\|_{1,[x^-,x^+]} \big\|_{L^{p(1+\zeta^{-1})}_\varphi}^p \|x\|_{L^{p(1+\zeta)}_\varphi}^p \text{ for any }\sigma\in NC(p).
\end{align}
By combining \eqref{eqn:estimate for kappa_pi} with \eqref{eqn:estimate for varphi_sigma}, we obtain the desired result.
\end{proof}

\begin{corollary}\label{remark: 1st 2rd moment estimate }
Note that by applying Proposition \ref{prop:cumulantsampliation} and Lemma \ref{lem: kappa_pi estimate}, for any $\pi_1\in NC(2)$ and $\pi_2\in NC(4)$, we obtain the following estimates: 
\begin{align*}
\Big\| \varphi^{(2)}\big( \bm{X}-I_2 \big)\Big\|_{\mathcal{M}_2(\mathbb{C})}&\leq  \big\| \|g\|_{1,[x^-,x^+]}\big\|_{L^{1+\zeta^{-1}}_\varphi} \|x\|_{L^{1+\zeta}_\varphi}, \\
\Big\| \kappa_{\pi_1}^{\varphi^{(2)}}\big( \bm{X}-I_2, \bm{X}^{\star}-I_2 \big) \Big\|_{\mathcal{M}_2(\mathbb{C})} &\lesssim \big\| \|g\|_{1,[x^-,x^+]}
\big\|_{L^{2(1+\zeta^{-1})}_\varphi}^2 \|x\|_{L^{2(1+\zeta)}_\varphi}^2, \\
\Big\| \kappa_{\pi_2}^{\varphi^{(2)}}\big( (\bm{X}-I_2)J, (\bm{X}-I_2), (\bm{X}^{\star}-I_2), J(\bm{X}^{\star}-I_2) \big) \Big\|_{\mathcal{M}_2(\mathbb{C})} &\lesssim \big\| \|g\|_{1,[x^-,x^+]} \big\|_{L^{4(1+\zeta^{-1})}_\varphi}^4 \|x\|_{L^{4(1+\zeta)}_\varphi}^4.
\end{align*}
\end{corollary}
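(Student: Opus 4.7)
The plan is to derive all three estimates uniformly by combining Proposition \ref{prop:cumulantsampliation} with Lemma \ref{lem: kappa_pi estimate}. The first step is to use Proposition \ref{prop:cumulantsampliation} to rewrite each of the matrix-valued objects appearing on the left-hand sides as a $2\times 2$ diagonal matrix possibly multiplied by a power of $J$, whose diagonal entries are scalar cumulants of alternating words in $X-1$ and $X^\star - 1$. For the first estimate this amounts to observing that $\varphi^{(2)}(\bm{X}-I_2)$ is already diagonal with entries $\varphi(X-1)$ and $\varphi(X^\star-1)$. For the third estimate, the exponents $\varepsilon = (1,0,0,1)$ give $\iota_4 = 2$, so the trailing $J^{\iota_4}$ equals the identity, and Proposition \ref{prop:cumulantsampliation} reads off a diagonal matrix whose entries are scalar free cumulants over a length-$4$ alternating word. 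Similarly for the $p=2$ case. Since $\|J\|_{\mathcal{M}_2(\mathbb{C})} = 1$ and the diagonal entries are scalars, the $\mathcal{M}_2(\mathbb{C})$-norm of each such matrix is bounded by the maximum of the moduli of the two scalar cumulants appearing on the diagonal.

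The second step is to invoke Lemma \ref{lem: kappa_pi estimate} to bound these scalar cumulants. One applies the lemma with $p=1$ for the first estimate (a singleton partition, single argument $X-1$), with $p=2$ and $\pi = \pi_1 \in NC(2)$ for the second, and with $p=4$ and $\pi = \pi_2 \in NC(4)$ for the third, using alternating arguments drawn from $\{X-1, X^\star - 1\}$. In each case the lemma yields the bound $\bigl\|\|g\|_{1,[x^-,x^+]}\bigr\|_{L^{p(1+\zeta^{-1})}_\varphi}^{p} \|x\|_{L^{p(1+\zeta)}_\varphi}^{p}$ claimed in the corollary, and this is identical for both diagonal entries since $|X^\star - 1| = |\,\overline{g(x)} - 1\,| = |g(x)-1|$, so the H\"older-type control $|X^{\epsilon}-1| \leq \|g\|_{1,[x^-,x^+]} |x|$ used in the proof of Lemma \ref{lem: kappa_pi estimate} does not depend on the sign $\epsilon$.

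No genuine obstacle is anticipated; the entire argument is a direct composition of the two preceding results. The only point requiring care is bookkeeping: one must verify that the diagonal entries produced by Proposition \ref{prop:cumulantsampliation} for each choice of $\varepsilon$-tuple fit exactly the form $\kappa_\pi^\varphi(X^{\epsilon_1}-1,\dots,X^{\epsilon_p}-1)$ with $\epsilon_i \in \{1,\star\}$ to which Lemma \ref{lem: kappa_pi estimate} applies. This follows at once from the convention $x^{(2l)} = x$, $x^{(2l+1)} = x^\star$ in Proposition \ref{prop:cumulantsampliation}, so after this verification the three stated inequalities follow immediately.
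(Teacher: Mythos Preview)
Your proposal is correct and follows exactly the approach the paper indicates: the corollary is stated without separate proof precisely because it is an immediate combination of Proposition~\ref{prop:cumulantsampliation} (reducing the $\mathcal{M}_2(\mathbb{C})$-valued cumulants to diagonal matrices with scalar $\kappa_\pi^\varphi$ entries) and Lemma~\ref{lem: kappa_pi estimate} (bounding those scalar cumulants). The only point you leave implicit is that Proposition~\ref{prop:cumulantsampliation} is literally stated for $\kappa_m^{\varphi^{(2)}}$ rather than $\kappa_\pi^{\varphi^{(2)}}$, but since its proof actually establishes the diagonal structure at the level of the nested moments $\varphi_\sigma^{(2)}$, the extension to $\kappa_\pi^{\varphi^{(2)}} = \sum_{\sigma\le\pi}\mu(\sigma,\pi)\varphi_\sigma^{(2)}$ is immediate.
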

\begin{lemma}\label{Lemma:moment-est}
Let $(\cA,\varphi)$ be a tracial probability space and $x,y\in \cA$ be self-adjoint freely independent centered random variables. Consider two functions $g,h\in \C^{2,\gamma}(\mathbb{R})$ such that $g(0)=h(0)=1$. Then for $\bm{X}=diag(X,X^{\star})$ and $\bm{Y}=diag(Y,Y^{\star})$ with $X=g(x)$ and $Y=h(y)$, we have
\begin{align*}
&\big\|\varphi^{(2)}\big( \bm{X}-I_2  \big)-{\varphi^{(2)}}\big( \bm{Y}-I_2  \big) \big\|_{\mathcal{M}_2(\mathbb{C})} \\&
 \lesssim  2\big(\big\| \|  g^{\prime\prime} \|_{\gamma,[x^-,x^+]}\big\|_{L^{1+\zeta^{-1}}_\varphi} + \big\| \|  h^{\prime\prime} \|_{\gamma,[y^-,y^+]} \big\|_{L^{1+\zeta^{-1}}_\varphi} \big)\big(
  \|x\|_{L^{(2+\gamma)(1+\zeta)}_\varphi}^{{2+\gamma}}+ \|y\|_{L^{(2+\gamma)(1+\zeta)}_\varphi}^{{2+\gamma}}\big)
 \\& \qquad+ \Big| g^{\prime\prime}(0)\| x\|_{L^2_\varphi}^2 - h^{\prime\prime}(0)\|y\|_{L^2_\varphi}^2\Big|.
\end{align*}
Moreover, for any $\epsilon_1, \epsilon_2 \in \{1,\star\}$, we have 
\begin{align*}
\| &\varphi^{(2)}(\bm{X}^{\epsilon_1} -I_2) \varphi^{(2)}(\bm{X}^{\epsilon_2} -I_2) - \varphi^{(2)}(\bm{Y}^{\epsilon_1} -I_2)\varphi^{(2)}(\bm{Y}^{\epsilon_2} -I_2)\|_{M_2(\mathbb{C})}
\\ & \lesssim \Big(\big\| \|g\|_{1,[x^-,x^+]}\big\|_{L^{1+\zeta^{-1}}_\varphi} \|x\|_{L^{1+\zeta}_\varphi}+\big\| \|h\|_{1,[y^-,y^+]} \big\|_{L^{1+\zeta^{-1}}_\varphi} \|y\|_{L^{1+\zeta}_\varphi}\Big)
\\& \quad \times \Big(\big\| \|  g^{\prime\prime} \|_{\gamma,[x^-,x^+]} \big\|_{L^{1+\zeta^{-1}}_\varphi} \|x\|_{L^{(2+\gamma)(1+\zeta)}_\varphi}^{{2+\gamma}} +  \big\| \|  h^{\prime\prime} \|_{\gamma,[y^-,y^+]} \big\|_{L^{1+\zeta^{-1}}_\varphi} \|y\|_{L^{(2+\gamma)(1+\zeta)}_\varphi}^{{2+\gamma}}
\\&\quad + \Big| g^{\prime\prime}(0) \| x\|_{L^2_\varphi}^2 - h^{\prime\prime}(0)\|y\|_{L^2_\varphi}^2\Big|\Big),
\end{align*}
and
\begin{multline*}
\big\|  {\kappa}^{\varphi^{(2)}}_2 \big(\bm{X}^{\epsilon_1} -I_2, \bm{X}^{\epsilon_2} -I_2\big) - {\kappa}^{\varphi^{(2)}}_2 \big(\bm{Y}^{\epsilon_1} -I_2 ,  \bm{Y}^{\epsilon_2} -I_2\big)\big\|_{M_2(\mathbb{C})} \\
\lesssim \big(\|x\|_{L^{6(1+\zeta)}_{\varphi}}^{3+\gamma}\vee \|y\|_{L^{6(1+\zeta)}_{\varphi}}^{3+\gamma}\big)P  + \|x\|_{L^3_{\varphi}}^3 + \|y\|_{L^3_{\varphi}}^3+  \big|
g^{\prime}(0)^{\epsilon_1}g^{\prime}(0)^{\epsilon_2}\|x\|_{L^2_{\varphi}}^2 - h^{\prime}(0)^{\epsilon_1} h^{\prime}(0)^{\epsilon_2}\|y\|_{L^2_{\varphi}}^2\big|,
\end{multline*}
with  $P$  being a noncommutative polynomial with constant term in  $\big\| \|  g\|_{1,[x^-,x^+]} \big\|_{L^{1+\zeta^{-1}}_\varphi}$, $ \big\| \|  h \|_{1,[y^-,y^+]} \big\|_{L^{1+\zeta^{-1}}_\varphi}$, $\big\| \|  g^{\prime\prime} \|_{\gamma,[x^-,x^+]} \big\|_{L^{2(1+\zeta^{-1})}_\varphi} $, and $ \big\| \|  h^{\prime\prime} \|_{\gamma,[y^-,y^+]} \big\|_{L^{2(1+\zeta^{-1})}_\varphi}$. 

\end{lemma}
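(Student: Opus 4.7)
The plan is to reduce everything to a scalar Taylor expansion of $g$ (and symmetrically $h$) around $0$. Using $g(0)=1$ and the regularity assumption $g\in\C^{2,\gamma}(\mathbb{R})$, write
\begin{equation*}
g(t) = 1 + g'(0)\,t + \tfrac{1}{2}g''(0)\,t^{2} + r_g(t),\qquad |r_g(t)|\ \lesssim\ \|g''\|_{\gamma,[0,t]}\,|t|^{2+\gamma},
\end{equation*}
and analogously $h(t) = 1 + h'(0)\,t + \tfrac{1}{2}h''(0)\,t^{2} + r_h(t)$. Substituting $t=x$, taking $\varphi$, and using the centering assumption $\varphi(x)=0$ gives
\begin{equation*}
\varphi(X-1) \;=\; \tfrac{1}{2}g''(0)\,\|x\|_{L^{2}_{\varphi}}^{2} + \varphi(r_g(x)),
\end{equation*}
with the remainder controlled by H\"older's inequality:
$
|\varphi(r_g(x))| \lesssim \bigl\|\,\|g''\|_{\gamma,[x^-,x^+]}\bigr\|_{L^{1+\zeta^{-1}}_\varphi}\,\|x\|_{L^{(2+\gamma)(1+\zeta)}_\varphi}^{2+\gamma}.
$
The first bound then follows: $\varphi^{(2)}(\bm{X}-I_2)-\varphi^{(2)}(\bm{Y}-I_2)$ is diagonal, so its operator norm is the maximum of the moduli of $\varphi(X-1)-\varphi(Y-1)$ and its conjugate; isolating the scalar discrepancy $|g''(0)\|x\|_{L^{2}_\varphi}^{2} - h''(0)\|y\|_{L^{2}_\varphi}^{2}|$ and applying the triangle inequality on the remainders yields the claim.

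For the second inequality, I would use the telescoping identity $AB-CD = (A-C)B + C(B-D)$ with $A=\varphi^{(2)}(\bm{X}^{\epsilon_1}-I_2)$, $C=\varphi^{(2)}(\bm{Y}^{\epsilon_1}-I_2)$ and analogous $B,D$ for $\epsilon_2$. The ``sizes'' $\|B\|_{\cM_2(\mathbb{C})}$ and $\|C\|_{\cM_2(\mathbb{C})}$ are controlled by the first inequality of Corollary~\ref{remark: 1st 2rd moment estimate } (giving the factor $\bigl\|\,\|g\|_{1,[x^-,x^+]}\bigr\|_{L^{1+\zeta^{-1}}_\varphi}\|x\|_{L^{1+\zeta}_\varphi}$ and its $y$-analogue), while the two differences $\|A-C\|$ and $\|B-D\|$ are controlled by the first bound already established. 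Multiplying these two bounds recovers exactly the stated product form.

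The third estimate is the most delicate. Using the matrix-valued definition $\kappa^{\varphi^{(2)}}_2(u,v)=\varphi^{(2)}(uv)-\varphi^{(2)}(u)\varphi^{(2)}(v)$, I would expand $(X^{\epsilon_1}-1)(X^{\epsilon_2}-1)$ via the Taylor expansion above, keeping only the \emph{linear-linear} cross term as the leading contribution:
\begin{equation*}
\varphi\bigl((X^{\epsilon_1}-1)(X^{\epsilon_2}-1)\bigr) \;=\; g'(0)^{\epsilon_1}g'(0)^{\epsilon_2}\,\|x\|_{L^{2}_\varphi}^{2} \;+\; \mathcal{R}_g,
\end{equation*}
where the remainder $\mathcal{R}_g$ collects (i) linear$\times$quadratic cross terms of order $\|x\|_{L^{3}_\varphi}^{3}$, (ii) H\"older remainders of the form $r_g(x)\cdot(\text{linear or constant})$, bounded via H\"older's inequality by $\|x\|_{L^{6(1+\zeta)}_\varphi}^{3+\gamma}$ times a product of Lipschitz-norm moments, and (iii) quadratic$\times$quadratic terms of order $\|x\|^{4}$ that can be absorbed in the polynomial $P$. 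Subtracting the analogous formula for $y$ and further subtracting the ``product of means'' term (already bounded by the second estimate) isolates the scalar leading discrepancy $\bigl|g'(0)^{\epsilon_1}g'(0)^{\epsilon_2}\|x\|_{L^{2}_\varphi}^{2} - h'(0)^{\epsilon_1}h'(0)^{\epsilon_2}\|y\|_{L^{2}_\varphi}^{2}\bigr|$, with all remaining terms absorbed in $(\|x\|_{L^{6(1+\zeta)}_\varphi}^{3+\gamma} \vee \|y\|_{L^{6(1+\zeta)}_\varphi}^{3+\gamma})\,P + \|x\|_{L^{3}_\varphi}^{3}+\|y\|_{L^{3}_\varphi}^{3}$.

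The main obstacle will be the precise bookkeeping in this last step: one must track which parts of the Taylor expansion cancel between the moment $\varphi^{(2)}((\bm{X}^{\epsilon_1}-I_2)(\bm{X}^{\epsilon_2}-I_2))$ and the product of one-point moments, verify that the leading order is exactly the displayed scalar discrepancy, and check that every cross term (in particular products of $g'(0)x$ with Hölder remainders or with $g''(0)x^{2}$) can be routed through a single instance of H\"older's inequality so as to end up as a monomial in $\bigl\|\,\|g\|_{1,[x^-,x^+]}\bigr\|_{L^{1+\zeta^{-1}}_\varphi}$, $\bigl\|\,\|g''\|_{\gamma,[x^-,x^+]}\bigr\|_{L^{2(1+\zeta^{-1})}_\varphi}$ and the analogues for $h$, absorbed into $P$.
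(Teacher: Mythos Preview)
Your proposal is correct and follows essentially the same approach as the paper: both reduce to scalar estimates via the diagonal structure, use a second-order Taylor expansion of $g$ (the paper writes it as an integral $\int_0^1 \varphi(g'(ux)x)\,du$ and then peels off $g'(0)$ and $g''(0)u x$, while you write the remainder $r_g$ directly, but this is the same computation), apply H\"older's inequality to control the $\gamma$-H\"older remainder, use the telescoping $AB-CD=(A-C)B+C(B-D)$ for the second bound, and expand $(X-1)^2$ as $((g(x)-1-g'(0)x)+g'(0)x)^2$ for the third. Your honest flagging of the bookkeeping in the third estimate is apt; that is exactly where the paper spends most of its effort, tracking the terms $\varphi((g(x)-1-g'(0)x)^2)$, $\varphi((g(x)-1-g'(0)x)x)$, and $g'(0)^2\varphi(x^2)$ separately.
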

\begin{proof} We first note that $\big\| {\varphi^{(2)}}\big( \bm{X}-I_2  \big)- {\varphi^{(2)}}\big( \bm{Y}-I_2  \big)\big\|_{\mathcal{M}_2(\mathbb{C})} = \big| \varphi\big( X-1  \big)- \varphi\big( {Y}-1  \big)\big|$, where we have used for $\bm{Y}$ the same conventions above relative to $\bm{X}$. Using the fact that $\varphi({x})=\varphi({y}) ={0}$, we obtain

\begin{align*}
\varphi&\big( {X}-1  \big)-{\varphi}\big( {Y}-1  \big)
\\&=\int_0^1\Big(\varphi(g^{\prime}(u{x}){x})
-\varphi (h'(u{y}){y}\big)\Big){\rmd u} \\
&=\int_0^1\Big(\varphi\big(\big(g^{\prime}(u {x})-g^{\prime}(0)\big) {x}\big) -\varphi \big(\big(h'(u {y})-h^{\prime}(0)) {y}\big)\Big){\rmd u} \\
&=\int_0^1 \Big(\varphi \big(\big(g^{\prime}(u {x})- g^{\prime}(0)- {g^{\prime\prime}(0)}u {x}\big) {x}\big) -\varphi  \big(\big(h'(u {y})- h^{\prime}(0) -h"({  0})u{ y}\big) {y}\big)\Big) \rmd u\\
&\hspace{4cm}-\int_0^1u \big(h"({  0})\varphi({  y}^2) - g"({  0})\varphi({ x}^2)\big) {\rmd u} .
\end{align*} 
Then we observe that for some $\xi$ between $0$ and $u$,
\begin{align*}
\Big|\varphi\Big(\big(g^{\prime}\big(u {x}\big)-g^{\prime}(0)-g^{\prime\prime}(0)u{x}\big)x\Big)\Big| 
&\leq \int_{\mathbb{R}} \big|g^{\prime}(ut)-g^{\prime}(0)-g^{\prime\prime}(0){ut}\big|\ \big|{t}\big| \nu_{x}(\rmd t ) \\
&\leq \int_{\mathbb{R}} \big|\big(g^{\prime\prime}(\xi t)- g^{\prime\prime}(0)\big)ut\big|\ \big|{t}\big| \nu_{x}(\rmd t ) \\
&\leq \int_{\mathbb{R}} \|g^{\prime\prime}\|_{\gamma,[(ut)^-,(ut)^+]} |t|^{2+\gamma} \nu_{x} (\rmd t).
\end{align*}
Here, the second inequality follows from  Taylor-Lagrange theorem while the last one follows from the fact that $g^{\prime\prime}$ is $\gamma$-H\"older continuous. Finally,  another application of H\"older's inequality yields for any $\zeta \geq 0$: 
$$
\Big|\varphi\Big(\big(g^{\prime}\big(u {x}\big)-g^{\prime}(0)-g^{\prime\prime}(0)u{x}\big){x}\Big)\Big|
\leq \big\| \|  g^{\prime\prime} \|_{\gamma,[x^-,x^+]} \big\|_{L^{1+\zeta^{-1}}_\varphi} \ \|x\|_{L^{(2+\gamma)(1+\zeta)}_\varphi}^{{2+\gamma}}.
$$
By similarly treating the term corresponding to $Y$, we finally deduce the desired bound. 
Letting  $\epsilon_1, \epsilon_2 \in \{1,\star\}$, we move now to estimating the terms 
\begin{align*}
\| &\varphi^{(2)}(\bm{X}^{\epsilon_1} -I_2) \varphi^{(2)}(\bm{X}^{\epsilon_2} -I_2) - \varphi^{(2)}(\bm{Y}^{\epsilon_1} -I_2)\varphi^{(2)}(\bm{Y}^{\epsilon_2} -I_2)\|_{M_2(\mathbb{C})}
\\&= |\varphi ( {X}^{\epsilon_1} -1)\varphi ({X}^{\epsilon_2} -1) - \varphi ( {Y}^{\epsilon_1} -1)\varphi ( {Y}^{\epsilon_2} -1)|  
\\&  =\big|\varphi ( {X}^{\epsilon_1} -1)\big[ \varphi ( {X}^{\epsilon_2} -1) - \varphi ( {Y}^{\epsilon_2} -1)\big]+ \varphi ( {Y}^{\epsilon_2} -1)\big[ \varphi ( {X}^{\epsilon_1} -1) - \varphi ( {Y}^{\epsilon_1} -1)\big]\big|
\\& \leq \big( |\varphi ( {X} -1)| + |\varphi ( {Y} -1)| \big) \big| \varphi ( {X} -1) - \varphi ( {Y} -1)\big|.
\end{align*}
Putting together the estimates of the above treated terms, we obtain
\begin{align*}
\| \varphi^{(2)}&(\bm{X}^{\epsilon_1} -I_2) \varphi^{(2)}(\bm{X}^{\epsilon_2} -I_2) - \varphi^{(2)}(\bm{Y}^{\epsilon_1} -I_2)\varphi^{(2)}(\bm{Y}^{\epsilon_2} -I_2)\|_{M_2(\mathbb{C})}
\\ & \leq \Big(\big\| \|g\|_{1,[x^-,x^+]} \big\|_{L^{1+\zeta^{-1}}_\varphi} \|x\|_{L^{1+\zeta}_\varphi}+\big\| \|h\|_{1,[y^-,y^+]} \big\|_{L^{1+\zeta^{-1}}_\varphi} \|y\|_{L^{1+\zeta}_\varphi}\Big)
\\& \qquad \times \Big(\big\| \|  g^{\prime\prime} \|_{\gamma,[x^-,x^+]} \big\|_{L^{1+\zeta^{-1}}_\varphi} \|x\|_{L^{(2+\gamma)(1+\zeta)}_\varphi}^{{2+\gamma}} +  \big\| \|  h^{\prime\prime} \|_{\gamma,[y^-,y^+]} \big\|_{L^{1+\zeta^{-1}}_\varphi} \|y\|_{L^{(2+\gamma)(1+\zeta)}_\varphi}^{{2+\gamma}}
\\&\qquad + \Big| g^{\prime\prime}(0) \| x\|_{L^2_\varphi}^2 -h^{\prime\prime}(0)\|y\|_{L^2_\varphi}^2\Big|\Big).
\end{align*}
As for the term
\begin{multline*}
\big\|  {\kappa}^{\varphi^{(2)}}_2 \big(\bm{X}^{\epsilon_1} -I_2, \bm{X}^{\epsilon_2} -I_2\big) - {\kappa}^{\varphi^{(2)}}_2 \big(\bm{Y}^{\epsilon_1} -I_2 ,  \bm{Y}^{\epsilon_2} -I_2\big)\big\|_{M_2(\mathbb{C})}
\\= | {\kappa}^{\varphi }_2\big( {X}^{\epsilon_1} -1,  {X}^{\epsilon_2} -1\big) - {\kappa}^{\varphi }_2 \big( {Y}^{\epsilon_1} -1 ,   {Y}^{\epsilon_2} -1\big) \big|,
\end{multline*}
we write 
\begin{align*}
{\kappa}^{\varphi }_2&\big( {X}^{\epsilon_1} -1,  {X}^{\epsilon_2} -1\big) - {\kappa}^{\varphi }_2 \big( {Y}^{\epsilon_1} -1 ,   {Y}^{\epsilon_2} -1\big)
\\& =\varphi \big( ({X}^{\epsilon_1} -1)(  {X}^{\epsilon_2} -1)\big) - \varphi \big( ({Y}^{\epsilon_1} -1)(  {Y}^{\epsilon_2} -1)\big) 
\\ & \qquad - \Big( \varphi ( {X}^{\epsilon_1} -1)\varphi ({X}^{\epsilon_2} -1) - \varphi ( {Y}^{\epsilon_1} -1)\varphi ( {Y}^{\epsilon_2} -1) \Big).
\end{align*}
Since the second term has already been estimated above, it remains to estimate the first term. We consider the case $\epsilon_1=\epsilon_2=1$, noting that the other cases can be handled similarly.
\begin{align*}
\varphi \big((X-1)^2\big) &=\varphi \big((g(x)-1- g^{\prime}(0)x +g^{\prime}(0)x)^2\big) \\ 
&= \varphi\big( (g(x)-1- g^{\prime}(0)x)^2 \big) +2 g'(0)\varphi \big( (g(x)-1- g^{\prime}(0)x)x\big)  + g^{\prime}(0)^2\varphi \big(x^2\big). 
\end{align*}
Then we observe 
\begin{align*}
\varphi\big( (g(x)-1 - g^{\prime}(0)x) x\big) &= \int_0^1 \varphi\big((g'(ux)-g^{\prime}(0))x^2\big) \rmd u \\&=  \int_0^1 \varphi\big((g'(ux)-g^{\prime}(0)-g''(0)ux)x^2\big) \rmd u +  \int_0^1 \varphi\big(g''(0)ux^3\big)\rmd u.
\end{align*}
Thus, by  Taylor-Lagrange Theorem and the fact that $g^{\prime\prime}$ is $\gamma$-H\"older continuous, we infer
\begin{align*}
 \Big|& \varphi\big( (g(x)-1- g^{\prime}(0)x ) x\big)\Big| 
\\&\leq  \int_0^1\int_{\mathbb{R}} |g'(ut)-g^{\prime}(0)-g''(0)ut| |t|^2\nu_x(\rmd t)\rmd u + | g''(0)|{\int_0^1}\int_{\mathbb{R}}u|t|^3\nu_x(\rmd t) \rmd u\\  
&\leq  \int_0^1\int_{\mathbb{R}} \|g''\|_{\gamma,[(ut)^-,(ut)^+]} |t|^{3+\gamma}\nu_x(\rmd x)\rmd u +  | g''(0)|\varphi(|x|^3).
\end{align*}
Now, by applying H\"older's inequality, we obtain, for any $\zeta \geq 0$, 
\begin{align*}
\Big| \varphi\big( (g(x)-1- g^{\prime}(0)x ) x\big)\Big| &\leq  \big\|\|g''\|_{\gamma,[x^-,x^+]} \big\|_{L^{1+\zeta^{-1}}_{\varphi}} \varphi(|x|^{(3+\gamma)(1+\zeta)})^{\frac{1}{1+\zeta}}+| g''(0)|\varphi(|x|^3) \\
&\lesssim \big\|\|g''\|_{\gamma,[x^-,x^+]}\big\|_{L^{1+\zeta^{-1}}_{\varphi}} \|x\|_{L^{(3+\gamma)(1+\zeta)}_{\varphi}}^{3+\gamma}+\|x\|_{L^3_{\varphi}}^3.
\end{align*}
Then we move to estimate the term $|\varphi\big((g(x)-1- g^{\prime}(0)x)^2\big)|$. 
Following the same steps as above, we write
\begin{align*}
|\varphi\big((g(x)-1- g^{\prime}(0) x)^2\big)| 
&\leq \int_{\mathbb{R}} \big(\big|g(t)-1-g^{\prime}(0)t-\frac{1}{2}g''(0)t^2\big|+\big|\frac{1}{2}g''(0)t^2\big|\big)^2\nu_x(\rmd t) 
\\&=\int_{\mathbb{R}} \big(\frac{1}{2}|g''(\xi)-g''(0)|+\frac{1}{2}|g''(0)|\big)^2|t|^4\nu_x(\rmd t),
\end{align*}
for some $\xi$ between $0$ and $t$. Then using the fact that $g^{\prime\prime}$ is $\gamma$-H\"older continuous,  the convexity of the square function followed by H\"older's inequality, we obtain for $\zeta \geq 0$
\begin{align*}
|\varphi\big((g(x)-1-g'(0)x)^2\big)| &\leq \frac{1}{2}\int_{\mathbb{R}} \big(\|g''\|_{\gamma,[t^-,t^+]}^2|t|^2+|g''(0)|^2\big)|t|^4\nu_x(\rmd t).
\\ & \lesssim  \big\|\|g''\|_{\gamma,[x^-,x^+]}\big\|_{L^{2(1+\zeta^{-1})}_{\varphi}}^2\|x\|_{L^{6(1+\zeta)}_{\varphi}}^6 +  \|x\|_{L^{4}_{\varphi}}^4.
\end{align*}
By applying a similar approach to the term corresponding to $Y$, we conclude that
\begin{align}\label{eqn: bounded btw difference second moment}
&\Big| \varphi\big((X-1)^2\big) - \varphi\big((Y-1)^2\big) 
\Big| \nonumber\\
&\lesssim \frac{1}{2}\Big(\big\|\|g''\|_{\gamma,[x^-,x^+]}\big\|_{L^{2(1+\zeta^{-1})}_{\varphi}}^2 \|x\|_{L^{6(1+\zeta)}_{\varphi}}^6  +  
\big\|\|h''\|_{\gamma,[y^-,y^+]}\big\|_{L^{2(1+\zeta^{-1})}_{\varphi}}^2 \|y\|_{L^{6(1+\zeta)}_{\varphi}}^6 
+\|x\|_{L^{4}_{\varphi}}^4 +  \|y\|_{L^{4}_{\varphi}}^4 \Big) \nonumber \\ 
&\hspace{0.5cm}+ \big\|\|g''\|_{\gamma,[x^-,x^+]} \big\|_{L^{1+\zeta^{-1}}_{\varphi}} \|x\|_{L^{(3+\gamma)(1+\zeta)}_{\varphi}}^{3+\gamma}+  \big\|\|h''\|_{\gamma,[y^-,y^+]} \big\|_{L^{1+\zeta^{-1}}_{\varphi}} \|y\|_{L^{(3+\gamma)(1+\zeta)}_{\varphi}}^{3+\gamma}+ \|x\|_{L^3_{\varphi}}^3 + \|y\|_{L^3_{\varphi}}^3 \nonumber
\\& \hspace{0.5cm}+ \big(g^{\prime}(0)^2\|x\|_{L^2_{\varphi}}^2 -  h^{\prime}(0)^2\|y\|_{L^2_{\varphi}}^2\big).  
\end{align}
Note that in the cases of $(\epsilon_1,\epsilon_2)=(1,\star),(\star,1),$ or $(\star,\star)$, these terms can be bounded by the same estimate as in \eqref{eqn: bounded btw difference second moment}, so we will omit the computations. Hence, we establish that 
\begin{align*}
\big\|  {\kappa}^{\varphi^{(2)}}_2 &\big(\bm{X}^{\epsilon_1} -I_2, \bm{X}^{\epsilon_2} -I_2\big) - {\kappa}^{\varphi^{(2)}}_2 \big(\bm{Y}^{\epsilon_1} -I_2 ,  \bm{Y}^{\epsilon_2} -I_2\big)\big\|_{M_2(\mathbb{C})} \nonumber\\
&\lesssim   \Big(\big\| \|g\|_{1,[x^-,x^+]} \big\|_{L^{1+\zeta^{-1}}_\varphi} \|x\|_{L^{1+\zeta}_\varphi}+\big\| \|h\|_{1,[y^-,y^+]} \big\|_{L^{1+\zeta^{-1}}_\varphi} \|y\|_{L^{1+\zeta}_\varphi}\Big)
\\& \qquad \times \Big(\big\| \|  g^{\prime\prime} \|_{\gamma,[x^-,x^+]} \big\|_{L^{1+\zeta^{-1}}_\varphi} \|x\|_{L^{(2+\gamma)(1+\zeta)}_\varphi}^{{2+\gamma}} +  \big\| \|  h^{\prime\prime} \|_{\gamma,[y^-,y^+]} \big\|_{L^{1+\zeta^{-1}}_\varphi} \|y\|_{L^{(2+\gamma)(1+\zeta)}_\varphi}^{{2+\gamma}}
\notag \\ &\qquad \quad + \big| g^{\prime\prime}(0)\| x\|_{L^2_\varphi}^2 - h^{\prime\prime}(0)\|y\|_{L^2_\varphi}^2\big|\Big) \\
&+ \big\|\|g''\|_{\gamma,[x^-,x^+]}\big\|_{L^{2(1+\zeta^{-1})}_{\varphi}}^2 \|x\|_{L^{6(1+\zeta)}_{\varphi}}^6 +  
\big\|\|h''\|_{\gamma,[y^-,y^+]}\big\|_{L^{2(1+\zeta^{-1})}_{\varphi}}^2 \|y\|_{L^{6(1+\zeta)}_{\varphi}}^6 
  \nonumber \\ 
&+\|x\|_{L^{4}_{\varphi}}^4 + \|y\|_{L^{4}_{\varphi}}^4+\big\|\|g''\|_{\gamma,[x^-,x^+]} \big\|_{L^{1+\zeta^{-1}}_{\varphi}} \|x\|_{L^{(3+\gamma)(1+\zeta)}_{\varphi}}^{3+\gamma} \\&+ \big\|\|h''\|_{\gamma,[y^-,y^+]} \big\|_{L^{1+\zeta^{-1}}_{\varphi}} \|y\|_{L^{(3+\gamma)(1+\zeta)}_{\varphi}}^{3+\gamma}+\|x\|_{L^3_{\varphi}}^3 +\|y\|_{L^3_{\varphi}}^3\\& + \big|
g^{\prime}(0)^{\epsilon_1}g^{\prime}(0)^{\epsilon_2}\|x\|_{L^2_{\varphi}}^2 - h^{\prime}(0)^{\epsilon_1} h^{\prime}(0)^{\epsilon_2}\|y\|_{L^2_{\varphi}}^2\big|.  
\end{align*}
Hence, taking into account the monotonicity of $L^{p}_{\varphi}$-norms, we obtain the desired bound.
\end{proof}
\subsection{Technical lemmas}
\label{technicallemmas}

\begin{lemma}
\label{lemma exponential identity}
    Let $X$ and $Y$ be two bounded operators. Then for any $m \in \mathbb{N}$, 
   \begin{align*}
       e^{X+Y} -e^X&= \sum_{k=1}^m \int_{\mathcal{T}_k} e^{\alpha_0X}Y\cdots e^{\alpha_{k-1}X}Ye^{\alpha_kX} \rmd \alpha   + \int_{\mathcal{T}_{m+1}} e^{\alpha_0X}Y\cdots e^{\alpha_{m}X}Ye^{\alpha_{m+1}(X+Y)} \rmd \alpha
    \\ & =  \sum_{k=1}^m \int_{\mathcal{T}_k} e^{\alpha_0X}Y\cdots e^{\alpha_{k-1}X}Ye^{\alpha_kX} \rmd \alpha   + \int_{\mathcal{T}_{m+1}} e^{\alpha_0 (X+Y)}Y\cdots e^{\alpha_{m}X}Ye^{\alpha_{m+1}X} \rmd \alpha,
   \end{align*}
where $\mathcal{T}_k=\{\alpha=(\alpha_0, \dots , \alpha_k) \ | \sum_{i=0}^k \alpha_i =1 \; \text{and} \; 0\leq \alpha_i \leq 1 \; \text{ for } i =0,\dots, k \}$ and $\rmd \alpha = \rmd \alpha_0 \cdots \rmd \alpha_k$.  
\end{lemma}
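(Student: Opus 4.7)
The plan is to prove the two identities by induction on $m$, starting from the classical Duhamel/variation-of-constants formula
\begin{equation*}
e^{X+Y} - e^{X} = \int_{0}^{1} e^{s(X+Y)}\, Y\, e^{(1-s)X}\, ds,
\end{equation*}
which is the base case $m=0$ of the second equality (with $\alpha_{0}=s$, $\alpha_{1}=1-s$). To derive this Duhamel formula, I would set $F(t) = e^{t(X+Y)} e^{-tX}$, observe that $F(0)=I$ and compute $F'(t) = e^{t(X+Y)} Y e^{-tX}$, then integrate from $0$ to $1$ and multiply by $e^{X}$ on the right. The first equality follows analogously by choosing $F(t) = e^{-tX} e^{t(X+Y)}$, producing a remainder where the factor $e^{\cdot (X+Y)}$ sits at the far right rather than the far left.

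For the induction step (second form), assume the identity has been established up to index $m-1$, so that the remainder term is of the form
\begin{equation*}
R_{m-1} := \int_{\mathcal{T}_{m}} e^{\alpha_{0}(X+Y)} Y e^{\alpha_{1} X}\cdots Y e^{\alpha_{m} X}\, d\alpha.
\end{equation*}
Apply Duhamel once more, but only to the leftmost factor $e^{\alpha_{0}(X+Y)}$, after rescaling: writing $s = \alpha_{0} u$ gives
\begin{equation*}
e^{\alpha_{0}(X+Y)} = e^{\alpha_{0} X} + \alpha_{0}\int_{0}^{1} e^{\alpha_{0} u (X+Y)} Y e^{\alpha_{0}(1-u)X}\, du.
\end{equation*}
Substituting this back into $R_{m-1}$ produces two terms: the first one, after identifying $\alpha_{0}$ as a new simplex coordinate, contributes the next summand $\int_{\mathcal{T}_{m}} e^{\alpha_{0}X}Y\cdots Y e^{\alpha_{m}X}\,d\alpha$ of the expansion; the second one is the new remainder at level $m$.

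The one step that needs care is the change of variables turning this second piece into $R_{m}$. Setting $\beta_{0} = \alpha_{0} u$ and $\beta_{1} = \alpha_{0}(1-u)$, one checks that $(\beta_{0},\beta_{1},\alpha_{1},\dots,\alpha_{m})$ ranges over $\mathcal{T}_{m+1}$ (since $\beta_{0}+\beta_{1} = \alpha_{0}$ and $\beta_{0}+\beta_{1}+\alpha_{1}+\cdots+\alpha_{m}=1$) and that the Jacobian is precisely $\alpha_{0}$, cancelling the prefactor produced by Duhamel. This yields
\begin{equation*}
R_{m} = \int_{\mathcal{T}_{m+1}} e^{\beta_{0}(X+Y)} Y e^{\beta_{1}X} Y e^{\alpha_{1}X}\cdots Y e^{\alpha_{m}X}\, d\beta\, d\alpha,
\end{equation*}
completing the inductive step. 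The first equality of the lemma is proved verbatim by the symmetric argument, applying Duhamel to the rightmost factor $e^{\alpha_{m+1}(X+Y)}$ at each step instead. The only technical obstacle worth flagging is this simplex Jacobian bookkeeping; everything else is routine since $X, Y$ are bounded, so the operator-valued integrals converge absolutely and all manipulations (differentiation under the integral, Fubini, change of variables) are justified in the operator norm.
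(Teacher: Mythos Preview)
Your proof is correct and follows essentially the same approach as the paper: both iterate the Duhamel identity, peeling off one factor of $Y$ at each step to build the simplex expansion. The only cosmetic difference is that the paper applies the scaled Duhamel identity $e^{t(X+Y)}-e^{tX}=\int_0^t e^{\alpha X}Ye^{(t-\alpha)(X+Y)}\,\rmd\alpha$ directly (so the simplex coordinates appear immediately and no Jacobian bookkeeping is needed), whereas you first rescale to $[0,1]$ and then perform the change of variables $(\alpha_0,u)\mapsto(\beta_0,\beta_1)$ explicitly.
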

\begin{proof}
We   start by noting that for any $t \in [0,1]$, 
\begin{align}\label{derivation-iteration}
    e^{t(X+Y)}-e^{tX}
&= \int_0^t \frac{\rmd}{\rmd \alpha}\big(e^{(t-\alpha) X} e^{\alpha (X+Y)}\big) \rmd \alpha
= \int_0^t e^{\alpha X} Y e^{(t-\alpha)(X+Y)} \rmd \alpha,
\end{align}
which yields 
\begin{align*}
    e^{X+Y}-e^{X}
&= \int_0^1 e^{\alpha X} Y e^{(1-\alpha)(X+Y)} \rmd \alpha
\\ &= \int_{\mathcal{T}_1} e^{\alpha_0 X} Y  e^{\alpha_1 X} \rmd \alpha  +\int_{\mathcal{T}_1} e^{\alpha_0 X} Y \big(e^{\alpha_1(X+Y)} - e^{\alpha_1 X}\big) \rmd \alpha .
\end{align*}
We end the proof of the first identity using \eqref{derivation-iteration} and iterating $m$ times the above identity. The second follows in the same way.
\end{proof}
For any $X \in \cA$, we shall denote by  $\bm{X} \in M_2(\cA)$  its Hermitization  matrix  defined by
\begin{equation}\label{Hermitization matrix}
    \bm{X}=\begin{bmatrix}
    0 & X \\ X^\star & 0
\end{bmatrix}.
\end{equation}

\begin{lemma}\label{Prop Fourier cosine}
Let $X$ be a bounded operator. Set $|X|= (XX^{\star})^{\frac{1}{2}}$ and let $\bm{X}$ be its Hermitization matrix defined in \eqref{Hermitization matrix}. Then for any $u\in \mathbb{R}$, the following relation holds in $\mathcal{M}_2(\mathcal{A})$:
\[
e^{{\sf i}u \bm{X}
} = \begin{bmatrix} \cos(u|X|) & {\sf i}u X^{\star}{\rm sinc}(u|X|) \\ {\sf i}u X\ {\rm sinc}(u|X^{\star}|)&\cos(u|X^{\star}|) \end{bmatrix}.
\]
In particular, 
\begin{equation*}
(\tr_2 \otimes \varphi)(e^{{\sf i}u \bm{X}}) = \varphi(\cos(u|X|)).
\end{equation*}
    
\end{lemma}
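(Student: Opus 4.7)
The plan is to establish the identity by a direct power-series computation of $e^{{\sf i}u\bm{X}}$, exploiting the block antidiagonal structure of $\bm{X}$, and then to deduce the trace identity via the traciality of $\varphi$.

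First, I would compute the powers of $\bm{X}$. A direct multiplication gives
$$\bm{X}^{2} = \begin{bmatrix} XX^{\star} & 0 \\ 0 & X^{\star}X \end{bmatrix} = \begin{bmatrix} |X|^{2} & 0 \\ 0 & |X^{\star}|^{2} \end{bmatrix},$$
so by induction $\bm{X}^{2k} = \mathrm{diag}(|X|^{2k},|X^{\star}|^{2k})$, while the odd powers are purely off-diagonal:
$$\bm{X}^{2k+1} = \begin{bmatrix} 0 & |X|^{2k}X \\ |X^{\star}|^{2k}X^{\star} & 0 \end{bmatrix}.$$
Here I would use the elementary intertwining relation $|X|^{2k}X = X|X^{\star}|^{2k}$ (which iterates the identity $XX^{\star}\cdot X = X\cdot X^{\star}X$) in order to freely move the factor $X$ (resp.\ $X^\star$) across the $\mathrm{sinc}$ that will appear.

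Second, I would split the exponential series $e^{{\sf i}u\bm{X}} = \sum_{n\geq 0}\frac{({\sf i}u)^{n}}{n!}\bm{X}^{n}$ into even and odd parts. The even part yields
$$\sum_{k\geq 0}\frac{(-u^{2})^{k}}{(2k)!}\bm{X}^{2k} = \begin{bmatrix} \cos(u|X|) & 0 \\ 0 & \cos(u|X^{\star}|) \end{bmatrix}$$
by the Taylor series of cosine applied through the functional calculus to $|X|$ and $|X^{\star}|$. The odd part becomes
$${\sf i}u\sum_{k\geq 0}\frac{(-u^{2})^{k}}{(2k+1)!}\bm{X}^{2k+1},$$
and the $(1,2)$ and $(2,1)$ entries are recognized respectively as ${\sf i}u X\,\mathrm{sinc}(u|X^{\star}|)$ and ${\sf i}u X^{\star}\,\mathrm{sinc}(u|X|)$ via the series $\mathrm{sinc}(t)=\sum_{k\geq 0}(-1)^{k}t^{2k}/(2k+1)!$ and the intertwining relation above. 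Summing the two contributions gives the claimed matrix identity.

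Third, for the trace formula I would apply $\tr_{2}\otimes\varphi$: the off-diagonal part vanishes, leaving $\tfrac{1}{2}\bigl(\varphi(\cos(u|X|))+\varphi(\cos(u|X^{\star}|))\bigr)$. By traciality of $\varphi$, the operators $XX^{\star}$ and $X^{\star}X$ share the same $\varphi$-distribution, so $\varphi(\cos(u|X^{\star}|)) = \varphi(\cos(u|X|))$, which concludes the proof. No step poses a real obstacle: the computation is entirely routine, and the only point requiring minor care is the bookkeeping of the commutation of $X$ with functions of $|X|$ vs.\ $|X^{\star}|$, which is handled uniformly by the intertwining identity from step one.
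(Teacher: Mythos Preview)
Your proposal is correct and follows essentially the same route as the paper: both expand $e^{{\sf i}u\bm X}$ as a power series, separate even and odd powers, identify the even part with the cosine via functional calculus and the odd part with $\mathrm{sinc}$, and finish the trace identity by noting $\varphi(\cos(u|X|))=\varphi(\cos(u|X^{\star}|))$. Your write-up is slightly more explicit than the paper's in spelling out the intertwining $|X|^{2k}X=X|X^{\star}|^{2k}$ needed to pull $X$ across the $\mathrm{sinc}$, and in attributing the final equality to traciality of $\varphi$; note also that your computed off-diagonal entries are the transpose of those displayed in the statement, which is consistent with your (correct) formula for $\bm X^{2k+1}$.
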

\begin{proof}
For any $u \in \mathbb{R}$, we have that
\begin{align*}
 e^{{\sf i}u\bm{X}}&= \sum_{\ell \geq 0}\frac{{\sf i}^\ell u^\ell}{\ell !}\bm{X}^\ell = \sum_{\ell \geq 0}\frac{(-1)^{\ell} u^{2\ell}}{(2\ell) !}\bm{X}^{2\ell} + {\sf i}\sum_{\ell \geq 0}\frac{(-1)^{\ell} u^{2\ell+1}}{(2\ell +1) !}\bm{X}^{2\ell +1}
 \\ &= \sum_{\ell \geq 0}\frac{(-1)^{\ell} u^{2\ell}}{(2\ell) !} \begin{bmatrix} |X|^{2\ell} &0 \\ 0 & |X^\star|^{2\ell} \end{bmatrix} + {\sf i}u\sum_{\ell \geq 0}\frac{(-1)^{\ell} u^{2\ell}}{(2\ell +1) !}\begin{bmatrix} 0 & X^{\star} |X|^{2\ell}\\ X |X^{\star}|^{2\ell} & 0 \end{bmatrix}
 \\ &  = \begin{bmatrix} \cos(u|X|) & {\sf i}u X^{\star}{\rm sinc}(u|X|) \\ {\sf i}u X \ {\rm sinc}(u|X^{\star}|)&\cos(u|X^{\star}|) \end{bmatrix}.
\end{align*}
Note that $\varphi(\cos(u|X|)) = \varphi(cos(u|X^{\star}|))$, we infer the second part of the proof.
\end{proof}

Finally, we recall the following inequality that we used to get uniform bounds for the estimates obtained under the Lindeberg method.
\begin{lemma}\label{thm:NC L^p-inequality}
Let $(\cA,\varphi)$ be a tracial probability space  and $X_1,\dots,X_n$ be  freely independent bounded variables such that $\varphi(X_i)\neq 0$ for any $i=1,\dots,n$. The partial product $\pi_{i,n}^{X} = X_1\cdots X_i$ then satisfies
$$
\|\pi_{i,n}^X\|_{L^p_{\varphi}}\leq \frac{1}{|\varphi(X_{i+1})\cdots \varphi(X_n)|} \|\pi_{n,n}^X\|_{L^p_{\varphi}}. 
$$
\end{lemma}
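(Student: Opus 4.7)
The plan is to exploit the trace-preserving conditional expectation onto the subalgebra generated by the first $i$ variables together with the contractivity of conditional expectations on noncommutative $L^p$-spaces.

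First I would set $R := X_{i+1} \cdots X_n$ and write $\pi_{n,n}^X = \pi_{i,n}^X \cdot R$. By freeness of the family $(X_j)_{j=1}^n$, the element $R$ lies in the subalgebra generated by $X_{i+1}, \ldots, X_n$, which is free from the subalgebra $\mathcal{A}_i := \mathrm{alg}(X_1, \ldots, X_i)$ containing $\pi_{i,n}^X$. A short computation then gives $\varphi(R) = \varphi(X_{i+1}) \cdots \varphi(X_n)$: decomposing each $X_j = \varphi(X_j) + Y_j$ with $Y_j$ centered, expanding the product, and observing that by the freeness axiom any alternating product of centered free variables from distinct subalgebras has vanishing trace, only the purely scalar term survives.

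Next I would invoke the trace-preserving conditional expectation $E := E_{\mathcal{A}_i} : \mathcal{A} \to \mathcal{A}_i$. By the $\mathcal{A}_i$-bimodule property, $E(\pi_{i,n}^X \cdot R) = \pi_{i,n}^X \cdot E(R)$, and by the standard fact that $E$ sends elements of any subalgebra free from $\mathcal{A}_i$ to their scalar trace, $E(R) = \varphi(R) \cdot 1$. Hence
$$
E(\pi_{n,n}^X) = \varphi(X_{i+1}) \cdots \varphi(X_n) \cdot \pi_{i,n}^X.
$$
Since $E$ is contractive on $L^p_{\varphi}$ for every $p \geq 1$, we obtain
$$
|\varphi(X_{i+1}) \cdots \varphi(X_n)| \cdot \|\pi_{i,n}^X\|_{L^p_{\varphi}} = \|E(\pi_{n,n}^X)\|_{L^p_{\varphi}} \leq \|\pi_{n,n}^X\|_{L^p_{\varphi}},
$$
and dividing by the (nonzero, by hypothesis) scalar $|\varphi(X_{i+1}) \cdots \varphi(X_n)|$ yields the desired inequality.

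There is no real obstacle here beyond ensuring the standard machinery is available. The only points requiring care are the factorization $\varphi(R) = \prod_{j>i} \varphi(X_j)$, which rests entirely on the definition of freeness, and the $L^p$-contractivity of the conditional expectation, which is a classical result of Umegaki in the tracial setting; implicitly one passes to the GNS representation so that $\mathcal{A}$ sits inside a tracial von Neumann algebra where $E_{\mathcal{A}_i}$ is well defined.
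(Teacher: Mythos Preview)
Your proof is correct and follows essentially the same approach as the paper: both arguments apply the trace-preserving conditional expectation onto the von Neumann algebra generated by $X_1,\ldots,X_i$, use freeness to identify $E(\pi_{n,n}^X)$ as $\varphi(X_{i+1})\cdots\varphi(X_n)\,\pi_{i,n}^X$, and conclude via the $L^p$-contractivity of $E$. The only cosmetic difference is that the paper normalizes each factor by its expectation before applying $E$, whereas you compute $E(R)$ directly; the content is identical.
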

\begin{proof}
Let $\mathcal{F}_{i}$ be the von Neumann algebra generated by $X_1,\dots,X_i$. Noting that 
\begin{align*}
\frac{X_1}{\varphi(X_1)}\cdots \frac{X_{i}}{\varphi(X_{i})} = \varphi\Big(\frac{X_1}{\varphi(X_1)}\cdots \frac{X_{n}}{\varphi(X_{n})}\Big|\mathcal{F}_{i}\Big),
\end{align*}
we obtain by the fact that the conditional expectation $\varphi (\cdot|\mathcal{F}_i)$ is a contraction on $L^p_{\varphi}$ that
\begin{align*}
\Big\|\frac{\pi_{i,n}^X}{\varphi(X_1)\cdots \varphi(X_i)}\Big\|_{L^p_{\varphi}}= \Big\|\varphi \Big(\frac{\pi_{n,n}^X}{\varphi(X_1)\cdots \varphi(X_n)}\Big| \mathcal{F}_i\Big)\Big\|_{L^p_{\varphi}}
\leq\Big\|\frac{\pi_{n,n}^X}{\varphi(X_1)\cdots \varphi(X_n)}\Big\|_{L^p_{\varphi}}.
\end{align*}
By simplifying the above expression, we obtain the desired result.
\end{proof}
\subsection{Proofs of Lemmas \ref{lem:unboundedlipschitz} and \ref{lemma:UnbddZo}}
\label{sec:estimatefouriermoments}
\begin{proof}[Proof of Lemma \ref{lem:unboundedlipschitz}]
First, note that since $\|f\|_1\leq 1$ and $f(0)=0$, then for any $x,y\in \mathbb{R}$, $|f(x)|\leq |x|$,
\begin{align}
\label{eqn:lip}
|f_{\zeta}(x)-f_{\zeta}(x+y)| &\leq |f(x)-f(x+y)||\theta(\zeta (x+y))| + |\theta(\zeta x)-\theta(\zeta(x+y))||f(x)| \nonumber \\
& \leq |y|(1+\zeta |x|).
\end{align}
This implies, for any $x\in\mathbb{R}$:
\begin{align*}
|f_{\zeta} - f_{\zeta}\star\rho_{\varepsilon}(x)| \leq 
\int_{\mathbb{R}}|f_{\zeta}(x) - f_{\zeta}(x+y)||\rho_{\varepsilon}(y)| \mathrm{d}y \leq \varepsilon(1+\zeta |x|)\int_{\mathbb{R}}|y|\rho(y)\mathrm{d}y.
\end{align*}
Moreover, for any $x\in \mathbb{R}$: 
\begin{align*}
 |f(x) -f_{\zeta}(x)| &=
|f(x)||(1-\theta_{\zeta}(x))| = |f(x)||(\theta_{\zeta}(0)-\theta_{\zeta}(x))|
\leq \zeta  |x|^2
\end{align*}
and finally, for any $y \in \mathbb{R}$, by using the estimate \eqref{eqn:lip} for $|x|\leq \frac{1}{\zeta}+ \frac{\pi}{|y|}$ and $y := \frac{\pi}{|y|}$:
\begin{align}
\label{eqn:riemanlebesgueslipschitz}
|\hat{f_{\zeta}}(y)| = \frac{1}{2}|\widehat{f_{\zeta}-(f_{\zeta})_{\frac{\pi}{y}}}(y)| &\leq \frac{1}{2}\|f_{\zeta}-(f_{\zeta})_{\frac{\pi}{y}} \|_{L^{1}_{[-\frac{1}{\zeta}-\frac{\pi}{|y|},\frac{1}{\zeta}+\frac{\pi}{|y|}]}} \nonumber\\ 
&\lesssim \bigg(\frac{\pi}{|y|} + \frac{1}{\zeta}\bigg)\bigg(1 + {\zeta}\bigg(\frac{\pi}{|y|} + \frac{1}{\zeta}\bigg)\bigg)  \frac{\pi}{|y|}.
\end{align}
To estimate $I_k$, we first assume $k \geq 3$ and use the above inequality to obtain
\begin{align*}
I_k &\leq\int_{\mathbb{R}} |y|^k|\hat{\rho}_\varepsilon(y)\hat{f_{\zeta}}(y)| \mathrm{d} y =\frac{1}{\varepsilon^{k+1}}\int_{\mathbb{R}} |u|^k|\hat{\rho}(u)\hat{f_{\zeta}}(\frac{u}{\varepsilon})| \mathrm{d} y \nonumber  \\ 
&\lesssim \frac{1}{\varepsilon^{k}}\int_{\mathbb{R}}|u|^{k-1}|\hat{\rho}(u)| \bigg(\frac{\varepsilon}{|u|} + \frac{1}{\zeta}\bigg)\bigg(1 + {\zeta}\bigg(\frac{\varepsilon}{|u|} + \frac{1}{\zeta}\bigg)\bigg) \mathrm{d}u =O\Big(\frac{1}{\varepsilon^{k}\zeta}\Big), 
\end{align*}
since  $\rho$ is smooth with rapidly decaying derivatives. We note that $O(\frac{1}{\varepsilon^{k}\zeta})$ is uniform on Lipschitz functions $f$ with $\|f\|_1 \leq 1$ and depends only on the $k-3, k-2, k-1 \geq 0$ absolute moments of $\hat{\rho}$. We consider now the cases when $ k < 3$ and write as before:
\begin{equation}
I_k\leq \int_{\mathbb{R}} |y|^k|\hat{\rho}_\varepsilon(y)\hat{f_{\zeta}}(y)| \mathrm{d} y  = \frac{1}{\varepsilon^{k+1}}\int_{|u|\leq \varepsilon} |u|^k|\hat{\rho}(u)\hat{f_{\zeta}}(\frac{u}{\varepsilon})| \mathrm{d} u + \frac{1}{\varepsilon^{k+1}}\int_{|u|\geq \varepsilon} |u|^k|\hat{\rho}(u)\hat{f_{\zeta}}(\frac{u}{\varepsilon})| \mathrm{d} u. 
\end{equation}
The first integral is bounded as,
\begin{align}
\label{eqn:firstintegral}
\frac{1}{\varepsilon^{k+1}}\int_{|u|\leq \varepsilon} |u|^k|\hat{\rho}(u)\hat{f_{\zeta}}(\frac{u}{\varepsilon})| \mathrm{d} u \lesssim  \|f_{\zeta}\|_{L^1} \lesssim \frac{1}{\zeta^2}, 
\end{align}
since $|f_{\zeta}(x)| \leq |x|$.
For $k=2$, we obtain using the bound in \eqref{eqn:riemanlebesgueslipschitz} that
\begin{align*}
    \frac{1}{\varepsilon^3}\int_{|u|\geq \varepsilon} |u|^2|\hat{\rho}(u)\hat{f_{\zeta}}(\frac{u}{\varepsilon})| \mathrm{d} u &\lesssim \frac{1}{\varepsilon^2}\int_{|u|\geq \varepsilon} |u||\hat{\rho}(u)|\bigg(\frac{\varepsilon}{|u|} + \frac{1}{\zeta}\bigg)\bigg(1 + {\zeta}\bigg(\frac{\varepsilon}{|u|} + \frac{1}{\zeta}\bigg)\bigg) \mathrm{d} u .
\end{align*}
Expanding the right-hand side of the last inequality, we obtain a sum of terms. After calculating each integral, using the fact that $\hat{\rho}$ is a Schwartz function, we find that the leading order is $O(\varepsilon^{-2}\zeta^{-1})$. For $k=1$, we obtain using the bound in \eqref{eqn:riemanlebesgueslipschitz} that
\begin{align*}
    \frac{1}{\varepsilon^2}\int_{|u|\geq \varepsilon} |u||\hat{\rho}(u)\hat{f_{\zeta}}(\frac{u}{\varepsilon})| \mathrm{d} u &\lesssim \frac{1}{\varepsilon}\int_{|u|\geq \varepsilon}|\hat{\rho}(u)|\bigg(\frac{\varepsilon}{|u|} + \frac{1}{\zeta}\bigg)\bigg(1 + {\zeta}\bigg(\frac{\varepsilon}{|u|} + \frac{1}{\zeta}\bigg)\bigg) \mathrm{d} u .
\end{align*}
We expand the right-hand side of the above equality to obtain a sum of terms; each is $O(\frac{1}{\varepsilon\zeta})$. The estimates of the proposition are obtained after taking into account \eqref{eqn:firstintegral}.
\end{proof}

\begin{proof}[Proof of Lemma \ref{lemma:UnbddZo}]
Let   $\theta$ and $\theta_\zeta$ be defined as in Lemma \ref{lem:unboundedlipschitz}. Then,
\begin{align*}
& |(f-f_{\zeta})(x)| = |(1-\theta_{\zeta}(x))f(x)| = |(\theta_{\zeta}(0)-\theta_{\zeta}(x))f(x)| \leq \zeta |x| |x|^r = \zeta |x|^{r+1}. \\
\end{align*}
To proceed, we first give an upper bound on $|f_{\zeta}^{(\ell)}(z) - f_{\zeta}^{(\ell)}(x)|$. For this, we recall that by Leibniz rule applied to $f_{\zeta}^{(\ell)}$, we have
\begin{align}
f_{\zeta}^{(\ell)}=(\theta_{\zeta }f)^{(\ell)} = \sum_{k=0}^{\ell}\binom{\ell}{k} \theta_{\zeta}^{(k)}f^{(\ell-k)},
\end{align}
  which yields for any $|x|,|z| \leq R $ with $R \geq \frac{1}{\zeta}$ and $\zeta \leq 1$, that
\begin{align*}
|&f_{\zeta}^{(\ell)}(z) - f_{\zeta}^{(\ell)}(x)| \\&\leq \sum_{k=0}^{\ell}\binom{\ell}{k} |\theta_{\zeta}^{(k)}(x)||f^{(\ell-k)}(z)-f^{(\ell-k)}(x)| + \sum_{k=0}^{\ell}\binom{\ell}{k}|\theta^{(k)}_{\zeta}(x)-\theta^{(k)}_{\zeta}(z)||f^{(\ell-k)}(z)| 
\\&\leq \sum_{k=0}^{\ell}\binom{\ell}{k}\zeta^{k}|f^{(\ell-k)}(z)-f^{(\ell-k)}(x)| + \sum_{k=0}^{\ell}\binom{\ell}{k}|\theta^{(k)}_{\zeta}(x)-\theta^{(k)}_{\zeta}(z)||f^{(\ell-k)}(z)|
\\
&\lesssim |x-z|^{\beta} + \sum_{k=1}^{\ell}\binom{\ell}{k}\zeta^{k}|x-z| \sup_{-R\leq u\leq R} |f^{{(\ell-k+1)}}(u)| + \sum_{k=0}^{\ell}\binom{\ell}{k}\zeta^{k} |x-z||f^{(\ell-k)}(z)|,
\end{align*}
where the second and third inequalities follow by the fact that $|\theta_{\zeta}^{(k)}(x)| = |\zeta^k \theta^{(k)}(x)| \leq \zeta^k $.
To control the derivatives of $f$, we first note that $f$ and all its derivatives up to the order $\ell$ vanish at $0$. Then by Taylor-Lagrange, we obtain for some $\theta_x$ between $0$ and $x$ 
\begin{align*}
   \big| f^{(\ell - k)}(x) \big| &= \big|  f^{(\ell -k)}(0) + f^{(\ell - k + 1)}(0)x + \cdots + \frac{1}{(k-1)!}f^{(\ell - 1)}(0)x^{k-1} + \frac{1}{k!}f^{(\ell)}(\theta_x)x^{k}\big|
   \\ &= \frac{1}{k!} |f^{(\ell)} (\theta_x) - f^{(\ell)} (0)| |x|^{k} 
   \\& \leq \frac{1}{k!}  |x|^{r} ,
   \end{align*}
where we recall that $r=k+\beta$. Hence, using the above bound we obtain for any $|x|,|z| \leq R $
\begin{align}\label{est: l derivative feta}
|f_{\zeta}^{(\ell)}(z) - f_{\zeta}^{(\ell)}(x)| 
&\lesssim |x-z|^{\beta} + \sum_{k=1}^{\ell}\binom{\ell}{k}\zeta^{k}|x-z|\frac{R^{r-\ell+k-1}}{(k-1)!}  + \sum_{k=0}^{\ell}\binom{\ell}{k}\zeta^{k} |x-z|\frac{R^{r-\ell+k}}{k!} \nonumber\\
&\lesssim |x-z|^{\beta} + \sum_{k=1}^{\ell}\binom{\ell}{k}(R\zeta)^{k}|x-z|\frac{R^{r-\ell-1}}{(k-1)!}  + \sum_{k=0}^{\ell}\binom{\ell}{k}(R\zeta)^{k} |x-z|\frac{R^{r-\ell}}{k!} \nonumber \\
&\lesssim (|x-z|^{\beta} \wedge 1)(|x-z| \vee 1)(1+R^{r}\zeta^{\ell}),
\end{align}
where we have used the fact that $R \geq \frac{1}{\zeta}$ and $r\geq 1$ to write the last inequality. 
Noting that $\hat{\rho}^{(k)}(0)=0$ for $k \leq \ell$ and $\hat{\rho}(0)=1$, then 
$$
\int_{\mathbb{R}} \rho(y) \mathrm{d}y = 1,\quad \int_{\mathbb{R}} y^{k}\rho(y) \mathrm{d}y = 0,\quad 1 \leq k \leq \ell,
$$
which by the steps to get \eqref{est: l derivative feta} and Taylor-Lagrange formula yield that
\begin{align*}
|&f_{\zeta}(x)-f_{\zeta} \star \rho_{\varepsilon}(x)| \\&= \Big|\int_{\mathbb{R}} (f_{\zeta}(x) - f_{\zeta}(x+y)) \rho_{\varepsilon}(y){\rm d}y \Big| \\ 
&= \Big|\int_{\mathbb{R}} \big(f_{\zeta}(x) - f_{\zeta}(x+y) + f_\zeta^{\prime}(x)y + \cdots + \frac{f_\zeta^{(\ell)}(x)}{\ell!}y^{\ell}\big){\rho}_{\varepsilon}(y){\rm d}y\Big| \\
&\leq \int_{\mathbb{R}} |f_{\zeta}^{(\ell)}(\xi_{x,y}) -f_{\zeta}^{(\ell)}(x)||y|^{\ell}|{\rho}_{\varepsilon}(y)| {\rm d}y \quad \text{for some  } \xi_{x,y} \text{  between } x \text{  and  } x+y\\
&\lesssim \int_{\mathbb{R}} \sup_{z\in[x,x+y]}|f_{\zeta}^{(\ell)}(z) -f_{\zeta}^{(\ell)}(x)||y|^{\ell} |{\rho}_{\varepsilon}(y)|{\rm d}y \\
&\lesssim \int_{\mathbb{R}} (|y|^{\beta} \wedge 1)(|y| \vee 1)(1+(|y|^r \vee |x+y|^{r})\zeta^{\ell}) |y|^{\ell}|{\rho}_{\varepsilon}(y)| {\rm d}y.
\end{align*}
To estimate the above integral, we consider the domains corresponding to $|y| \geq 1$ and $|y| \leq 1$. For the former case,
\begin{align*}
\int_{|y|\geq 1} (|y|^{\beta}  \wedge 1)&(|y| \vee 1)(1+ (|y|^r \vee |x+y|^{r})\zeta^{\ell}) |y|^{\ell}|{\rho}_{\varepsilon}(y)| {\rm d}y \\ & \lesssim \int_{|y|\geq 1} |y| (1+|x|^r+|y|^{r}\zeta^{\ell})|{\rho}_{\varepsilon}(y)| {\rm d}y 
\\&= \int_{|u|\geq \varepsilon^{-1}} \varepsilon|u| (1+|x|^r+|\varepsilon u|^{r}\zeta^{\ell})\varepsilon^{\ell}|u^{\ell}||{\rho}(u)| {\rm d}u \\
&={\varepsilon^{\ell+1}} \int_{|u|\geq \varepsilon^{-1}}{|u|} (1+|{x}|^r+|{\varepsilon}{u}|^{r}\zeta^{\ell}) |{u^{\ell}}||{\rho}(u)| {\rm d}u \\
&\lesssim {\varepsilon^{\ell+1}} \int_{|u|\geq  \varepsilon^{-1}}{|u|} (1+|{x}|^r+|{u}|^{r})|{\rho}(u)||u|^{\ell} {\rm d}u .
\end{align*}
Since $\rho$ is rapidly decaying, the integral in the right-hand side of the above inequality tends to zero rapidly as $\varepsilon$ goes to $0$,
and the bound is $o(\varepsilon^{\ell+1})$.
As for the second part, 
\begin{align*}
\int_{|y|\leq 1} &(|y|^{\beta} \wedge 1)(|y| \vee 1)(1+|x+y|^{r}\zeta^{\ell})|{\rho}_{\varepsilon}(y)| {\rm d}y \\&\lesssim 
\int_{|u|\leq \varepsilon^{-1}}\varepsilon^{\ell+\beta}|u|^{\beta}(1+|x|^r+|\varepsilon u|^{r}\zeta^{\ell})|u|^{\ell}|{\rho}(u)| {\rm d}y \\
&\leq \varepsilon^{r}\int_{|u|>0}|u|^{\beta}(1+|x|^r+|u|^{r})|u|^{\ell} |{\rho}(u)| {\rm d}y.
\end{align*}
Putting the above bounds together, we obtain \eqref{est:f-feta Unbdd Z}. 
Now by choosing $R = \frac{1}{\zeta} + \frac{\pi}{|y|}$ in the estimate \eqref{est: l derivative feta}, we infer
\begin{multline*}
|\widehat{f_{\zeta}^{(\ell)}}(y)| = 
\frac{1}{2}|\widehat{f_{\zeta}^{(\ell)}}(y)-(\widehat{f_{\zeta}^{(\ell)})_{\frac{\pi}{y}}}(y)|
\lesssim \|f_{\zeta}^{(\ell)}-(f_{\zeta}^{(\ell)})_{\frac{\pi}{y}}\|_{L^1([-\frac{1}{\zeta}-\frac{\pi}{|y|},\frac{1}{\zeta}+\frac{\pi}{|y|}])} \\ 
\lesssim \big(\frac{1}{|y|^{\beta}} \wedge 1\big)\big(\frac{1}{|y|} \vee 1\big) \Big(1+\big(\frac{1}{\zeta} + \frac{1}{|y|}\big)^{r}\zeta^{\ell}\Big)\big(\frac{1}{\zeta} + \frac{1}{|y|}\big).
\end{multline*}
Since for any $y\in \mathbb{R}$, $|\widehat{f^{(\ell)}}(y)| = |y^{\ell}\hat{f}(y)|$, we infer that 
\begin{equation}\label{est: l-derivative.FT.UNbddZolotarev}
|\widehat{f_{\zeta}}(y)| \lesssim \big(\frac{1}{|y|^{\beta}} \wedge 1\big)\big(\frac{1}{|y|} \vee 1\big) \Big(1+\big(\frac{1}{\zeta} + \frac{1}{|y|}\big)^{r}\zeta^{\ell}\Big)\big(\frac{1}{\zeta} + \frac{1}{|y|}\big)\frac{1}{|y|^{\ell}}.
\end{equation}
To estimate the integral $I_k$, we cut it into two parts, corresponding to the domains $|y|\leq 1$ and $|y|\geq 1$,
\begin{align*}
I_k=\int_{|y|\leq 1} |y|^k |\hat{\rho}_\varepsilon(y)\hat{f}_{\zeta}(y)| \mathrm{d} y + \int_{|y|\geq 1} |y|^k |\hat{\rho}_\varepsilon(y)\hat{f}_{\zeta}(y)| \mathrm{d} y,
\end{align*}
and, after a change of variables,  we obtain for the former

$$
\frac{1}{\varepsilon^{k+1}}\int_{|u|\leq \varepsilon} |u|^k |\hat{\rho}(u)|\hat{f}(\frac{u}{\varepsilon})| \mathrm{d} u \leq \|\hat{f}_{\zeta}\|_{\infty} \|\hat{\rho} \| \lesssim \| f_{\zeta}\|_{L^1} \leq \int_{-\frac{1}{\zeta}}^{\frac{1}{\zeta}} |\theta_\zeta(x)||f(x)| \mathrm{d} x \lesssim \frac{1}{\zeta^{r+1}}.
$$
As for the other integral,  we  use the bound in \eqref{est: l-derivative.FT.UNbddZolotarev} and get
\begin{align*}
\frac{1}{\varepsilon^{k+1}} &
\int_{|u|\geq \varepsilon} |u|^k |\hat{\rho}(u)|\hat{f}_{\zeta}(\frac{u}{\varepsilon})| \mathrm{d} u
\\ &\leq \frac{1}{\varepsilon^{k-\ell+1}}\int_{|u|\geq \varepsilon} |u|^{k-\ell} |\hat{\rho}(u)| \big(\frac{\varepsilon^{\beta}}{|u|^{\beta}} \wedge 1 \big)\big(\frac{\varepsilon}{|u|} \vee 1\big)\Big(1+\big(\frac{1}{\zeta} + \frac{\varepsilon}{|u|}\big)^{r}\zeta^{\ell}\Big)\big(\frac{1}{\zeta} + \frac{\varepsilon}{|u|}\big) \ \mathrm{d} u \\
&  = \frac{1}{\varepsilon^{k-\ell+1}}\int_{|u|\geq \varepsilon} |u|^{k-\ell} |\hat{\rho}(u)|\frac{\varepsilon^{\beta}}{|u|^{\beta}} \Big(1+\big(\frac{1}{\zeta} + \frac{\varepsilon}{|u|}\big)^{r}\zeta^{\ell}\Big)\big(\frac{1}{\zeta} + \frac{\varepsilon}{|u|}\big) \ \mathrm{d}  u\\
& \lesssim  \frac{1}{\varepsilon^{k-r+1}}\int_{|u|\geq \varepsilon} |u|^{k-r} |\hat{\rho}(u)|\Big(1+\big(\frac{1}{\zeta} + \frac{\varepsilon}{|u|}\big)^{r}\zeta^{\ell}\Big)\big(\frac{1}{\zeta} + \frac{\varepsilon}{|u|}\big) \ \mathrm{d}  u\\
&  \lesssim \frac{1}{\varepsilon^{k-r+1}}\int_{|u|\geq \varepsilon} |u|^{k-r} |\hat{\rho}(u)|\Big(1+\frac{1}{\zeta^{\beta}} + \frac{\zeta^{\ell}\varepsilon^{r}}{|u|^{r}}\Big)\big(\frac{1}{\zeta} + \frac{\varepsilon}{|u|}\big) \ \mathrm{d}  u \\
&  \lesssim \frac{1}{{\zeta^{\beta+1}}}\frac{1}{\varepsilon^{k-r+1}}\int_{|u|\geq \varepsilon} |u|^{k-r} |\hat{\rho}(u)|\big(1 + \frac{\varepsilon^{r}}{|u|^{r}}\big)\big(1+ \frac{\varepsilon}{|u|}\big) \ \mathrm{d}  u.
\end{align*}
From the above integral, we identify the critical values of $k=r-1$, $k=r$, $k=2r-1$ and $k=2r$. To determine the order of $I_k$, we distinguish all possible cases. Whenever $k > 2r$, the integral in the right-hand side converges as $\epsilon$ goes to zero, and thus $I_k = O({{\zeta^{-(\beta+1)}}\varepsilon^{-(k-r+1)}})$. On the other hand, for  $k \leq 2r$, we have to sort out the dominant term as $\varepsilon$ goes to $0$ among the four following integrals:
$$
I_k^1 = \frac{1}{\varepsilon^{k-r+1}}\int_{|u|\geq \varepsilon} |u|^{k-r} |\hat{\rho}(u)| \mathrm{d} u,\quad I_k^2=  \frac{1}{\varepsilon^{k-2r+1}}\int_{|u|\geq \varepsilon} |u|^{k-2r} |\hat{\rho}(u)| \mathrm{d} u, $$
$$
I_k^3 = \frac{1}{\varepsilon^{k-r}}\int_{|u|\geq \varepsilon} |u|^{k-r-1} |\hat{\rho}(u)| \mathrm{d} u, 
\quad I_k^4 = \frac{1}{\varepsilon^{k-2r}}\int_{|u|\geq \varepsilon} |u|^{k-2r-1} |\hat{\rho}(u)| \mathrm{d} u .
 $$
$\bullet$\, For $k = 2r$: It is easy to see that the integrals in $I_k^1$, $I_k^2$ and $I_k^3$ are respectively of order $O({\varepsilon^{-(r+1)}})$, $O({\varepsilon^{-1}}) $, and $O({\varepsilon^{-r}})$. As for the last integral, we use similar arguments to show that $I_k^4 = O(|\ln(\varepsilon)|)$.  Hence, we infer that 
$$I_{2r} = O\Big(\frac{1}{\zeta^{\beta+1}\varepsilon^{r+1}}\Big).
$$
$\bullet$\, For  $ 2r-1 < k  < 2r$: One has $k  - r >  r - 1 > 0 $ and consequently it follows that $
I^1_k = O({\varepsilon^{-(k-r+1)}} )$. On the other hand, as $0<k-2r<0$ and $k-r-1>0$, it follows using the fact that $\hat{\rho}$ is a Schwartz function that $I^2_k = O({\varepsilon^{-(k-2r+1)}})$ and $I^3_k = O({\varepsilon^{-(k-r)}})$. Finally, noting that $\|\hat{\rho}\|\leq \|\rho\|_{L^1} <\infty$, we infer that $I^4_k =  O(1)$. Consequently, we obtain that 
$$
I_k = O\Big(\frac{1}{\zeta^{\beta+1}\varepsilon^{k+1-r}}\Big).$$
Using similar arguments, one can prove the following for the remaining cases:

\noindent
$\bullet$\, For  $k=2r-1 $: 
$$
I^1_{2r-1} =  O\big(\frac{1}{\varepsilon^{r}} \big),\;  I^2_{2r-1} =  O(|\ln(\varepsilon)|),\; I_{2r-1}^3 = O\big(\frac{1}{\varepsilon^{r-1}}\big), \; I^4_{2r-1}= O(\varepsilon), $$ and hence  $I_{2r-1} = O\Big(\frac{1}{\zeta^{\beta+1}\varepsilon^{r}}\Big).$

\noindent
$\bullet$\, For  $r<k<2r-1 $: 
$$
I^1_{k} =  O\big(\frac{1}{\varepsilon^{k-r+1}} \big),\; 
I^2_{k} =  O(\frac{1}{\varepsilon^{k-2r+1}}),\; I_{k}^3 = O\big(\frac{1}{\varepsilon^{k-r}}\big), \; I^4_{k}= O(\frac{1}{\varepsilon^{k-2r}}),$$ and hence $I_{2r-1} =  O\Big(\frac{1}{\zeta^{\beta+1}\varepsilon^{k-r+1}}\Big).
$

\noindent
$\bullet$\, For  $k=r$: 
$$
I^1_{r} =  O\big(\frac{1}{\varepsilon} \big),\;  I^2_{r} =  O(\varepsilon^{r-1}),\; I_{r}^3 = O\big(|\ln(\varepsilon)|\big), \; I^4_{r}= O(\varepsilon^r),$$  and hence $I_{r} = O\Big(\frac{1}{\zeta^{\beta+1}\varepsilon}\Big).
$

\noindent
$\bullet$\, For  $ r-1<k <r $: 
$$
I^1_{k} =  O\big(\frac{1}{\varepsilon^{k-r+1}} \big),\;  I^2_{k} =  O\big(\frac{1}{\varepsilon^{k-2r+1}} \big),\; I_{k}^3 = O\big(\frac{1}{\varepsilon^{k-r}}\big), \; I^4_{k}= O\big(\frac{1}{\varepsilon^{k-2r}} \big), $$ and hence  $I_{k} = O\Big(\frac{1}{\zeta^{\beta+1}\varepsilon^{k-r+1}}\Big).
$

\noindent
$\bullet$\, For  $k=r-1$:
$$
I^1_{r-1} =  O (|\ln(\varepsilon)|),\;  I^2_{r-1} =  O (1),\; I_{r-1}^3 = O\big(1\big), \; I^4_{r-1}= O\big(1),$$ and hence $I_{r-1} = O\Big(\frac{|\ln(\varepsilon)|}{\zeta^{\beta+1}}\Big).
$

\noindent
$\bullet$\, For  $k < r-1$: all the integrals are of order $1$ when $\varepsilon$ goes to $0$, hence 
$$I_k=O\Big(\frac{1}{\zeta^{\beta+1}}\Big).$$
Considering the above bounds we obtain the estimates desired. 
\end{proof}

\subsection{A combinatorial proof of the free multiplicative CLT}\label{Appendix:combinatorialProof}
\begin{theorem}\label{thm: moment CLT}
Suppose $(x_i)_{i\geq 1}$ is a sequence of centered self-adjoint random variables with the same variance $\sigma^2$. For a fixed $ k\geq 1$, assume 
\begin{equation}\label{Assumption}
\sup_{i\geq 1} {\varphi(x^{k(1+\zeta)})} < + \infty
\end{equation}
Let $g\colon \mathbb{R}\to \mathbb{C}$ be such that $g \in \C^{2,\gamma}(\mathbb{R})$ and $g(0)=1$. Assume that there exists $0<\alpha<1$ such that
\begin{align*}
    &\sup_{i\geq 1} \| \||g^2|\|_{1, [\alpha x^{-}_i,\alpha x^{+}_i]} \|_{k(1+\zeta^{-1})},\quad \sup_{i\geq 1} \| \||g^2|'\|_{1, [\alpha x^{-}_i,\alpha x^{+}_i]} \|_{k(1+\zeta^{-1})},\\&\sup_{i\geq 1} \| \||g^2|''\|_{\gamma, [\alpha x^{-}_i,\alpha x^{+}_i]} \|_{k(1+\zeta^{-1})} < +\infty,
\end{align*}
for some $\zeta > 0$ and $\alpha > 0$,  where    $x_i^-:=\min(x_i,0)$, $x_i^+:=\max(x_i,0)$ and  $\|.\|_{L_\varphi^p}$ denotes the $L^p$ norm with respect to $\varphi$. Then, 
\begin{align*}
\lim_{n\to\infty} \kappa^{\varphi}_k \big(\pingx(\pingx)^{\star}\big)=\frac{k^{k-1}}{k!}(|g^2|^{\prime}(0)\sigma)^{2(k-1)}e^{\frac{k}{2} |g^2|^{\prime\prime}(0)\sigma^2}.
\end{align*}
\end{theorem}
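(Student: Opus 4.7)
The plan is to proceed combinatorially by first identifying the distribution of $\pi_n^{g,n^{-1/2}x}(\pi_n^{g,n^{-1/2}x})^*$ with a multiplicative free convolution and then expanding each factor asymptotically. By traciality of $\varphi$, the moments $\varphi((g_1 \cdots g_n g_n^* \cdots g_1^*)^k)$ can be rewritten via cyclic permutation as $\varphi((C_{n-1} A_n)^k)$, where $C_{n-1} := g_1 \cdots g_{n-1} g_{n-1}^* \cdots g_1^*$ lies in the $\ast$-subalgebra generated by $x_1, \ldots, x_{n-1}$ and is therefore free from $A_n := |g|^2(x_n/\sqrt{n})$; by induction on $n$ and the definition of free multiplicative convolution of positive measures, the law of $\pi_n (\pi_n)^*$ coincides with $\mu_n := \mu_{A_1} \boxtimes \cdots \boxtimes \mu_{A_n}$.

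Taylor-expanding $|g|^2$ around the origin gives
\[
A_i = 1 + \frac{|g^2|'(0)}{\sqrt{n}}\, x_i + \frac{|g^2|''(0)}{2n}\, x_i^2 + R_i,
\]
with a remainder $R_i$ controlled via the H\"older condition on $(|g|^2)''$ exactly as in Lemma~\ref{Lemma:moment-est}. A direct computation using $\varphi(x_i) = 0$ and $\varphi(x_i^2) = \sigma^2$ yields the free cumulants
\[
\kappa_1(A_i) = 1 + \frac{|g^2|''(0)\sigma^2}{2n} + o(n^{-1}), \quad \kappa_2(A_i) = \frac{|g^2|'(0)^2\sigma^2}{n} + o(n^{-1}), \quad \kappa_m(A_i) = o(n^{-1}) \text{ for } m \geq 3.
\]
Iterating the product-cumulant formula $\kappa_k(XY, \ldots, XY) = \sum_{\pi \in NC(k)} \kappa_\pi[X]\, \kappa_{K(\pi)}[Y]$ (valid for free $X$, $Y$) expresses $\kappa_k(\mu_n)$ as a finite sum over $n$-tuples of non-crossing partitions of $[k]$ linked by Kreweras complementation; in view of the asymptotics above, only tuples whose partitions consist exclusively of singletons and pair-blocks contribute to the leading order as $n \to \infty$.

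The surviving contribution is then identified as follows: the pair-blocks stitch together, via the Kreweras constraints, into a non-crossing tree-like skeleton on $[k]$ whose enumeration produces the Cayley factor $k^{k-1}/k!$ weighted by $(|g^2|'(0)\sigma)^{2(k-1)}$, while the independent singleton decorations at each of the $n$ levels aggregate through the standard $(1 + c/n)^n \to e^c$ mechanism into the exponential $\exp(k|g^2|''(0)\sigma^2/2)$. The main obstacle is the combinatorial identification matching the enumeration of Kreweras-chained non-crossing partitions with the Cayley tree count $k^{k-1}$; this combinatorics can be read off from the exponential form of the $S$-transform of the free positive semi-circular distribution recalled in Section~\ref{Subsection:positive-semicircular}. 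Uniform control of the Taylor remainder across the sum, using the H\"older hypothesis on $(|g|^2)''$ and the uniform $L^{k(1+\zeta)}$ bound on the $x_i$, is routine and parallels the estimates of Appendix~\ref{annex:Appendix}.
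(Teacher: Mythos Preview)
Your overall strategy coincides with the paper's: both reduce $\kappa_k(\pi_n\pi_n^{\star})$ by traciality to $\kappa_k(A_1\cdots A_n)$ with $A_i=|g|^2(n^{-1/2}x_i)$, both derive the same asymptotics $\kappa_1(A_i)=1+\tfrac{1}{2}|g^2|''(0)\sigma^2/n+o(n^{-1})$, $\kappa_2(A_i)=|g^2|'(0)^2\sigma^2/n+o(n^{-1})$, $\kappa_m(A_i)=O(n^{-m/2})$ for $m\ge 3$, and both argue that only configurations built from singletons and pairs survive in the limit.

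The substantive difference, and the gap in your proposal, is the combinatorial enumeration. The paper does \emph{not} iterate the two-variable product formula. It applies directly the $n$-fold formula
\[
\kappa_k(A_1\cdots A_n)=\sum_{\pi\in NC_n(k)}\kappa_{\mathrm{Kr}(\pi)}(A_1,\ldots,A_n),
\]
summing over $k$-equal non-crossing partitions of $[nk]$, and then invokes the exact count from \cite{Arizmendi12},
\[
|NC(n,k)_{2,1}|=\frac{n\,(kn-k)!}{(kn-2k+2)!\,(k-1)!}=\frac{k^{k-1}}{k!}\,n^{k-1}+O(n^{k-2}),
\]
for those $\pi$ whose Kreweras complement has only singletons and pairs. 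Multiplying this count by the per-term contribution $n^{-(k-1)}(|g^2|'(0)\sigma)^{2(k-1)}e^{k|g^2|''(0)\sigma^2/2}$ gives the limit immediately. Your iterated picture---``$n$-tuples of non-crossing partitions of $[k]$ linked by Kreweras complementation''---is not quite the right object (iterating $\kappa_k(XY)=\sum_{\pi}\kappa_\pi(X)\kappa_{K(\pi)}(Y)$ produces nested refinements on the blocks of $K(\pi)$, not a flat $n$-tuple), and in any case you do not enumerate it. Saying the count ``can be read off from the $S$-transform'' is circular: the $S$-transform identity $S_{\mu_{\mathrm{p.s.c}}}(z)=\exp(-4\sigma^2(z+1/2))$ encodes exactly the cumulants you are trying to compute, so invoking it assumes the limit rather than proving it. If instead you mean to run an $S$-transform argument analytically (multiplicativity plus a limit), that is Ho's route, not the combinatorial one you have set up.

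There is a second, related gap: showing that the non-pair/non-singleton contributions vanish is not just ``each term is $o(n^{-1})$''. The number of admissible partitions grows with $n$, and the paper controls this by an explicit multinomial bound on the number of $k$-completing partitions with a prescribed block profile $(b_2,\ldots,b_k)$, summing to $O(n^{-1/2})$. Your proposal does not address this count.
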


\begin{remark}
Note that if we take $g(x)=\exp(x)$, then $|g^2|'(0)=\frac{1}{2}|g^2|''(0)=2$, which yields
$$
\lim_{n\to\infty} \kappa^{\varphi}_k \big(\pingx(\pingx)^{\star}\big)=\frac{k^{k-1}}{k!}(2\sigma)^{2(k-1)}e^{2k\sigma^2},
$$
in agreement with Equation \eqref{eqn: free cumulant of p.s.c} and Ho's result.
\end{remark}

\begin{remark} \label{remark:unbddCLT}
It is worth emphasizing that Theorem \ref{thm: moment CLT}, combined with the results of \cite{ber08, chistyakov08}, establishes the free multiplicative central limit theorem (CLT) under the Wasserstein distance and even extends to the unbounded setting. Specifically, \cite{ber08, chistyakov08} guarantee the existence and convergence of the limit, while Theorem \ref{thm: moment CLT} identifies the corresponding cumulants. In contrast to the approach of Ho, this combinatorial approach applies in the unbounded setting as well and yields convergence in the Wasserstein distance to a broader class of products involving functions of freely independent,   not necessarily identically distributed elements.
\end{remark}
To prove Theorem~\ref{thm: moment CLT}, we begin by controlling the cumulants of $|g|^2(\frac{x_i}{\sqrt{n}})$ for all $i=1,\dots, n$ as illustrated in the following lemma.

\begin{lemma}\label{lem: Multi-CLT}
Under the same assumptions as in Theorem \ref{thm: moment CLT}, we have 
\begin{align*}
   \kappa^{\varphi}_1\Big(|g|^2\big(\frac{x_i}{\sqrt{n}}\big)\Big) &= 1 + \big(\Re(g^{\prime\prime}(0)) + |g^{\prime}(0)|^2\big) \frac{\sigma^2}{n} + O\Big(\frac{1}{n^{1+\frac{\gamma}{2}}}\Big),  \\
   \kappa^{\varphi}_2\Big(|g|^2\Big(\frac{x_i}{\sqrt{n}}\Big)\Big) &= \frac{\sigma^2}{n}|g^2|'(0)^2 +O\Big(\frac{1}{n^{\frac{3}{2}}}\Big), \\
   \kappa^{\varphi}_k\Big(|g|^2\Big(\frac{x_i}{\sqrt{n}}\Big)\Big) &\leq  \frac{4^{k-1}}{n^{k/2}}\big\|   \||g|^2\|_{1,[\alpha x_i^-, \alpha x_i^+]}    \big\|^k_{L_{\varphi}^{k(1+\frac{1}{\zeta})}}\|x_i\|_{L_{\varphi}^{k(1+\zeta)}}^k, \qquad \text{for } k\geq3.
\end{align*}
\end{lemma}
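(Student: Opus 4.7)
Set $h := |g|^2 = g\overline{g}$. Since $g \in \mathcal{C}^{2,\gamma}(\mathbb{R})$ and $g(0) = 1$, we have $h \in \mathcal{C}^{2,\gamma}(\mathbb{R})$ with
$$
h(0) = 1, \qquad h'(0) = 2\Re(g'(0)) = |g^2|'(0), \qquad \tfrac{1}{2}h''(0) = \Re(g''(0)) + |g'(0)|^2 = \tfrac{1}{2}|g^2|''(0),
$$
so that the three stated estimates can be rephrased entirely in terms of $h$. All three will follow from Taylor expansions of $h$ at $0$, evaluated at $x_i/\sqrt{n}$, with remainders controlled by the Hölder continuity of $h''$ on $[\alpha x_i^-, \alpha x_i^+]$. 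The local regularity is what is needed because, for $n \geq \alpha^{-2}$, the spectral values of $x_i/\sqrt{n}$ lie in $[\alpha x_i^-, \alpha x_i^+]$; only then can the mean value / Taylor theorems be applied pointwise to bound $h(x_i/\sqrt n)$ and its increments by the relevant local semi-norms.

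\textbf{First and second cumulants.} For the first cumulant, Taylor--Lagrange with Hölder remainder gives
$$
h\big(x_i/\sqrt{n}\big) = 1 + h'(0)\,\tfrac{x_i}{\sqrt n} + \tfrac{1}{2}h''(0)\tfrac{x_i^2}{n} + R,\qquad |R| \leq \tfrac{1}{2}\|h''\|_{\gamma, [\alpha x_i^-,\alpha x_i^+]}\,\big|x_i/\sqrt n\big|^{2+\gamma}.
$$
Taking $\varphi$, using $\varphi(x_i) = 0$, $\varphi(x_i^2) = \sigma^2$, and bounding $\varphi(|R|)$ by Hölder's inequality with exponents $1+\zeta^{-1}$ and $1+\zeta$, one gets
$$
\varphi(h(x_i/\sqrt n)) = 1 + \tfrac{h''(0)}{2}\tfrac{\sigma^2}{n} + O(n^{-(1+\gamma/2)}),
$$
which is the first claim. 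For the second cumulant, write $Y_{i,n} := h(x_i/\sqrt n) - \varphi(h(x_i/\sqrt n))$. The Taylor expansion above combined with the first cumulant estimate yields
$$
Y_{i,n} = h'(0)\tfrac{x_i}{\sqrt n} + \tfrac{h''(0)}{2}\Big(\tfrac{x_i^2}{n} - \tfrac{\sigma^2}{n}\Big) + \widetilde R,
$$
with $\widetilde R$ controlled as before. Squaring and applying $\varphi$, the leading $O(n^{-1})$ contribution is $h'(0)^2 \varphi(x_i^2)/n = |g^2|'(0)^2 \sigma^2/n$; the cross term involves $\varphi(x_i^3)/n^{3/2}$ which, by the moment hypothesis, is $O(n^{-3/2})$, and all remaining contributions are either $O(n^{-3/2})$ or $O(n^{-(1+\gamma/2)}) \subset O(n^{-3/2})$ (after possibly choosing a smaller $\gamma$). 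This yields the second claim.

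\textbf{Higher cumulants.} For $k\geq 3$, set $X := h(x_i/\sqrt n) - 1$ and apply the moment-cumulant formula
$$
\kappa_k(h(x_i/\sqrt n)) = \kappa_k(X) = \sum_{\pi \in NC(k)} \mu(\pi,1_k)\,\varphi_\pi(X),
$$
using translation invariance of $\kappa_k$ for $k \geq 2$. For any $\pi \in NC(k)$ with blocks of sizes $|V|$, Lyapunov's inequality gives $|\varphi(X^{|V|})| \leq \varphi(|X|^k)^{|V|/k}$, hence $|\varphi_\pi(X)| \leq \varphi(|X|^k)$. The Möbius coefficients on $NC(k)$ are dominated by the Catalan bound, so $\sum_{\pi \in NC(k)}|\mu(\pi,1_k)| \leq 4^{k-1}$. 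It remains to bound $\varphi(|X|^k)$. For $n \geq \alpha^{-2}$, the mean value theorem gives $|X| \leq \|h\|_{1,[\alpha x_i^-, \alpha x_i^+]} \cdot |x_i|/\sqrt n$ in the functional calculus, and Hölder's inequality with exponents $(1+\zeta^{-1}, 1+\zeta)$ yields
$$
\varphi(|X|^k) \leq n^{-k/2}\,\big\|\|h\|_{1, [\alpha x_i^-,\alpha x_i^+]}\big\|_{L^{k(1+\zeta^{-1})}_\varphi}^{k}\,\|x_i\|_{L^{k(1+\zeta)}_\varphi}^{k},
$$
which, combined with the Catalan bound, gives the third estimate.

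\textbf{Main obstacle.} The delicate point is the second cumulant: all three potentially leading contributions to $\varphi(Y_{i,n}^2)$ — the square of the linear term, the square of the quadratic correction, and the cross terms — must be tracked with care, and the assumption on the sixth-order moment of $x_i$ is precisely what ensures that the cross term $\varphi(x_i^3)/n^{3/2}$ and the $\gamma$-Hölder remainders both land in the desired $O(n^{-3/2})$ class. Everything else is a matter of organizing Taylor expansions and applying Hölder's inequality with the right pair of conjugate exponents.
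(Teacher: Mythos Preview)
Your proof is correct and follows essentially the same route as the paper: Taylor--Lagrange with $\gamma$-H\"older remainder for $\kappa_1$, moment--cumulant formula plus the bound $\sum_{\pi}|\mu(\pi,1_k)|\le 4^{k-1}$ and H\"older's inequality for $\kappa_k$ with $k\ge 3$.

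One point deserves correction. For $\kappa_2$ the paper organizes the computation slightly differently: it expands $|g|^2(x_i/\sqrt n)-1$ around the \emph{first}-order Taylor polynomial $\mathcal T_1^{|g|^2}$, so the remainder is controlled by the Lipschitz seminorm of $|g^2|'$ (order $|t|^2/n$) rather than by the $\gamma$-H\"older seminorm of $|g^2|''$. This avoids any $\gamma$-dependence in the $\kappa_2$ bound. Your second-order expansion also works, but the sentence ``$O(n^{-(1+\gamma/2)})\subset O(n^{-3/2})$ (after possibly choosing a smaller $\gamma$)'' is false: for $\gamma<1$ the inclusion goes the wrong way, and shrinking $\gamma$ only makes it worse. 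What actually saves you is that in $\varphi(Y_{i,n}^2)$ every occurrence of the H\"older remainder $\widetilde R$ is multiplied by at least one further factor of $n^{-1/2}$ (the cross term with $h'(0)x_i/\sqrt n$ gives $O(n^{-(3+\gamma)/2})$, the others are even smaller), so no standalone $n^{-(1+\gamma/2)}$ term survives. With that correction your argument is complete.
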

\begin{proof}
Let $\mathcal{T}^{|g|^2}_p$  denote the $p$-th order Taylor polynomial of $|g|^2$ around the origin. Then, for any $i=1,\dots,n$,   $\mathcal{T}^{|g|^2}_2(\frac{x_i}{\sqrt{n}})$ is given  by
\[
\mathcal{T}^{|g|^2}_2\Big(\frac{x_i}{\sqrt{n}}\Big) = |g(0)^2| +|g(0)^2| ' \frac{x_i}{\sqrt{n}}+ \frac{1}{2} |g^2|^{\prime\prime}(0) \frac{x^2_i}{n}.\] 
We first note that by the Taylor-Lagrange theorem, we have \begin{align*}
\Big|\varphi\Big(|g|^2 \Big(\frac{x_i}{\sqrt{n}}\Big) - \mathcal{T}^{|g|^2}_2 \Big(\frac{x_i}{\sqrt{n}}\Big)\Big)\Big| 
&\leq \frac{1}{2}\int_{\mathbb{R}}\Big||g^2|''\Big(\frac{\xi}{\sqrt{n}}\Big) - |g^2|''(0) \Big| \frac{|t|^2}{n} \nu_{x_i}(\rmd t), 
\end{align*}
for some $\xi$ between $0$ and $t$. 
Using the fact that $g^{\prime\prime}$ is $\gamma$-H\"older continuous then applying H\"older's inequality, we deduce that for any $\zeta \geq 0$,
\begin{align*}
\Big|\varphi\Big(|g|^2 \Big(\frac{x_i}{\sqrt{n}}\Big) - \mathcal{T}^{|g|^2}_2 \Big(\frac{x_i}{\sqrt{n}}\Big)\Big)\Big| &\leq \frac{1}{n^{1+\frac{\gamma}{2}}}\int_{\mathbb{R}} \| |g^2|^{\prime\prime}\|_{ \gamma,[\frac{t^-}{\sqrt{n}},\frac{t^+}{\sqrt{n}}]} |t|^{2+\gamma}\nu_{x_i}(\rmd t) \\
&\leq 
\frac{1}{n^{1+\frac{\gamma}{2}}}\Big\| \| |g^2|^{\prime\prime}\|_{ \gamma, [\frac{x_i^-}{\sqrt{n}},\frac{x_i^+}{\sqrt{n}}]} \Big\|_{L_{\varphi}^{1+\frac{1}{\zeta}}}\|x_i\|_{L_{\varphi}^{(2+\gamma)(1+\zeta)}}^{2+\gamma},
\end{align*}
 which is of order $n^{-(1+\gamma/2)}$ under  Assumption \eqref{Assumption}. On the other hand, we note, by the fact that $\varphi(x_i)=0$ and $g(0)=1$, that
\begin{align*}
\varphi\Big(\mathcal{T}^{|g|^2}_2\Big(\frac{x_i}{\sqrt{n}}\Big)\Big) &= |g(0)|^2 + \frac{1}{2} |g^2|^{\prime\prime}(0) \varphi \Big(\frac{x^2_i}{n}\Big) \\
&= |g(0)|^2 + \big(\Re(g^{\prime\prime}(0)\bar{g}(0)) + |g^{\prime}(0)|^2\big) \frac{\sigma^2}{n} 
= 1 + \big(\Re(g^{\prime\prime}(0)) + |g^{\prime}(0)|^2\big) \frac{\sigma^2}{n}.
\end{align*}
Hence, we conclude from the above computations and bounds that
$$
\kappa^{\varphi}_1\Big(|g|^2\big(\frac{x_i}{\sqrt{n}}\big)\Big) = \varphi\Big(|g|^2\Big(\frac{x_i}{\sqrt{n}}\Big)\Big) = 1 + \big(\Re(g^{\prime\prime}(0)) + |g^{\prime}(0)|^2\big) \frac{\sigma^2}{n} + O\Big(\frac{1}{n^{1+\frac{\gamma}{2}}}\Big).
$$
For the cumulant of second order, we first write
\begin{align*}\label{X cumulant order 2}
    \kappa^{\varphi}_2&\Big(|g|^2\Big(\frac{x_i}{\sqrt{n}}\Big)\Big) 
    \\&=\varphi\Big(\Big(|g|^2\big(\frac{x_i}{\sqrt{n}}\big)-1\Big)^2\Big) - \varphi\Big(|g|^2\big(\frac{x_i}{\sqrt{n}}\big)-1\Big)^2  \\
    &= \varphi\Big(\Big(|g|^2\big(\frac{x_i}{\sqrt{n}}\big)-\mathcal{T}^{|g|^2}_{1}\big(\frac{x_i}{\sqrt{n}}\big) + |g^2|^{\prime}(0)\frac{x_i}{\sqrt{n}}\Big)^2\Big) - \varphi\Big(|g|^2\big(\frac{x_i}{\sqrt{n}}\big)-\mathcal{T}^{|g|^2}_1\big(\frac{x_i}{\sqrt{n}}\big)\Big)^2.
\end{align*}
where the last equation holds because $x_i$ is centered with respect to $\varphi$. Now, by Taylor-Lagrange theorem, we note that
\begin{align*}
\varphi\Big(|g|^2\big(\frac{x_i}{\sqrt{n}}\big)-\mathcal{T}^{|g|^2}_1\big(\frac{x_i}{\sqrt{n}}\big)\Big) 
&= \frac{1}{n}\int_{\mathbb{R}} \||g^2|'\|_{ 1, [\frac{t_i^-}{\sqrt{n}},\frac{t_i^+}{\sqrt{n}}]} |t|^{2}\nu_{x_i}({\rmd t})\\&\leq  \frac{1}{n} \Big\| \| |g^2|^{\prime}\big\|_{ 1, [\frac{x_i^-}{\sqrt{n}},\frac{x_i^+}{\sqrt{n}}]} \Big\|_{L_{\varphi}^{1+\frac{1}{\zeta}}}\|x_i\|_{L_{\varphi}^{2(1+\zeta)}}^{2},  
\end{align*}
where the last inequality follows for any choice of $\zeta \geq 0$ by applying H\"older's inequality.
As for the second-order moment, we first write
\begin{align*}
&\varphi\Big(\big(|g|^2\big(\frac{x_i}{\sqrt{n}}\big)-1\big)^2\Big)\\& = 
\varphi\Big(\Big(|g|^2\big(\frac{x_i}{\sqrt{n}}\big)-\mathcal{T}^{|g|^2}_{1}\big(\frac{x_i}{\sqrt{n}}\big) + |g^2|^{\prime}(0)\frac{x_i}{\sqrt{n}}\Big)^2\Big) \\&=\varphi \Big( \Big(|g|^2\big(\frac{x_i}{\sqrt{n}}\big)-\mathcal{T}^{|g|^2}_{1}\big(\frac{x_i}{\sqrt{n}}\big)\Big)^2\Big) + 2|g^2|'(0)\varphi\Big(\Big(|g|^2\big(\frac{x_i}{\sqrt{n}}\big)-\mathcal{T}^{|g|^2}_{1}\big(\frac{x_i}{\sqrt{n}}\big)\Big)\frac{x_i}{\sqrt{n}}\Big) + \frac{\sigma^2}{n}|g^2|'(0)^2.
\end{align*}
We now show that the first two terms on the right-hand side above are each of order $O(n^{-3/2})$. To begin, we observe  by Taylor-Lagrange theorem that
\begin{align*}
\varphi &\Big( \Big(|g|^2\big(\frac{x_i}{\sqrt{n}}\big)-\mathcal{T}^{|g|^2}_{1}\big(\frac{x_i}{\sqrt{n}}\big)\Big)^2\Big)\\&\leq
\frac{1}{n^2}\int_{\mathbb{R}} \||g^2|'\|_{1,[\frac{t^-}{\sqrt{n}},\frac{t^+}{\sqrt{n}}]}^2 |t|^{4}\nu_{x_i}(\rmd t) \leq \frac{1}{n^2}\big\| \| |g^2|^{\prime}\|_{ 1, [\frac{x_i^-}{\sqrt{n}},\frac{x_i^+}{\sqrt{n}}]} \Big\|_{L_{\varphi}^{1+\frac{1}{\zeta}}}^2\|x_i\|_{L_{\varphi}^{2(1+\zeta)}}^{4},
\end{align*}
where the last inequality follows again for any $\zeta\geq 0$ by applying H\"older's inequality. Following similar arguments, we also show that 
$$
\varphi\Big(\Big(|g|^2\big(\frac{x_i}{\sqrt{n}}\big)-\mathcal{T}^{|g|^2}_{1}\big(\frac{x_i}{\sqrt{n}}\big)\Big)\frac{x_i}{\sqrt{n}}\Big) \leq \frac{1}{n^{\frac{3}{2}}}\Big\| \| |g^2|^{\prime}\|_{1,[\frac{x_i^-}{\sqrt{n}},\frac{x_i^+}{\sqrt{n}}]} \Big\|_{L_{\varphi}^{1+\frac{1}{\zeta}}}\|x_i\|_{L_{\varphi}^{3(1+\zeta)}}^{3}.
$$
We conclude that
$$
\kappa^{\varphi}_2\Big(|g|^2\big(\frac{x_i}{\sqrt{n}}\big)\Big) = \frac{\sigma^2}{n}|g^2|'(0)^2 +O\Big(\frac{1}{n^{\frac{3}{2}}}\Big).
$$
Note that for $k\geq 3$, we have by the cumulant-moment formula that $$\kappa^{\varphi}_k\Big(|g|^2\big(\frac{x_i}{\sqrt{n}}\big)\Big)=\kappa^{\varphi}_k\Big(|g|^2\big(\frac{x_i}{\sqrt{n}}\big)-1\Big)=\sum_{\pi\in NC(k)}\mu(\pi,1_k)\varphi_{\pi}\Big(|g|^2\big(\frac{x_i}{\sqrt{n}}\big)-1,\dots,|g|^2\big(\frac{x_i}{\sqrt{n}}\big)-1\Big),$$
where $\mu$ is the M\"{o}bius function on the poset of non-crossing partitions. 
Observe that for any $3\leq p \leq k$, we have by  Taylor-Lagrange theorem
$$
\Big|\varphi\big((|g|^2(\frac{x_i}{\sqrt{n}})-1)^p\big)\Big| 
\leq \int_{\mathbb{R}} \big| |g|^2(\frac{t}{\sqrt{n}})-1\big|^p\nu_{x_i}(\rmd t)
\leq \frac{1}{n^{\frac{p}{2}}} \int_{\mathbb{R}}\| |g|^2\|_{1,[\frac{t^-}{\sqrt{n}},\frac{t^+}{\sqrt{n}}]}^p |t|^p \nu_{x_i}(\rmd t),
$$
where $\nu_{x_i}$ is the spectral measure of $x_i$. Then, by Hölder's inequality, we obtain for any $\xi \geq 0$,   
\begin{align*}
\int_{\mathbb{R}}\big| |g(\frac{t}{\sqrt{n}})|^2 - 1\big|^p \nu_{x_i}( \rmd t) &
\leq \frac{1}{n^{\frac{p}{2}}}\Big\| \||g|^2\|^p_{1,[\frac{x_i^-}{\sqrt{n}},\frac{x_i^+}{\sqrt{n}}]} \Big\|_{L^{1+\frac{1}{\zeta}}_\varphi} \varphi\big(|x_i|^{p(1+\zeta)}\big)^{\frac{1}{1+\zeta}} 
\\
&\leq \frac{1}{n^{\frac{p}{2}}} \Big\|   \||g|^2\|_{1,[\frac{x_i^-}{\sqrt{n}},\frac{x_i^+}{\sqrt{n}}]}  \Big\|^p_{L^{k(1+\frac{1}{\zeta})}_\varphi} \|x_i\|^p_{L^{k(1+\zeta)}_\varphi}.
\end{align*}
The proof is completed by applying the cumulant-moment formula together with the estimate $\mu(\cdot, 1_k) \leq 4^{k-1}$, see \cite{NicaSpeicher2006}.
\end{proof}
To prove Theorem \ref{thm: moment CLT}, we introduce some notation and recall relevant terminology from \cite{Arizmendi12}. Let $n$ and $k$ be positive integers, and define
$$
\rho_n^k:=\{(1,\dots,n),(n+1,\dots,2n),\dots,((n-1)k+1,\dots,nk)\}.
$$
A partition $\pi\in NC(kn)$ is said to be \emph{$k$-equal} if all its blocks have size $k$. The set of $k$-equal partitions of $[kn]$ is denoted $NC_n(k)$. A partition $\pi\in NC(kn)$ is termed \emph{$k$-completing} if each block of $\pi$ comprises only numbers with same congruence modulo $k$, and $\pi\vee \rho_{n}^k=1_{nk}:=\{(1,2,\dots,nk-1,nk)\}$ where
$NC(kn)$ is equipped with a natural lattice structure.  $\pi\in NC_n(k)$ if and only if ${\rm Kr}(\pi)$ is $k$-completing. Moreover, the free cumulants of the product of free random variables $a_1,\dots,a_n \in (\mathcal{A},\varphi)$ satisfy, for each integer $k$, 
$$
\kappa^{\varphi}_k(a_1\cdots a_n)=\sum_{\pi\in NC_n(k)}\kappa^{\varphi}_{{\rm Kr}(\pi)}(a_1,\dots,a_n). 
$$
In the following, $NC(n,k)_{2,1}$ will denote the set of $k$-equal partitions $\pi\in NC_n(k)$ with Kreweras complement ${\rm Kr}(\pi)$ consisting of only pairings and singletons. 
\begin{proof}[Proof of Theorem \ref{thm: moment CLT}]
Since the state $\varphi$ is tracial, 
$\kappa^{\varphi}_k(\pingx(\pingx)^\star)=\kappa^{\varphi}_k(Y_n)$ where $Y_n=|g|^2(\frac{x_1}{\sqrt{n}})\cdots |g|^2(\frac{x_n}{\sqrt{n}})$. 
Therefore, we write
\begin{align*}
\kappa^{\varphi}_k(Y_n)&=\sum_{\pi\in NC_n(k)}\kappa^{\varphi}_{{\rm Kr}(\pi)}(|g|^2(\frac{x_1}{\sqrt{n}}),\dots,|g|^2(\frac{x_n}{\sqrt{n}})) \\
&=\sum_{\pi\in NC(n,k)_{2,1}}\kappa^{\varphi}_{{\rm Kr}(\pi)}\big(|g|^2(\frac{x_1}{\sqrt{n}}),\dots,|g|^2(\frac{x_n}{\sqrt{n}})\big)\\& \qquad+\sum_{\pi\in NC_n(k)\setminus NC(n,k)_{2,1}}\kappa^{\varphi}_{{\rm Kr}(\pi)}\big(|g|^2(\frac{x_1}{\sqrt{n}}),\dots,|g|^2(\frac{x_n}{\sqrt{n}})\big),   
\end{align*}
where we recall $NC(n,k)_{2,1}$ is the set of $k$-equal partitions on $[nk]$ for which ${\rm Kr}(\pi)$ has only pairings and singletons.
We will show 
$$
\lim_{n\to\infty} S_1(n,k)=\frac{1}{k!} k^{k-1}\big(|g^2|^{\prime}(0)^2\sigma^2\big)^{k-1}e^{\frac{k}{2}|g^2|^{\prime\prime}(0)\sigma^2}  \quad\text{and} \quad\lim_{n\to\infty}S_2(n,k)=0,
$$
where
\begin{align*}
S_1(n,k)&= \sum_{\pi\in NC(n,k)_{2,1}}\kappa^{\varphi}_{{\rm Kr}(\pi)}\big(|g|^2(\frac{x_1}{\sqrt{n}}),\dots,|g|^2(\frac{x_n}{\sqrt{n}})\big)  \text{  and } \\
S_2(n,k)&= \sum_{\pi\in NC_n(k)\setminus NC(n,k)_{2,1}}\kappa^{\varphi}_{{\rm Kr}(\pi)}\big(|g|^2(\frac{x_1}{\sqrt{n}}),\dots,|g|^2(\frac{x_n}{\sqrt{n}})\big).
\end{align*}
Observe that for each partition $\pi\in NC(n,k)_{2,1}$,
$$\kappa^{\varphi}_{{\rm Kr}(\pi)}\big(|g|^2(\frac{x_1}{\sqrt{n}}),\dots,|g|^2(\frac{x_n}{\sqrt{n}})\big)=\prod_{s=1}^{k-1}\kappa^{\varphi}_2\big(|g|^2(\frac{x_{i_s}}{\sqrt{n}})\big)\cdot \prod_{r=1}^{kn-2(k-1)}\kappa^{\varphi}_1\big(|g|^2(\frac{x_{j_r}}{\sqrt{n}})\big) $$
for some $i_1,\dots,i_{k-1}\in [n]$ and $j_1,\dots,j_{kn-2(k-1)}\in [n]$.  Lemma \ref{lem: Multi-CLT} then yields
\begin{align*}
&\kappa^{\varphi}_{{\rm Kr}(\pi)}\big(|g|^2(\frac{x_1}{\sqrt{n}}),\dots,|g|^2(\frac{x_n}{\sqrt{n}})\big)\\
&=\Big(\frac{1}{n}|g^2|^{\prime}(0)^2\sigma^2+{ O}(\frac{1}{n^{\frac{3}{2}}})\Big)^{k-1}\Big(1 + \big(\Re(g^{\prime\prime}(0)) + |g^{\prime}(0)|^2\big) \frac{\sigma^2}{n} + O(\frac{1}{n^{1+\frac{\gamma}{2}}})\Big)^{kn-2(k-1)} \\
&=\Big(\frac{1}{n}|g^2|^{\prime}(0)^2\sigma^2+{ O}(\frac{1}{n^{\frac{3}{2}}})\Big)^{k-1}\Big(1 + \frac{1}{2}|g^2|''(0)\frac{\sigma^2}{n} + O(\frac{1}{n^{1+\frac{\gamma}{2}}})\Big)^{kn-2(k-1)} \\
&= \frac{1}{n^{k-1}} (|g^2|^{\prime}(0)^2\sigma^2)^{k-1}e^{\frac{k}{2}|g^2|^{\prime\prime}(0)\sigma^2 } + O(\frac{1}{n^{\frac{3}{2}(k-1)}}).
\end{align*}
The cardinality of $NC(k,n)_{2,1}$ has been found in \cite [Eq. (2.5)]{Arizmendi12}  to be equal to 
$$
|NC(n,k)_{2,1}|= \frac{n(kn-k)!}{(kn-k-(k-2))!(k-1)!} =\frac{1}{k!}k^{k-1}n^{k-1}+O(n^{k-2}), 
$$
which implies 
\begin{align*}
|NC(k,n)_{2,1}|\frac{1}{n^{k-1}} (|g^2|^{\prime}(0)^2\sigma^2)^{k-1}e^{\frac{k}{2}|g^2|^{\prime\prime}(0)\sigma^2 } = \frac{1}{k!} k^{k-1}\big(|g^2|^{\prime}(0)^2\sigma^2\big)^{k-1}e^{\frac{k}{2}|g^2|^{\prime\prime}(0)\sigma^2} + O({n^{-\frac{1}{2}(k-1)}}).
\end{align*}
We infer finally 
$$
\lim_{n\to\infty} S_1(n,k) =\frac{1}{k!} k^{k-1}\big(|g^2|^{\prime}(0)^2\sigma^2\big)^{k-1}e^{\frac{k}{2}|g^2|^{\prime\prime}(0)\sigma^2}.
$$
Now, we are left with proving that:
$ \lim_{n\to\infty}S_2(n,k)=0.$
For each partition $\pi\in NC_n(k)\setminus NC(n,k)_{2,1},$ there exists $m\leq k-1$ and $2\leq k_1,\dots,k_m\leq k$ with $k_j\geq 3$ for some $j$ such that
$$
\kappa^{\varphi}_{kr(\pi)}(X_{1,n},\dots,X_{n,n})=\kappa^{\varphi}_{k_1}\big(|g|^2(\frac{x_{i_1}}{\sqrt{n}})\big)\cdots \kappa^{\varphi}_{k_m}\big(|g|^2(\frac{x_{i_m}}{\sqrt{n}})\big)\prod_{r=1}^{kn-(k_1+\cdots+k_m)}\kappa^{\varphi}_1\big(|g|^2(\frac{x_{j_r}}{\sqrt{n}})\big),
$$
for some $i_1,\dots,i_m,j_1,\dots,j_{kn-(k_1+\dots+k_m)}\in [n]$. 
Applying Lemma \ref{lem: Multi-CLT}, one gets
\begin{equation*}
\kappa^{\varphi}_{{\rm Kr}(\pi)}\big(|g|^2(\frac{x_1}{\sqrt{n}}),\dots,|g|^2(\frac{x_n}{\sqrt{n}})\big)= O\big(n^{-\frac{1}{2}(k_1+\cdots +k_m)}\big).
\end{equation*}
The number of $n$-completing non-crossing partitions with type $(b_2,\ldots,b_k)$ (omitting the number of singletons) is equal to  
$$
n\frac{\big((n-1)k\big)!}{b_1!\cdots b_k!}.
$$
The number of singletons is minimal for pair or singleton partitions and is equal to 
$
nk-2(k-1)
$
and maximal for partitions with only one block of size $k$ and is equal to $nk - k$. Then, 
\begin{align}\label{eqn: estimate of S(n,k)}
S_2(n,k) &\lesssim n\sum_{\substack{k\geq b_j \geq 0, \,\exists  j\geq 3,\, b_j \geq 1, \nonumber \\ b_2+\cdots + b_k = 1+q-k, \\ 2b_2 + \cdots + kb_k = q}} \frac{1}{n^{\frac{2b_2+\cdots+kb_k}{2}}}\frac{\big((n-1)k\big)!}{\big(nk-(2b_2+\cdots+kb_k)\big)! b_2!\cdots b_k!} \nonumber \\%
&=\sum_{q=k}^{2(k-1)-1} \frac{1}{n^{\frac{q}{2}-1}}\frac{\big((n-1)k\big)!}{(nk-q)!}\sum_{\substack{\substack{k\geq b_j \geq 0, \,\exists  j\geq 3,\, b_j \geq 1, \\ b_2+\cdots + b_k = 1+q-k, \\ 2b_2 + \cdots +kb_k = q}}} \frac{1}{ b_2!\cdots b_k!} \nonumber \\%
&=\sum_{q=k}^{2(k-1)-1} \frac{1}{n^{\frac{q}{2}-1}}\frac{\big((n-1)k\big)!}{(nk-q)!}\frac{1}{(1+q-k)!}\sum_{\substack{\substack{k\geq b_j \geq 0, \,\exists  j\geq 3,\, b_j \geq 1, \\ b_2+\cdots + b_k = 1+q-k, \\ 2b_2 + \cdots +kb_k = q}}} \binom{1+q-k}{b_2,\ldots,b_k} \nonumber \\
&\leq 
\sum_{q=k}^{2(k-1)-1} \frac{1}{n^{\frac{q}{2}-1}}\frac{(k-1)^{1+q-k}}{(1+q-k)!}\frac{\big((n-1)k\big)!}{(nk-q)!} 
\end{align}
where the last inequality holds by noting that
$$
(k-1)^{1+q-k} = \sum_{\substack{b_2,\dots,b_k\geq 0, \\ b_2+\cdots + b_k= 1+q-k}} \binom{1+q-k}{b_2,\ldots,b_k}\geq  \sum_{\substack{\substack{k\geq b_j \geq 0, \,\exists  j\geq 3,\, b_j \geq 1, \\ b_2+\cdots + b_k = 1+q-k, \\ 2b_2 + \cdots +kb_k = q}}} \binom{1+q-k}{b_2,\ldots,b_k}.
$$
By substituting $s=q-k$, we can write 
\begin{align*}
\eqref{eqn: estimate of S(n,k)} 
&= \sum_{s=0}^{k-3} \frac{1}{n^{\frac{s+k}{2}-1}} \frac{(k-1)^{1+s}}{(1+s)!}\frac{\big((n-1)k\big)!}{\big((n-1)k-s\big)!} \leq \sum_{s=0}^{k-3} \frac{1}{n^{\frac{s+k}{2}-1}} \frac{(k-1)^{1+s}}{(1+s)!}{(nk)^s} 
\\
&= \sum_{s=0}^{k-3} \frac{1}{n^{\frac{k-s}{2}-1}}  \frac{k^s(k-1)^{1+s}}{(1+s)!}  \leq \frac{1}{n^{\frac{1}{2}}}\sum_{s=0}^k \frac{k^{2s+1}}{s!}\leq \frac{k}{n^{\frac{1}{2}}}e^{k^2}.
\end{align*}
\end{proof}


\bibliographystyle{alpha}
\bibliography{AOP/AOP}

\end{document}